\newtheorem{theo}{Theorem}[section]
\newtheorem{cor}[theo]{Corollary}
\newtheorem{lem}[theo]{Lemma}
\newtheorem{prop}[theo]{Proposition}
\theoremstyle{definition}
\newtheorem{defn}[theo]{Definition}
\theoremstyle{remark}
\newtheorem{rmk}[theo]{Remark}
\newtheorem{egg}[theo]{Example}
\numberwithin{equation}{section}
\newcommand{\Hom}{\mathrm{Hom}}
\newcommand{\id}{\mathrm{id}}
\newcommand{\B}{\mathcal{B}}
\newcommand{\I}{\mathbf{I}}
\DeclareMathOperator{\diag}{diag}
\begin{document}
		\title[\(C^*\)-diagonal of inductive limit of \(1\)-dimensional NCCW complexes]{\(C^*\)-diagonal of inductive limit of \(1\)-dimensional NCCW complexes}
		\author{Dolapo Oyetunbi}
		
		\address{Department of Mathematics and Statistics\\
		University of Windsor\\
		Canada}
		\email{Oyetunb@uwindsor.ca}
		
		\begin{abstract}This paper establishes the existence of a $C^*$-diagonal in the inductive limit of 1-dimensional NCCW complexes with trivial $K_1$-groups. It also examines some limitations and implications of approximating $^*$-homomorphisms between two such complexes.
		\end{abstract}
		\maketitle
	\section{Introduction}
1-dimensional noncommutative CW-complexes (NCCW complexes) are a crucial class of \(C^*\)-algebras that extend $1$-dimensional classical CW-complexes into the non-commutative world. Key characteristics of these \(C^*\)-algebras include being finitely generated \cite[Lemma 2.3]{Eilers98} and semiprojective \cite[Lemma 2.3]{Eilers98}, which simplifies their study and analysis.
Due to their well-understood structure, they serve as useful building blocks in the classification of more complex \(C^*\)-algebras.


Renault and Kumjian\cite{kum86, Ren08} introduced the concept of \( C^* \)-diagonals within \( C^* \)-algebras, following the foundational work of Feldman and Moore \cite{FM77} on Cartan subalgebras of von Neumann algebras. They showed that \( C^* \)-algebras with \( C^* \)-diagonals are realizable as \( C^* \)-algebras of well-behaved topological groupoids.

A significant amount of work has been devoted to identifying \( C^* \)-diagonals within \( C^* \)-algebras. For instance, an important result by an Huef et al. \cite{AKS11} shows that the unique extension property of an abelian \( C^* \)-subalgebra of a Fell algebra—and in particular, of a \( C^* \)-algebra with continuous trace—is both a necessary and sufficient condition for the subalgebra to be a \( C^* \)-diagonal. Recently, Li and Raad \cite{Li+Raad23} established the existence of \( C^* \)-diagonals in AH-algebras, which include certain non-simple and non-classifiable \( C^* \)-algebras. 
We generalize certain special cases from the framework of Li and Raad and establish the existence of \(C^*\)-diagonals in new classes of C-algebras, including some non-simple C*-algebras, which lie beyond the scope of their results to prove the following:

\begin{theo}[see Theorem \ref{thm:Mainresult}]\label{thm:Main}
	Every unital inductive limit of one-dimensional NCCW complexes with trivial \( K_1 \)-group and unital injective connecting maps has a \( C^* \)-diagonal.
\end{theo}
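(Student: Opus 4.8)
The plan is to realise \(A\) as an inductive limit whose building blocks carry \(C^*\)-diagonals and whose connecting maps respect them, and then to transport the diagonal to the limit. Write \(A=\varinjlim(A_i,\psi_i)\) with each \(A_i\) a one-dimensional NCCW complex with \(K_1(A_i)=0\) and each \(\psi_i\colon A_i\to A_{i+1}\) unital and injective. Present \(A_i\) as a pullback
\[
A_i=\bigl\{(f,a)\in (C[0,1]\otimes E_i)\oplus F_i:\ f(0)=\phi^i_0(a),\ f(1)=\phi^i_1(a)\bigr\}
\]
with \(E_i,F_i\) finite dimensional and \(\phi^i_0\oplus\phi^i_1\) injective, and let \(D_{E_i}\subseteq E_i\), \(D_{F_i}\subseteq F_i\) be the standard diagonal MASAs. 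Conjugating the presentation by a unitary \(w\in U(C[0,1]\otimes E_i)\) I may arrange \(\phi^i_0(D_{F_i})\subseteq D_{E_i}\) and \(\phi^i_1(D_{F_i})\subseteq D_{E_i}\) simultaneously: a unitary of \(E_i\) diagonalising \(\phi^i_0\) and one diagonalising \(\phi^i_1\) can be joined by a path in \(U(E_i)\) because \(U(E_i)\) is connected, and that path is \(w(\cdot)\). With \(B_i=\{(f,a)\in A_i:\ f\in C[0,1]\otimes D_{E_i},\ a\in D_{F_i}\}\), a fibrewise check shows \(B_i\) is a MASA of \(A_i\) (at interior points \(D_{E_i}\) is a MASA of \(E_i\); at an endpoint \(\phi^i_j(D_{F_i})\) is a MASA of the fibre \(\phi^i_j(F_i)\)), the fibrewise diagonal expectations assemble to a faithful conditional expectation \(A_i\to B_i\), \(B_i\) is regular (its normalisers include the fibrewise permutation unitaries, cut down near an endpoint by a scalar function vanishing there), and the unique extension property holds, reducing point by point on the spectrum \(\Omega_i\) of \(B_i\) to the fact that the standard diagonal of a finite-dimensional \(C^*\)-algebra has that property. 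Thus each \(B_i\) is a \(C^*\)-diagonal of \(A_i\) and \(\Omega_i\) is a finite graph; the triviality of \(K_1(A_i)\) has not yet been used.

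The crux is to replace the system by an approximately unitarily equivalent one in which the \(\psi_i\) are \emph{diagonal-preserving}: \(\psi_i(B_i)\subseteq B_{i+1}\) (so \(\psi_i|_{B_i}\) is composition with a continuous map \(\Omega_{i+1}\to\Omega_i\)) and \(\psi_i(N(B_i))\subseteq N(B_{i+1})\), equivalently \(\psi_i\) is induced by a morphism of the underlying groupoids. Using that one-dimensional NCCW complexes are finitely generated and semiprojective, and the standard form of a unital \(*\)-homomorphism out of such an algebra (a diagonal sum of evaluations along the spectrum, suitably conjugated), I would build for each \(i\) a unital injective \(\psi_i'\colon A_i\to A_{i+1}\) of diagonal-preserving type — its eigenvalue data being a continuous branched section of \(\Omega_{i+1}\) into \(\Omega_i\) with multiplicities, together with permutation-unitary gluing — chosen so that \(\psi_i'\) induces the same ordered \(K_0\)-map and the same map on trace simplices as \(\psi_i\); a uniqueness theorem for \(*\)-homomorphisms between these algebras then gives \(\psi_i'\sim_{\mathrm{a.u.}}\psi_i\) with control good enough for an Elliott approximate intertwining, which yields \(\varinjlim(A_i,\psi_i')\cong A\). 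The hypothesis \(K_1(A_i)=0\) enters exactly here: there is no \(K_1\)-part of \(\psi_i\) to reproduce and no \(K_1\)-clause in the uniqueness theorem, and this is precisely the piece of the invariant that a diagonal-preserving (branched-covering) model cannot realise in a controlled way; with it gone, only ordered \(K_0\) and traces remain to be matched, which the models can be arranged to do. I expect the realisation of such a model with the prescribed \(K_0\)/trace data and summable errors to be the main obstacle; the rest is bookkeeping.

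It remains to transport the diagonal to the limit. After the replacement, \(\psi_i(B_i)\subseteq B_{i+1}\), the conditional expectations are compatible, normalisers are preserved, and the \(\psi_i\) are injective, so \(B:=\overline{\bigcup_i\psi_{i,\infty}(B_i)}\subseteq A\) is abelian, contains the unit, carries the faithful conditional expectation \(\varinjlim E_i\), and is regular. That \(B\) is in addition a MASA with the unique extension property — equivalently, that \(A\cong C^*(\varprojlim G_i)\) with \(B\cong C_0(\varprojlim\Omega_i)\) the diagonal, where \(G_i\) is the groupoid of the pair \((A_i,B_i)\) — follows from the inductive-limit machinery for \(C^*\)-diagonal pairs in the spirit of the arguments of Li and Raad; the one delicate point, stability of the unique extension property under the limit, uses injectivity of the connecting maps together with the diagonal-preserving property. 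Hence \(B\) is a \(C^*\)-diagonal of \(A\).
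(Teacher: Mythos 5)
Your overall strategy coincides with the paper's: put the standard diagonal $B_i$ inside each block (Proposition \ref{prop:C*-diagonal}), replace the connecting maps, up to approximate unitary equivalence, by diagonal-preserving ones (with $K_1=0$ entering through a uniqueness theorem), run an Elliott intertwining, and finish with an inductive-limit theorem for $C^*$-diagonal pairs (Theorem \ref{thm:InductiveLimitofDiagonals}). But there is a genuine gap at the step you yourself flag as ``the main obstacle'': you never construct the diagonal-preserving models $\psi_i'$, you only postulate that a branched-section-with-permutation-gluing map realizing the same $K_0$/trace data exists with summable errors. That existence statement is exactly the technical core of the paper, and it is not obtained by prescribing an invariant and realizing it; instead the paper (i) approximates the given $\phi_n$ in point-norm by an injective unital $n$-standard map $\zeta_n$ (Theorem \ref{thm:ApproxByStandardMap}, built on Liu's $3$-standard interpolation, Lemma \ref{lem:3-standardform}) --- closeness to $\phi_n$ automatically gives the right invariant, so no realization theorem is needed; (ii) uses $K_1=0$ via Robert's Cu-classification (Lemma \ref{lem:ApproxUnitaryEqui}) to replace $\zeta_n$ by $\psi_n=\mathcal{W}\theta\mathcal{W}^*$ with $\mathcal{W}$ a \emph{continuous} unitary path having permutation-matrix endpoints (Theorem \ref{thm:ApproxoByGoodStandardMap}); and (iii) even then $\psi_n$ is not diagonal-preserving into the original $B_{n+1}$ --- one must twist the presentations of the building blocks (the algebras $\overline{A}_n$ with boundary maps conjugated by $\mathsf{V}_n$, following Li's Remark 4.1) before the hypotheses of Theorem \ref{thm:InductiveLimitofDiagonals} (diagonal, normalizers, and expectations preserved) are met. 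Your sketch omits all three of these mechanisms, so the heart of the proof is missing.

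Two smaller points. First, your uniqueness step should be routed through the Cuntz semigroup (Robert's classification), not ``ordered $K_0$ plus traces'' as stated; for unital maps between these algebras the trace-simplex map does recover the Cu-morphism through rank functions and Theorem \ref{thm:CuntzCalculation}, but that argument has to be made --- the paper avoids it by arranging fibrewise unitary equivalence, which gives $Cu(\phi)=Cu(\psi)$ directly. Second, asking for $\psi_i'(B_i)\subseteq B_{i+1}$ on the nose with the original diagonal is generally too rigid; the permutation-endpoint unitaries force the change of presentation in (iii), which is why the paper passes to $\overline{A}_n$ and $\widehat{\psi}_n$ rather than keeping the original pairs.
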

While classifying real rank zero \( C^* \)-algebras, Elliott \cite[Theorem 4.4]{Elliott93} showed that maximally homogeneous \(^*\)-ho\-mo\-mor\-phisms between matrix algebras over \( C(S^1) \) are dense in the space of unital \(^*\)-homomorphisms between these \( C^* \)-algebras when equipped with the point-norm topology. Li \cite[Theorem 2.1.6]{Li97} extended this result to the case of matrix algebras over one-dimensional connected CW-complexes. Li and Raad \cite{Li+Raad23} relied on these results to prove that unital AH-algebras whose building blocks have base spaces of dimension at most one admit \( C^* \)-diagonals. 

We examine the extension of maximal homogeneity to \(^*\)-homomorphisms between one-dimensional NCCW complexes and highlight both the limited supply of such maps and the limitations of this approach for establishing the existence of \( C^* \)-diagonals. To establish Theorem \ref{thm:Main}, we introduce the notion of an \( n \)-standard map between one-dimensional NCCW complexes, prove that any unital \(^*\)-homomorphism between two such complexes can be approximated by an \( n \)-standard map, and combine ideas from Robert's classification \cite{Robert12} of inductive limits of one-dimensional NCCW complexes with Li's results \cite{Li20} on \( C^* \)-diagonals. As a consequence, we obtain:
\begin{cor}[see Corollary \ref{cor:InductiveLimit}]
	Let $A \coloneq \varinjlim (A_n, \phi_n)$ be a unital, separable $C^*$-algebra, where each $A_n$ is a 1-dimensional NCCW complex with trivial $K_1$-groups, and the connecting maps $\phi_n$ are unital and injective. Then $A$ can be written as an inductive limit of 1-dimensional NCCW complexes with unital, injective connecting maps, each of which is $n$-standard for some $n$.
\end{cor}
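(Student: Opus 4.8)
The plan is to keep the \emph{same} building blocks $A_n$ and merely replace each connecting map $\phi_n$ by an $n$-standard approximation, and then to recognise, via Elliott's approximate intertwining, that the new system still has inductive limit isomorphic to $A$. To set up the bookkeeping, use that $1$-dimensional NCCW complexes are finitely generated (recalled in the introduction) to fix for each $n$ a finite generating set $F_n \subseteq A_n$; enlarging the $F_n$ inductively, we may assume $\phi_n(F_n) \subseteq F_{n+1}$ and that $\bigcup_n \phi_{\infty,n}(F_n)$ is dense in $A$. Fix also a summable sequence $\varepsilon_n > 0$.

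By the approximation theorem stated in the introduction, each unital $^*$-homomorphism $\phi_n\colon A_n \to A_{n+1}$ is a point-norm limit of $n$-standard maps, so we may choose a unital $n$-standard map $\psi_n\colon A_n \to A_{n+1}$ with $\|\psi_n(a)-\phi_n(a)\| < \varepsilon_n$ for every $a \in F_n$. Since the original maps $\phi_n$ are injective and all spectra involved are at most one-dimensional, one can moreover arrange that $\psi_n$ is injective, by a small further adjustment of the eigenvalue data of the $n$-standard map so that its point-evaluation part continues to separate points the way $\phi_n$ does. The building blocks are unchanged, so each $A_n$ is still a $1$-dimensional NCCW complex with trivial $K_1$.

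It remains to identify $B \coloneqq \varinjlim(A_n,\psi_n)$ with $A$. Form the interleaved diagram whose top row is $(A_n,\phi_n)$, whose bottom row is $(A_n,\psi_n)$, whose downward maps are $\id_{A_n}\colon A_n \to A_n$, and whose upward maps are $\phi_n\colon A_n \to A_{n+1}$. Every downward triangle commutes exactly, while each upward triangle commutes on $F_n$ to within $\varepsilon_n$ because $\psi_n$ and $\phi_n$ agree there up to $\varepsilon_n$; since the errors are summable and the $F_n$ are compatible generating sets, Elliott's approximate intertwining theorem yields an isomorphism $B \xrightarrow{\ \cong\ } A$ intertwining the two systems. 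Hence $A \cong \varinjlim(A_n,\psi_n)$, an inductive limit of $1$-dimensional NCCW complexes with unital, injective connecting maps, each of which is $n$-standard for some $n$.

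The main obstacle is the second step: producing $n$-standard approximations $\psi_n$ that are simultaneously unital, injective, and close enough to $\phi_n$ on the prescribed finite sets. Unitality is harmless, but injectivity is not automatically inherited under point-norm approximation and must be recovered using the one-dimensional structure of the $A_n$ together with the injectivity of the $\phi_n$; making this perturbation precise (and checking that it does not spoil the $n$-standard form) is the delicate point. The remaining ingredients—selecting compatible finite generating sets, the tolerances $\varepsilon_n$, and running the intertwining—are the standard Elliott-intertwining bookkeeping and require no further structural input such as semiprojectivity, in contrast with the proof of the main theorem.
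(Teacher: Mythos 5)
Your proposal is correct in outline, but it takes a genuinely different route from the paper. The paper deduces this corollary from the proof of Theorem \ref{thm:Mainresult}: there, each $\phi_n$ is first approximated on a finite set by an injective unital standard map $\zeta_n$ (Theorem \ref{thm:ApproxByStandardMap}), then $\zeta_n$ is replaced by a standard map $\psi_n$ of the special $\mathcal{D}$-pair form $\mathcal{W}\theta\mathcal{W}^*$ via Theorem \ref{thm:ApproxoByGoodStandardMap}, whose proof runs through Lemma \ref{lem:ApproxUnitaryEqui} (Cuntz-semigroup computation plus Robert's classification, and hence the trivial $K_1$ hypothesis) to obtain approximate unitary equivalence, and finally Lemma \ref{lem:Isomorphism} ([RS02, Corollary 2.3.3]) gives $A \cong \varinjlim(A_n,\psi_n)$. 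You instead stop after the first step and feed the finite-set approximations with summable errors directly into Elliott's approximate intertwining. This is legitimate and arguably more elementary: it never invokes the $K_1$-triviality or the unitary-equivalence machinery, and for the corollary (which only asserts an isomorphism with standard connecting maps, not any compatibility with diagonals) nothing more is needed; what the paper's longer route buys is the special form of the $\psi_n$, which is exploited later in Theorem \ref{thm:Mainresult} to make the connecting maps preserve the canonical diagonals. Your worry about injectivity is already resolved by the paper's approximation theorem, whose statement includes that $\psi$ is injective whenever $\phi$ is (proved by showing $\mathrm{Sp}(\psi)=\mathrm{Sp}(A)$ via the spectral ball-covering property), so no separate perturbation argument is required.

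Two bookkeeping caveats in your intertwining step. First, the hypothesis of the approximate intertwining theorem is not finite generation but exhaustion: you need finite sets $F_n\subseteq A_n$ (for the top row) and $G_n\subseteq A_n$ (for the bottom row) whose images have dense union in \emph{both} limits, with $\phi_n(F_n)\subseteq F_{n+1}$, $\psi_n(G_n)\subseteq G_{n+1}$, and the $\varepsilon_n$-closeness of $\psi_n$ to $\phi_n$ demanded on the enlarged sets, not only on generators; closeness on a generating set does not by itself control arbitrary elements. This is easily arranged by seeding the finite sets with initial segments of dense sequences of the separable algebras $A_m$ and enlarging inductively (the sets needed at stage $n$ depend only on $\psi_1,\dots,\psi_{n-1}$, so the induction is consistent), but it should be said, since density in the bottom limit is not automatic from your choice of $F_n$, which refers only to the top system. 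Second, the partition size of the standard approximation produced by the approximation theorem depends on $\varepsilon_n$ and the finite set, so the connecting maps are $n_k$-standard for varying $n_k$; this matches the corollary's ``$n$-standard for some $n$'' but is worth stating explicitly.
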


\vspace*{1cm}
\noindent \textbf{Acknowledgments}: This work forms part of the author’s Ph.D. thesis completed at the University of Ottawa under the supervision of Aaron Tikuisis. The author expresses deep gratitude to Aaron Tikuisis for his invaluable guidance, insightful discussions, and unwavering support throughout the Ph.D. journey.

\section{Preliminaries}
Based on \cite{Eilers98}, we define a 1-dimensional NCCW complex as follows:
\begin{defn} A \emph{1-dimensional noncommutative CW-complex (or simply 1-NCCW complex)} is a \(C^*\)-algebra  $A\coloneqq A(E,F,\beta_0 , \beta_1)$  defined as follows: Let $\beta_0 , \beta_1 : E\rightarrow F$ be $^*$-homomorphisms between two finite-dimensional $C^*$-algebras $E$ and $F$. The $C^*$-algebra $A\coloneqq A(E,F,\beta_0 , \beta_1)$ is the pullback of $C([0,1], F)$ and $E$ along $\beta \coloneq \beta_0 \oplus \beta_1 : E\rightarrow F\oplus F$ and $ev_0 \oplus ev_1 : C([0,1], F) \rightarrow F\oplus F$. 
	\begin{center}

		\tikzset{every picture/.style={line width=0.75pt}} 
		
		\begin{tikzpicture}[x=0.75pt,y=0.75pt,yscale=-1,xscale=1]
			
			\draw    (291.2,98.6) -- (354.2,97.63) ;
			\draw [shift={(356.2,97.6)}, rotate = 179.12] [color={rgb, 255:red, 0; green, 0; blue, 0 }  ][line width=0.75]    (8.74,-2.63) .. controls (5.56,-1.12) and (2.65,-0.24) .. (0,0) .. controls (2.65,0.24) and (5.56,1.12) .. (8.74,2.63)   ;
			\draw    (280.09,113.64) -- (280.2,160.6) ;
			\draw [shift={(280.2,162.6)}, rotate = 269.87] [color={rgb, 255:red, 0; green, 0; blue, 0 }  ][line width=0.75]    (8.74,-2.63) .. controls (5.56,-1.12) and (2.65,-0.24) .. (0,0) .. controls (2.65,0.24) and (5.56,1.12) .. (8.74,2.63)   ;
			\draw    (290,178.6) -- (383,178.6) ;
			\draw [shift={(385,178.6)}, rotate = 180] [color={rgb, 255:red, 0; green, 0; blue, 0 }  ][line width=0.75]    (8.74,-2.63) .. controls (5.56,-1.12) and (2.65,-0.24) .. (0,0) .. controls (2.65,0.24) and (5.56,1.12) .. (8.74,2.63)   ;
			\draw    (407.09,113.64) -- (407.2,160.6) ;
			\draw [shift={(407.2,162.6)}, rotate = 269.87] [color={rgb, 255:red, 0; green, 0; blue, 0 }  ][line width=0.75]    (8.74,-2.63) .. controls (5.56,-1.12) and (2.65,-0.24) .. (0,0) .. controls (2.65,0.24) and (5.56,1.12) .. (8.74,2.63)   ;
			
			\draw (122,147) node [anchor=north west][inner sep=0.75pt]   [align=left] {$ $};
			\draw (360.98,89.48) node [anchor=north west][inner sep=0.75pt]    {$C([ 0,1] ,F)$};
			\draw (271.64,169.68) node [anchor=north west][inner sep=0.75pt]    {$E$};
			\draw (391.27,170.92) node [anchor=north west][inner sep=0.75pt]    {$F\oplus F$};
			\draw (271.96,88.97) node [anchor=north west][inner sep=0.75pt]    {$A$};
			\draw (340,188.4) node [anchor=north west][inner sep=0.75pt]    {$\beta $};
			\draw (412,124.4) node [anchor=north west][inner sep=0.75pt]    {$ev_{0} \oplus ev_{1}$};
			\draw (256,126.4) node [anchor=north west][inner sep=0.75pt]    {$\pi _{2}$};
			\draw (317,74.4) node [anchor=north west][inner sep=0.75pt]    {$\pi _{1}$};

		\end{tikzpicture}
	\end{center}
	We can express $A(E,F,\beta_0 ,\beta_1)$ explicitly as
	\begin{equation}
		\{(f,a)\in C([0,1],F)\oplus E: f(0)=\beta_0(a), f(1)=\beta_{1}(a)\}.
	\end{equation}
\end{defn}
Note that \( A(E,F,\beta_0,\beta_1) \) is a unital \(C^*\)-algebra if and only if \(\beta_0\) and \(\beta_1\) are unital \(^*\)-homomorphisms. We denote the set of all unital 1-dimensional NCCW complexes and finite-dimensional \(C^*\)-algebras by \(\text{1-NCCW}_1\) (up to isomorphism).

For \( A \coloneq A(E,F,\beta_0, \beta_1) \in \text{1-NCCW}_1 \), we write \( E \) and \( F \) as follows:
\begin{equation}\label{DefnOfEandF}
	E \coloneq \bigoplus_{i=1}^{l} M_{e_i}(\mathbb{C}) \quad \text{and} \quad F \coloneq \bigoplus_{j=1}^{k} M_{f_j}(\mathbb{C}),
\end{equation}
where \( e_i \) and \( f_j \) are natural numbers. 

Throughout what follows, we will assume \( A \coloneq A(E, F, \beta_0, \beta_1) \in \text{1-NCCW}_{1} \) satisfies conditions (A1) and (A2) below : 
\begin{enumerate}
	\item[(A1)] \(\beta \coloneq \beta_0 \oplus \beta_1 : E \rightarrow F \oplus F\) is injective.
	
	If \(\beta\) is not injective, take \(E' = E / \ker(\beta)\) and \(\beta'_0, \beta'_1 : E' \rightarrow F\) as \(^*\)-homomorphisms induced by \(\beta_0\) and \(\beta_1\). Then \(A' \coloneq A(E', F, \beta'_0, \beta'_1) \in \text{1-NCCW}_1\) with injective \(^*\)-homomorphism \(\beta' : E' \rightarrow F \oplus F\) induced by \(\beta\). Moreover, \(A \cong A' \oplus \ker(\beta)\). Note that the injectivity of either \(\beta_0\) or \(\beta_1\) implies the injectivity of \(\beta\). However, the converse is not true in general. Using Condition (A1), we can identify \(A\) with the \(C^*\)-subalgebra 
	\begin{equation}\label{InjectiveIdentification}
		\{f \in C([0,1],F) : (f(0), f(1)) \in \operatorname{Im}(\beta)\}
	\end{equation} 
	of \(C([0,1], F)\). We will often identify \(A\) in this manner without explicitly stating it whenever we assume (A1) holds.
	
	\item[(A2)] \(\beta_0 : E \rightarrow F\) and \(\beta_1 : E \rightarrow F\) are of the following form:
	\begin{equation}\label{eqn:A2}
		\beta_0(a_1, a_2, \ldots, a_l) = u \left(\bigoplus_{i=1}^{l} (a_i \otimes \I_{r_i})\right) u^*,
	\end{equation}
	and
	\begin{equation}\label{eqn:A2b}
		\beta_1(a_1, a_2, \ldots, a_l) = v \left(\bigoplus_{i=1}^{l} (a_i \otimes \I_{s_i})\right) v^*,
	\end{equation}
	where \(u\) and \(v\) are permutation matrices in \(F\), \(r_i\) and \(s_i\) are nonnegative integers, \(\I_{r_i}\) is the identity matrix of size \(r_i\), and \(\sum\limits_{i=1}^{l} e_i r_i = \sum\limits_{j=1}^{k} f_j = \sum\limits_{i=1}^{l} e_i s_i\).
\end{enumerate}
Up to isomorphism of \(C^*\)-algebras, every \(\beta_0\) and \(\beta_1\) of \(A \in \text{1-NCCW}_1\) satisfy condition (A2) with \(u\) and \(v\) both equal to the identity matrix. To see this, we first note that there exist unitaries \(U, V \in F\) such that
\begin{equation}\label{eqn:A2general}
	\beta_0(a) = U \left(\bigoplus_{i=1}^{l} (a_i \otimes \I_{r_i})\right) U^* \quad \text{and} \quad \beta_1(a) = V \left(\bigoplus_{i=1}^{l} (a_i \otimes \I_{s_i})\right) V^*
\end{equation}
for \(a = (a_1, a_2, \ldots, a_l) \in E\), since \(\beta_0\) and \(\beta_1\) are unital.Let \( w \in C([0,1], \mathcal{U}(F)) \) be a continuous path of unitaries satisfying \( w(0) = U^* \) and \( w(1) = V^* \). Define \( B \coloneq A(E,F,\beta_0', \beta_1') \), where \(\beta_0' = U^* \beta_0 U\) and \(\beta_1' = V^* \beta_1 V\). Then, \(\beta_0'\) and \(\beta_1'\) of \( B \in \text{1-NCCW}_1 \) satisfy condition (A2), and \( A \cong B \) via \((f, a) \mapsto (w f w^*, a)\). Note that we leave \( u \) and \( v \) as permutation matrices in equations \eqref{eqn:A2} and \eqref{eqn:A2b} to accommodate future constructions of new 1-NCCW complexes.

Examples of 1-dimensional NCCW complexes include \( C(X, M_n) \), where \( X \) is a 1-dimensional connected finite CW-complex, the splitting interval algebra \cite{Santiago10}, and the dimension-drop algebra \cite{Jiang-Su99}. Both the prime dimension-drop algebra and \( C(X, M_n) \), where \( X \) is a contractible, connected, 1-dimensional finite CW-complex, have trivial \( K_1 \)-groups (see \cite{Robert12}).

A finite multiset, denoted by \(\{a_1^{\sim k_1}, \ldots, a_m^{\sim k_m}\}\), extends the concept of a standard finite set by allowing elements to appear multiple times, with \(a_i^{\sim k_i}\) representing \(\underbrace{a_i, \ldots, a_i}_{k_i \,\text{times}}\). 

Suppose \(A \coloneq A(E,F,\beta_0, \beta_1) \in \text{1-NCCW}_1\), with \(E\) and \(F\) defined as in \eqref{DefnOfEandF}. We represent the irreducible representation of \(E\) that projects \(E\) onto its \(i\)-th component \(M_{e_i}(\mathbb{C})\) by \(\delta_i\) for \(1 \leq i \leq l\), and denote the set of unitary equivalence classes of all the \(\delta_i\) by \(Sp(E)\).	Similarly, we use the notation \((t,i)\), where \(0 \leq t \leq 1\) and \(1 \leq i \leq k\), to denote an irreducible representation of \(C([0,1], F)\) that sends \(f\) to the \(i\)-th component of \(f(t)\). Therefore, the set of unitary equivalence classes of all irreducible representations of \(C([0,1], F)\) is given by
\[
Sp(C([0,1], F)) \coloneq \coprod_{i=1}^{k} \{(t,i) : 0 \leq t \leq 1\}.
\]
For \((f,a) \in A\), the condition \(f(0) = \beta_0(a)\) and the form of \(\beta_0\) as in \eqref{eqn:A2} indicate that \((0,i)\) may not correspond to an irreducible representation of \(A\). Instead, it can be identified with \(\{\delta_1^{\sim r_{i1}}, \delta_2^{\sim r_{i2}}, \ldots, \delta_l^{\sim r_{il}}\} \subset Sp(E)\). Similarly, \((1,i)\) can be identified with \(\{\delta_1^{\sim s_{i1}}, \delta_2^{\sim s_{i2}}, \ldots, \delta_l^{\sim s_{il}}\} \subset Sp(E)\). So, the set of unitary equivalence classes of irreducible representations of \(A\) is given by
\[
Sp(A) \coloneq Sp(E) \cup \coprod_{i=1}^{k}\{(t,i) : 0 < t < 1\} = Sp(E) \cup \coprod_{i=1}^{k}(0,1)_i.
\]
We write \(sp(A)\coloneq Sp(E) \cup \coprod\limits_{i=1}^{k}[0,1]_i\), and further adopt the following notations and definitions from \cite{Liu19}:

\( \pi_t : A \rightarrow F \) is defined by \( \pi_t(f,a) = f(t) \) for all \( t \in [0,1] \).  \( \pi_e : A \rightarrow E \) and \( q : A \rightarrow F \) are defined by \( \pi_e(f,a) = a \) and \( q(f,a) = f \). Let \( (\alpha_{ij})_{k \times l} \) and \( (\beta_{ij})_{k \times l} \) be matrices representing \( (\beta_0)^* : K_0(E) = \mathbb{Z}^l \rightarrow K_0(F) = \mathbb{Z}^k \) and \( (\beta_1)^* : K_0(E) = \mathbb{Z}^l \rightarrow K_0(F) = \mathbb{Z}^k \), respectively. We use \( \bullet \) or \( \bullet\bullet \) to represent any nonnegative integer.

The topology on $\mathrm{Sp}(A)$ has a basis consisting of open sets of the form
\[
\{ \delta_j \} \cup \coprod_{\{ i \mid \alpha_{ij} \neq 0 \}} (0, \epsilon)_i \cup \coprod_{\{ i \mid \beta_{ij} \neq 0 \}} (1 - \epsilon, 1)_i
\]
for each $0 < \epsilon < 1$ and each $j$, and of the form $(a, b)_i$ for each $0 \leq a < b \leq 1$ and each $i$. This defines a topology that is non-Hausdorff in general.  Moreover, for each \( j \) such that \( \alpha_{ij} \neq 0 \), \( (t, i) \in \operatorname{Sp}(A) \) converges to \( \delta_j \) as \( t \to 0 \); similarly, for each \( j \) such that \( \beta_{ij} \neq 0 \), the \( (t, i) \) converges to \( \delta_j \) as \( t \to 1 \).

In what follows, we recall the construction of test functions useful for relating properties of two \( ^* \)-homomorphisms from a 1-dimensional NCCW complex to a matrix algebra.

For each \(\eta = \frac{1}{m}\), where \(m\) is a positive integer, consider a partition \(0 = w_0 < w_1 < \cdots < w_m = 1\) of \([0,1]\) into \(m\) subintervals, each of equal length \(\frac{1}{m}\). Let \(H(\eta)\) be a finite subset of \(A\) that consists of the following two types of positive elements of \(A\):
\begin{enumerate}
	\item[(a)] For  \(1 \leq i \leq k\) and any nonnegative integers \(a_i\) and \(b_i\) satisfying \(0 \leq a_i < a_i + 2 \le b_i \leq m\), a test function \((f,a)\) of type 1 corresponding to \(\{\delta_j\} \cup \coprod\limits_{\{i \mid \alpha_{ij} \neq 0\}} [0, a_i \eta]_i \cup \coprod\limits_{\{i \mid \beta_{ij} \neq 0\}} [b_i \eta, 1]_i\) is defined by
	\begin{equation}\label{eqn:Type1a}
		a(\delta_s) \coloneq \begin{cases}
			0_{e_s} & \text{if } s \neq j \\
			\mathbf{I}_{e_j} & \text{if } s = j
		\end{cases},
	\end{equation}
	where \(0_{e_s}\) is the zero matrix and \(\mathbf{I}_{e_j}\) is the identity matrix, and
	\begin{equation}\label{eqn:Type1b}
		f(t,i) \coloneq \begin{cases}
			\beta_0^i(a) \frac{\eta - \text{dist}(t, [0, a_i \eta]_i)}{\eta} & \text{if } 0 \leq t \leq (a_i + 1) \eta \\
			0_{f_i} & \text{if } (a_i + 1) \eta \leq t \leq (b_i - 1) \eta \\
			\beta_1^i(a) \frac{\eta - \text{dist}(t, [b_i \eta, 1]_i)}{\eta} & \text{if } (b_i - 1) \eta \leq t \leq 1
		\end{cases}
	\end{equation}
	for each \(t \in [0,1]\) and \(i \in \{1, 2, \ldots, k\}\), where \(\beta_{s}^{i}(a)\) is the \(i\)-th component of \(\beta_{s}(a)\) and \(s = 0, 1\).		
	
	\item[(b)] A test function \((f,a)\) of type 2 corresponding to \(X \coloneq \bigcup\limits_s [w_{r_s}, w_{r_{s+1}}]_i \subset [\eta, 1 - \eta]_i\) is defined by
	\begin{equation}\label{eqn:Type2a}
		a \coloneq 0
	\end{equation}
	and
	\begin{equation}\label{eqn:Type2b}
		f(t, r) \coloneq \begin{cases}
			0_{f_r} & \text{if } r \neq i \\
			(1 - \frac{\text{dist}(t, X)}{\eta}) \mathbf{I}_{f_i} & \text{if } \text{dist}(t, X) < \eta \text{ and } r = i \\
			0_{f_i} & \text{if } \text{dist}(t, X) \geq \eta \text{ and } r = i
		\end{cases}.
	\end{equation}
\end{enumerate}
Let \(\{e_{m m'}^i: 1 \leq m, m' \leq e_i,\,\, 1 \leq i \leq l\}\) and \(\{f_{m m'}^i: 1 \leq m, m' \leq f_i, \,\, 1 \leq i \leq k\}\) be the standard matrix units of \(E\) and \(F\), respectively. We modify elements of \(H(\eta)\) to define a new finite set \(\widetilde{H}(\eta) \subset A_+\). \(\widetilde{H}(\eta)\) consists of elements of the form:
\begin{itemize}
	\item \((f,a)\) constructed like test functions of type 1 with \(\mathbf{I}_{e_j}\) in \eqref{eqn:Type1a} replaced by \(e_{m m'}^j\) for any \(m, m'\).
	\item \((f,a)\) constructed like test functions of type 2 with \((1 - \frac{\text{dist}(t,X)}{\eta}) \mathbf{I}_{f_i}\) in \eqref{eqn:Type2b} replaced by \((1 - \frac{\text{dist}(t,X)}{\eta}) e_{m m'}^i\) for any \(m, m'\).
\end{itemize}
Next, we recall a calculation of the Cuntz semigroup of a 1-dimensional NCCW complex (see \cite{APS11} for a brief overview of the Cuntz semigroup).
\begin{theo}[{\cite[Theorem 3.1]{APS11}}]\label{thm:CuntzCalculation}
	Let \(A \coloneq A(E,F, \beta_0 , \beta_1)\) be a 1-dimensional NCCW complex. Define \(\gamma_0 \coloneq Cu(\beta_0) : Cu(E) \rightarrow Cu(F)\) and \(\gamma_1 \coloneq Cu(\beta_1) : Cu(E) \rightarrow Cu(F)\). Then,
	\begin{equation}
		Cu(A) \cong \{(f,b) \in Lsc([0,1], \overline{\mathbb{N}}^k) \oplus \overline{\mathbb{N}}^l : f(0) = \gamma_0(b), f(1) = \gamma_1(b)\},
	\end{equation}
	where \(\overline{\mathbb{N}} \coloneq \mathbb{N} \cup \{\infty\}\) and \( \mathrm{Lsc}([0,1], \overline{\mathbb{N}}^k) \) denotes the set of lower semicontinuous functions from the interval \([0,1]\) to \(\overline{\mathbb{N}}^k\).
\end{theo}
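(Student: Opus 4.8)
The plan is to exhibit the right-hand side as a pullback of Cuntz semigroups and to invoke the principle that $Cu$ preserves suitable pullbacks. By definition $A$ is the pullback of $\partial \coloneqq ev_0 \oplus ev_1 : C([0,1],F) \to F \oplus F$ and $\beta \coloneqq \beta_0 \oplus \beta_1 : E \to F \oplus F$; since $-\otimes \mathcal{K}$ preserves pullbacks, $A \otimes \mathcal{K}$ is the pullback of $\partial \otimes \id$ and $\beta \otimes \id$. First I would identify the three corner semigroups: because $E$ and $F$ are finite dimensional and $Cu$ is stable and Morita invariant, $Cu(E) \cong \overline{\mathbb{N}}^l$, $Cu(F) \cong \overline{\mathbb{N}}^k$ and $Cu(F \oplus F) \cong \overline{\mathbb{N}}^k \oplus \overline{\mathbb{N}}^k$; and from the computation $Cu(C([0,1])) \cong Lsc([0,1],\overline{\mathbb{N}})$ (rank functions determine Cuntz classes over a one-dimensional space) together with Morita invariance to absorb the blocks $M_{f_j}$, one gets $Cu(C([0,1],F)) \cong Lsc([0,1],\overline{\mathbb{N}}^k)$. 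Under these identifications $Cu(\partial \otimes \id)$ becomes evaluation at $0$ and $1$, and $Cu(\beta \otimes \id) = \gamma_0 \oplus \gamma_1$, so the right-hand side of the statement is exactly the coordinatewise-ordered pullback $Cu(C([0,1],F)) \times_{Cu(F \oplus F)} Cu(E)$. It then suffices to show that the canonical comparison map $Cu(A) \to Cu(C([0,1],F)) \times_{Cu(F \oplus F)} Cu(E)$ is an isomorphism in the category $\mathrm{Cu}$.

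For that I would establish the lifting property that makes $Cu$ commute with this pullback: given classes $x \in Cu(C([0,1],F))$ and $y \in Cu(E)$ with $Cu(\partial)(x) = Cu(\beta)(y)$, and auxiliary $x' \ll x$, $y' \ll y$ compatible in $Cu(F\oplus F)$, produce $z \in Cu(A)$ mapping to $(x,y)$ together with $z' \ll z$ dominating $(x',y')$. Concretely, represent $y$ by a positive $a \in E \otimes \mathcal{K}$; then $\beta(a)$ realizes $Cu(\beta)(y) = (\gamma_0(y),\gamma_1(y)) = (x(0),x(1))$ inside $F \otimes \mathcal{K}$ at the two endpoints, and one glues a representative of $x$ on the interior of $[0,1]$ to $\beta(a)$ across two collar intervals $[0,\eta]$ and $[1-\eta,1]$, using that on a half-open interval a lower semicontinuous $\overline{\mathbb{N}}^k$-valued function can be realized by a norm-continuous path of positive matrices after an arbitrarily small cut-down; this yields a positive $c \in A \otimes \mathcal{K}$ with the prescribed Cuntz data, and the auxiliary classes are handled by the same interpolation applied to $(c-\varepsilon)_+$. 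Running the argument in both directions shows the comparison map is simultaneously surjective and an order-embedding, hence a $\mathrm{Cu}$-isomorphism; naturality in $(E,F,\beta_0,\beta_1)$ falls out of the construction, which is what is needed for the later inductive-limit computations.

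The step I expect to be the main obstacle is the endpoint reconciliation: lower semicontinuity of $f$ at $0$ and $1$ must be matched against the finite-dimensional datum $b$ through the maps $\gamma_0,\gamma_1$, which need not be surjective, so one cannot simply restrict and extend. One must verify that every jump of $f$ permitted by lower semicontinuity as $t \to 0$ (resp. $t\to1$) is already forced by $\gamma_0(b)$ (resp. $\gamma_1(b)$), and conversely that $b$ admits an honest positive representative in $E\otimes\mathcal{K}$ compatible with a chosen continuous representative of $f$ near that endpoint. Getting the $\ll$-relation to track uniformly through the splicing — so that suprema of increasing sequences and the compact-containment relation are preserved in both directions — is where essentially all the technical work sits, the order and additivity assertions being formal once the pullback has been set up.
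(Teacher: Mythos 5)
This theorem is not proved in the paper at all: it is quoted with citation from \cite{APS11}, so there is no in-paper argument to compare your proposal against. Judged on its own, your outline reproduces the strategy of that reference — identify the corner semigroups ($Cu(E)\cong\overline{\mathbb{N}}^l$, $Cu(F)\cong\overline{\mathbb{N}}^k$, $Cu(C([0,1],F))\cong \mathrm{Lsc}([0,1],\overline{\mathbb{N}}^k)$) and show the canonical comparison map from $Cu(A)$ into the pullback of Cuntz semigroups is an order-isomorphism via an endpoint-gluing/lifting argument — and you correctly locate the technical heart of the matter in the endpoint reconciliation and the preservation of $\ll$ and suprema.

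That said, as a proof the proposal has a genuine gap, and it is exactly the step you wave at in the first sentence: ``the principle that $Cu$ preserves suitable pullbacks'' is not an available principle one can invoke — $Cu$ is not exact and does not preserve pullbacks in general, and the assertion that it does for this particular diagram (one leg the surjection $ev_0\oplus ev_1$, fibres finite dimensional, base one-dimensional) \emph{is} the theorem being proved; appealing to it is circular. Everything you then sketch — realizing a lower semicontinuous $\overline{\mathbb{N}}^k$-valued function by a positive element of $C([0,1],F)\otimes\mathcal{K}$ after a cut-down, splicing it to $\beta(a)$ across collars, checking that the resulting class is independent of choices, that the comparison map is an order-embedding (which needs a comparison/cancellation result for $A\otimes\mathcal{K}$, not just surjectivity of the rank data), and that the right-hand side with the coordinatewise order is actually an object of the category $\mathrm{Cu}$ with the comparison map preserving suprema and compact containment — is the substance of the several-page argument in \cite{APS11}, none of which is carried out here. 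So the proposal is a correct roadmap that matches the cited source, but it does not constitute a proof: its load-bearing step is asserted rather than established.
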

\begin{defn}
	Let \(\B\) be a \(C^*\)-subalgebra of a \(C^*\)-algebra \(\mathcal{A}\). We say \(\B\) is a Cartan subalgebra of \(\mathcal{A}\) or \((\B, \mathcal{A})\) is a Cartan pair if
	\begin{enumerate}
		\item \(\B\) is a maximal abelian \(C^*\)-subalgebra of \(\mathcal{A}\),
		\item \(\B\) is regular in \(\mathcal{A}\), that is, \(\mathcal{A}\) is generated by the set of normalizers
		\[
		N_{\mathcal{A}}(\B) = \{ n \in \mathcal{A} : n\B n^*, n^* \B n \subseteq \B \},
		\]
		\item There exists a faithful conditional expectation \(P: \mathcal{A} \rightarrow \B\), that is, \(P(a^*a)= 0\) implies \(a=0\).
	\end{enumerate}
\end{defn}
\begin{defn}\label{defn:Diagonal}
	Let \(\B\) be a \(C^*\)-subalgebra of a \(C^*\)-algebra \(\mathcal{A}\). We say \(\B\) is a \(C^*\)-diagonal of \(\mathcal{A}\) if \(\B\) is a Cartan subalgebra with the unique extension property; that is, every pure state of \(\B\) extends uniquely to a pure state of \(\mathcal{A}\).
\end{defn}

\begin{egg}
	\label{egg:ExampleCartanPair}
	\((D_n(\mathbb{C}), M_n(\mathbb{C}))\) is a Cartan pair, where \(D_n(\mathbb{C})\) consists of all \(n \times n\) diagonal matrices. Indeed, \(D_n(\mathbb{C})\) is a maximal abelian subalgebra since it has vector space dimension \(n\), and every commutative subalgebra of \(M_n(\mathbb{C})\) has dimension at most \(n\). The standard matrix units that generate \(M_n(\mathbb{C})\) are normalizers of \(D_n(\mathbb{C})\), and the canonical conditional expectation from \(M_n(\mathbb{C})\) onto \(D_n(\mathbb{C})\) is faithful. It can be further shown that \(D_n(\mathbb{C})\) is a \(C^*\)-diagonal.
\end{egg}

Following the regularity idea in \cite[Proposition 5.1]{BS21}, one can prove the following:
\begin{prop} \label{prop:C*-diagonal}Let $A \coloneq A(E,F,\beta_0, \beta_1) \in\text{1-NCCW}_1$ and $B \coloneq \{ (f,a) \in A \mid f(t) \in \bigoplus \limits_{i=1}^{k} D_{f_{i}}(\mathbb{C}) \text{ for all } t \in [0,1] \}$. Then $B$ is a $C^*$-diagonal of $A$.
\end{prop}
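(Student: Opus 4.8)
The plan is to check, in turn, the three conditions making $(B,A)$ a Cartan pair and then the unique extension property of Definition~\ref{defn:Diagonal}. Throughout I use (A1) and (A2) (which we assume hold). By \eqref{eqn:A2}--\eqref{eqn:A2b}, conjugation by a permutation matrix only permutes diagonal entries, so $\beta_0$ and $\beta_1$ carry the diagonal $\bigoplus_i D_{e_i}\subseteq E$ into $\bigoplus_j D_{f_j}\subseteq F$ and commute with the canonical expectations $\Delta$ onto these diagonals; conversely $\beta_s(a)$ diagonal forces each $a_j$ diagonal whenever block $j$ occurs in $\beta_s$, and by (A1) each $j$ occurs in $\beta_0$ or $\beta_1$. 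Hence $B=\{(f,a)\in A: f(t)\in\bigoplus_j D_{f_j}\ \forall t,\ a\in\bigoplus_i D_{e_i}\}$, which is itself the commutative algebra $A\big(\bigoplus_i D_{e_i},\bigoplus_j D_{f_j},\beta_0|,\beta_1|\big)=C(\Omega)$ for a compact graph $\Omega$ whose vertices are the diagonal positions of $E$ and whose edges are the diagonal positions of $F$, glued at the endpoints according to $\beta_0,\beta_1$.

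$B$ is abelian since diagonal matrices commute. For maximality, if $x\in A$ commutes with $B$ then for each $t\in(0,1)$, $x(t)$ commutes with $\{b(t):b\in B\}=\bigoplus_j D_{f_j}$ (any prescribed diagonal value at $t$ is realized by a bump function supported near $t$ inside $(0,1)$), so $x(t)$ is diagonal; continuity extends this to the endpoints, and then $\beta_0(\pi_e(x))=x(0)$ and $\beta_1(\pi_e(x))=x(1)$ diagonal forces $\pi_e(x)$ diagonal, so $x\in B$. Define $P\colon A\to B$ by $P(f,a)=(\Delta\circ f,\Delta(a))$; it lands in $B$ precisely because $\Delta\beta_s=\beta_s\Delta$, it fixes $B$ and is a $B$-bimodule map since each $b(t)$ is diagonal, and it is faithful because $\Delta$ on $F$ is. For regularity I follow the idea of \cite[Proposition~5.1]{BS21}: the functions $\varphi f_{mm'}^i$ with $\varphi\in C[0,1]$, $\varphi(0)=\varphi(1)=0$, are normalizers of $B$ in $A$ and generate the ideal $I=\{(f,a)\in A:f(0)=f(1)=0\}\cong C_0((0,1))\otimes F$; the elements of $\widetilde{H}(\eta)$ are also normalizers (their fibrewise values are scalar multiples of partial permutation matrices, which normalize $\bigoplus_j D_{f_j}$), and their images under $\pi_e$ include the matrix units of $E$, which generate $E\cong A/I$. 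So $C^*(N_A(B))$ contains $I$ and surjects onto $A/I$, hence equals $A$.

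For the unique extension property, let $\tau=\mathrm{ev}_\omega$ be a pure state of $B=C(\Omega)$ and $\phi$ a state of $A$ with $\phi|_B=\tau$; I will show $\phi=\tau\circ P$. Since $\tau\circ P$ is a pure state extension of $\tau$ — it is a vector state at a standard basis vector of the irreducible representation of $A$ given by evaluation at an interior point of the edge through $\omega$, or by $\delta_j\circ\pi_e$ when $\omega$ is a vertex $(j,p)$ — this yields existence, uniqueness, and purity of the extension, so $B$ is a $C^*$-diagonal. To prove $\phi=\tau\circ P$, fix $x\in A$ and choose peaks $b_\varepsilon\in B$ with $0\le b_\varepsilon\le1$, $b_\varepsilon(\omega)=1$, supported in an $\varepsilon$-neighbourhood of $\omega$; the standard estimate $|\phi((1-b_\varepsilon)y)|^2\le\phi(1-b_\varepsilon)\phi(y^*y)$ gives $\phi(x)=\phi(b_\varepsilon xb_\varepsilon)$ for every $\varepsilon$. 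It then suffices to show $\|b_\varepsilon xb_\varepsilon-P(x)(\omega)b_\varepsilon^2\|\to0$, since $P(x)(\omega)b_\varepsilon^2\in B$ and $\tau(b_\varepsilon^2)=1$, so that $\phi(x)=\lim_\varepsilon P(x)(\omega)\,\tau(b_\varepsilon^2)=P(x)(\omega)=\tau(P(x))$. When $\omega$ is an interior edge point $(t_0,i,p)$ one takes $b_\varepsilon$ to be a scalar multiple of $e_{pp}^i$ on its support, so that $b_\varepsilon xb_\varepsilon=\big(x(t_0)_{i,pp}+o(1)\big)b_\varepsilon^2$ by continuity of $x$, with $x(t_0)_{i,pp}=P(x)(\omega)$. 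When $\omega$ is a vertex $(j,p)$ one takes $\pi_e(b_\varepsilon)=e_{pp}^j$; using that $a_j\otimes\I_{r}$ acts as the scalar $(a_j)_{pp}\I_{r}$ on the rows and columns glued to $(j,p)$, one finds near $t=0$ that $b_\varepsilon xb_\varepsilon=\beta_0\big(e_{pp}^j(\pi_e x)_je_{pp}^j\big)+o(1)=(\pi_e x)_{j,pp}\,b_\varepsilon^2+o(1)$, and the same scalar $(\pi_e x)_{j,pp}=P(x)(\omega)$ appears near $t=1$.

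The main obstacle is exactly this last norm estimate at the vertices: there a peak function cannot separate the several edges that $\beta_0$ and $\beta_1$ glue together, so $b_\varepsilon xb_\varepsilon$ is genuinely not a scalar multiple of $b_\varepsilon^2$, and one must exploit the rigid block form \eqref{eqn:A2}--\eqref{eqn:A2b} — boundary maps that are block-diagonal with identity multiplicities — to conclude convergence; this is also where (A2), rather than a general unitary conjugation, is essential (it is what makes $P$ land in $B$ in the first place). The remaining points (that the auxiliary peak functions lie in $B$, that the partial-permutation fibres are normalizers of the diagonal, and the elementary peak estimate) are routine.
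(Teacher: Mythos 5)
Your proposal is correct, and it in fact supplies more than the paper does: the paper states Proposition \ref{prop:C*-diagonal} with no written proof, only the remark that one can follow the regularity idea of \cite[Proposition 5.1]{BS21}. Your regularity step is exactly that idea — the normalizers $\varphi f^i_{mm'}$ (with $\varphi$ vanishing at the endpoints) generate the ideal $I=\{(f,a):f(0)=f(1)=0\}\cong C_0((0,1))\otimes F$, while the modified type-1 elements of $\widetilde{H}(\eta)$ are normalizers whose images under $\pi_e$ give the matrix units of $E\cong A/I$, so $C^*(N_A(B))=A$. The remaining verifications you give are also sound: (A1) and (A2) force the $E$-component of elements of $B$ to be diagonal (so $B$ is abelian and equals $C(\Omega)$ for the glued graph $\Omega$), the bump-function argument gives maximality, $\Delta\circ\beta_s=\beta_s\circ\Delta$ for permutation-conjugated block maps makes $P=(\Delta\circ f,\Delta(a))$ a faithful expectation onto $B$, and the unique extension property follows from the peak-function estimate $\phi(x)=\phi(b_\varepsilon x b_\varepsilon)$ together with $\Vert b_\varepsilon x b_\varepsilon-P(x)(\omega)b_\varepsilon^2\Vert\to 0$. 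Your closing paragraph undersells itself: the vertex case you flag as ``the main obstacle'' is already settled by the compression identity you state, namely that the compression of $\bigoplus_{j'} a_{j'}\otimes\I_{r_{j'}}$ to the coordinates glued to the vertex $(j,p)$ equals the scalar $(a_j)_{pp}$ times the corresponding diagonal projection, so that near $t=0$ one has $Q\,x(t)\,Q=(\pi_e x)_{j,pp}Q+o(1)$ by continuity of $x$ at the endpoint; this is precisely where (A2) enters, and no further idea is needed. So the proof is complete in outline, with only routine details left, and it is consistent with (indeed fills in) the argument the paper merely points to.
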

\section{Maximally Homogeneity and Standard Maps}
We begin by generalizing the notion of maximally homogeneous \( ^* \)-homomorphisms between matrix algebras over 1-dimensional CW-complexes, as introduced by \cite{Elliott93}, to 1-dimensional NCCW complexes. We then examine some limitations associated with this type of morphism for 1-dimensional NCCW complexes.

Let \( A \coloneq A(E, F, \beta_0, \beta_1) \in \text{1-NCCW}_{1} \) and let \( \phi: A \rightarrow M_n (\mathbb{C}) \) be a \( ^* \)-homomorphism. Then there exists a unitary \( u \in M_n (\mathbb{C}) \) such that
\begin{equation} \label{eqn:GeneralSpectrum}
	\phi(f, a) = u  \diag \left( a(\delta_1) \otimes \mathbf{I}_{s_1}, \dots, a(\delta_l) \otimes \mathbf{I}_{s_l}, f(w_1), f(w_2), \ldots, f(w_r), 0_\bullet \right) u^*,
\end{equation}
where \( w_1, \ldots, w_r \in \coprod\limits_{i=1}^{k} (0,1)_i \). We define the spectrum of \( \phi \) by
\begin{equation}
	\text{Sp}(\phi) \coloneq \{ \delta_1^{\sim s_1}, \delta_2^{\sim s_2}, \ldots, \delta_l^{\sim s_l}, w_1, w_2, \ldots, w_r \}.
\end{equation}

Let \( (X, d) \) be a metric space. We say two finite sets \( X_1 = \{ w_1, w_2, \ldots, w_n \} \subset X \) and \( X_1' = \{ w_1', w_2', \ldots, w_m' \} \) can be bijectively paired within \( \eta > 0 \) if \( m = n \) and there exists a permutation \( \sigma \) such that \( d(w_i, w_{\sigma(i)}') < \eta \) for all \( 1 \le i \le n \). While it is not always possible to pair the spectra of any two \( ^* \)-homomorphisms from \( A \in \text{1-NCCW}_{1} \) to \( M_n (\mathbb{C}) \) within a given \( \eta = \frac{1}{m} \), where \( m \) is a positive integer, we can pair some subsets of their spectra under some mild constraints.
\begin{lem}[{\cite[Lemma 2.3]{Liu19b}}]\label{lem:Pairing}
	Let \( A \coloneq A(E, F, \beta_0, \beta_1) \in \text{1-NCCW}_{1} \), \( m \) be any positive integer, \( \epsilon \) be any positive real number less than 1, and let \( \phi, \psi: A \rightarrow M_n (\mathbb{C}) \) be \( ^* \)-homomorphisms. If \( \text{Eig}(\phi(h)) \) \footnote{Set of eigenvalues of \( \phi(h) \)} and \( \text{Eig}(\psi(h)) \) can be bijectively paired within \( \epsilon \) for all \( h \in H(\eta) \), where \( \eta \coloneq \frac{1}{m} \), then for each \( i \in \{1, 2, \ldots, k\} \), there exist \( X_i \subset \text{Sp}(\phi) \cap (0,1)_i \) and \( X_i' \subset \text{Sp}(\psi) \cap (0,1)_i \) satisfying the following conditions:
	\begin{enumerate}
		\item \( X_i \) and \( X_i' \) can be bijectively paired within \( 2\eta \),
		\item \( \text{Sp}(\phi) \cap [\eta, 1-\eta]_i \subset X_i \) and \( \text{Sp}(\psi) \cap [\eta, 1-\eta]_i \subset X_i' \).
	\end{enumerate}
\end{lem}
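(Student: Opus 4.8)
The plan is to fix an index $i\in\{1,\dots,k\}$ and reduce the statement to a combinatorial assertion about the finite multisets of spectral points of $\phi$ and $\psi$ lying in the $i$-th open interval $(0,1)_i$, counted with multiplicity; for a subset $Y\subseteq[0,1]$ I write $N_\phi(Y)$ (resp.\ $N_\psi(Y)$) for the number of points of $\mathrm{Sp}(\phi)\cap(0,1)_i$ (resp.\ $\mathrm{Sp}(\psi)\cap(0,1)_i$), with multiplicity, whose interval coordinate lies in $Y$. The conclusion is precisely the statement that there is a partial matching between these two multisets, using only edges joining points at distance less than $2\eta$, which saturates every point of $\mathrm{Sp}(\phi)\cap[\eta,1-\eta]_i$ and every point of $\mathrm{Sp}(\psi)\cap[\eta,1-\eta]_i$: one then takes $X_i$ and $X_i'$ to be the saturated submultisets, and the bijection provided by the matching witnesses (1) and (2). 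The only input I will use from the hypothesis is its restriction to the type-$2$ test functions in $H(\eta)$, because for those the part of the evaluation coming from the finite-dimensional summand $E$ vanishes.

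First I would extract counting inequalities. Let $X\subseteq[\eta,1-\eta]_i$ be a union of grid subintervals and let $h\in H(\eta)$ be the associated type-$2$ test function, given by \eqref{eqn:Type2a}--\eqref{eqn:Type2b}. Feeding $h$ into the normal form \eqref{eqn:GeneralSpectrum} for $\phi$: since $a=0$ the $\delta$-blocks contribute only zeros, the block at a spectral point $(t,i)\in\mathrm{Sp}(\phi)$ contributes $f_i$ eigenvalues all equal to $\max(0,\,1-\mathrm{dist}(t,X)/\eta)$, and the block at a point $(t,j)$ with $j\neq i$ contributes zeros. Hence the number of eigenvalues of $\phi(h)$ equal to $1$ is $f_i\,N_\phi(X)$, and the number exceeding $1-\epsilon$ is $f_i\,N_\phi(\{t:\mathrm{dist}(t,X)<\epsilon\eta\})$. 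Any bijection witnessing that the eigenvalue multisets of $\phi(h)$ and $\psi(h)$ pair within $\epsilon$ must send the $f_iN_\phi(X)$ eigenvalues equal to $1$ of $\phi(h)$ to that many distinct eigenvalues of $\psi(h)$, each exceeding $1-\epsilon$; since $\epsilon<1$ this gives
\[
N_\phi(X)\ \le\ N_\psi\big(\{t:\mathrm{dist}(t,X)<\epsilon\eta\}\big)\ \le\ N_\psi\big(\{t:\mathrm{dist}(t,X)<\eta\}\big),
\]
and symmetrically with $\phi$ and $\psi$ interchanged. Taking $X=[w_a,w_b]_i$ with $1\le a<b\le m-1$ yields $N_\phi([w_a,w_b])\le N_\psi((w_{a-1},w_{b+1}))$ and its reverse; rounding an arbitrary interval $[c,d]\subseteq[\eta,1-\eta]$ out to the smallest grid interval containing it (with the obvious modifications when $[c,d]$ meets only one or two grid cells) gives the cleaner form $N_\phi([c,d])\le N_\psi((c-2\eta,\,d+2\eta))$ for all such $[c,d]$, and its reverse.

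Next I would build the matching. By Hall's marriage theorem it suffices, for the existence of a matching saturating $\mathrm{Sp}(\phi)\cap[\eta,1-\eta]_i$ into $\mathrm{Sp}(\psi)\cap(0,1)_i$, that every submultiset $S$ of $\mathrm{Sp}(\phi)\cap[\eta,1-\eta]_i$ has at least $|S|$ neighbours within $2\eta$ in $\mathrm{Sp}(\psi)\cap(0,1)_i$. Partition $S$ into maximal runs (``clusters'') whose consecutive points, in increasing order, lie at distance less than $4\eta$; distinct clusters are then $4\eta$-separated, so the $2\eta$-neighbourhoods of different clusters are disjoint, and the $2\eta$-neighbourhood of a single cluster equals the $2\eta$-neighbourhood of its convex hull $[c,d]\subseteq[\eta,1-\eta]$. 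Each cluster, being a submultiset of the $\phi$-points in $[c,d]$, has at most $N_\phi([c,d])\le N_\psi((c-2\eta,d+2\eta))$ points, which is exactly its number of neighbours; summing over clusters verifies Hall's condition. Thus there is a matching $M_1$ saturating $\mathrm{Sp}(\phi)\cap[\eta,1-\eta]_i$, and by symmetry a matching $M_2$ saturating $\mathrm{Sp}(\psi)\cap[\eta,1-\eta]_i$; by the Mendelsohn--Dulmage theorem (which merges two matchings saturating prescribed subsets of the two sides into one saturating their union) there is a single matching $M$ saturating both, and $X_i,X_i'$ are taken to be the submultisets saturated by $M$.

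The main obstacle is the combinatorial assembly: I expect the cleanest route to run through the cluster decomposition and the Mendelsohn--Dulmage step above, but one must be careful with boundary conventions near $0$ and $1$ (the grid-rounding step, and spectral points sitting exactly on grid points) and with the degenerate cases $m\le 2$, where the type-$2$ functions alone carry too little information and one instead falls back on the type-$1$ test functions together with the matrices $(\alpha_{ij})$ and $(\beta_{ij})$ to bound the relevant cardinalities directly. By contrast the analytic part — reading off the eigenvalues of $\phi(h)$ and $\psi(h)$ on type-$2$ functions from \eqref{eqn:GeneralSpectrum}, and converting an $\epsilon$-pairing of eigenvalues into an exact comparison of $N_\phi$ on $X$ with $N_\psi$ on its $\epsilon\eta$-neighbourhood — is routine.
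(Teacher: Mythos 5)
First, a caveat on the comparison itself: the paper does not prove this lemma at all --- it is quoted verbatim from Liu \cite[Lemma 2.3]{Liu19b} --- so there is no in-paper argument to measure yours against, and I can only assess your proposal on its own terms. Your main line is sound: with $a=0$ the $\delta$-blocks in \eqref{eqn:GeneralSpectrum} vanish, so for a type-2 function associated to $X$ the eigenvalue $1$ occurs in $\phi(h)$ with multiplicity exactly $f_i N_\phi(X)$ and the eigenvalues of $\psi(h)$ exceeding $1-\epsilon$ number exactly $f_i N_\psi(\{t : \mathrm{dist}(t,X)<\epsilon\eta\})$; the resulting inequalities $N_\phi([c,d])\le N_\psi((c-2\eta,d+2\eta))$, the cluster decomposition verifying Hall's condition, and the Mendelsohn--Dulmage merge are all correct. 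This gives a complete proof whenever $m\ge 3$, i.e.\ whenever type-2 test functions exist, which is the only regime in which the paper ever invokes the lemma (it is always applied with $\eta\le\tfrac{1}{8}$).

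The genuine gap is the case you defer to a ``fallback.'' For $m=2$ the set $H(\tfrac12)$ contains no type-2 functions (no union of grid cells lies in $[\eta,1-\eta]_i=\{\tfrac12\}_i$), only the type-1 functions with $a_i=0$, $b_i=2$, and no amount of care with these and the matrices $(\alpha_{ij})$, $(\beta_{ij})$ can rescue the statement, because as transcribed it is false there. Take $E=\mathbb{C}$, $F=\mathbb{C}\oplus\mathbb{C}$, $\beta_0(a)=\beta_1(a)=(a,a)$, so that $H(\tfrac12)$ consists of the single element $h=\bigl((|2t-1|,|2t-1|),1\bigr)$; let $\phi$ be the direct sum of the evaluations at $(\tfrac12,1)$ and $(\tfrac14,2)$, and $\psi$ the direct sum of the evaluations at $(\tfrac14,2)$ and $(0.26,2)$, both landing in $M_2(\mathbb{C})$. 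Then $\mathrm{Eig}(\phi(h))=\{0,\tfrac12\}$ and $\mathrm{Eig}(\psi(h))=\{\tfrac12,0.48\}$ pair within $\epsilon=\tfrac12<1$, yet $\mathrm{Sp}(\phi)\cap[\tfrac12,\tfrac12]_1$ is nonempty while $\mathrm{Sp}(\psi)\cap(0,1)_1=\emptyset$, so no admissible $X_1'$ exists. The honest resolution is not a different fallback but the observation that the lemma must be read with $m\ge 3$; with that proviso your proof stands.
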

By the Weyl spectral variation inequality \cite{Bhatia07}, \( \Vert \phi(h) - \psi(h) \Vert < \epsilon \) for all \( h \in H(\eta) \) is sufficient to establish the bijective pairing of \( \text{Eig}(\phi(h)) \) and \( \text{Eig}(\psi(h)) \) within \( \epsilon \).

Following \cite[Definition 2.2]{Li+Raad23}, we extend the concept of maximally homogeneous \( ^* \)-homomorphisms to maps between two \( C^* \)-algebras \( A, B \in \text{1-NCCW}_{1} \).
\begin{defn}
	Let \( A, B \in \text{1-NCCW}_{1} \). A unital \( ^* \)-homomorphism \( \phi: A \rightarrow B \) is called maximally homogeneous if, for any \( \mu \in \text{Sp}(B) \), every element of \( \text{Sp}(\phi_\mu) \) has multiplicity at most one, where \( \phi_\mu \coloneqq \mu \circ \phi \).
\end{defn}

\begin{egg}\label{ex:NCCW1}
	Let \( A \coloneq A(E, F, \beta_0, \beta_1) \in \text{1-NCCW}_{1} \) with \( E \coloneq \mathbb{C}^3 \), \( F \coloneq M_2(\mathbb{C}) \), and 
	\begin{equation}
		\beta_0 (a_1, a_2, a_3) \coloneq \begin{pmatrix} 
			a_1 & 0 \\
			0 & a_3
		\end{pmatrix} \quad \text{and} \quad \beta_1 (a_1, a_2, a_3) \coloneq \begin{pmatrix} 
			a_2 & 0 \\
			0 & a_3
		\end{pmatrix}.
	\end{equation}
	The \( ^* \)-homomorphism \( \phi: A \rightarrow C([0,1], M_3(\mathbb{C})) \) defined by
	\begin{equation}
		\phi_t(f, (a_1, a_2, a_3)) \coloneq \begin{cases}
			\begin{pmatrix} 
				a_1 & 0 \\
				0 & f\left(t + \frac{1}{2}\right)
			\end{pmatrix} & \text{if } t \in [0, \frac{1}{2}] \\
			\begin{pmatrix}
				1 & 0 & 0 \\
				0 & 0 & 1 \\
				0 & 1 & 0
			\end{pmatrix} 
			\begin{pmatrix}
				f\left(t - \frac{1}{2}\right) & 0 \\
				0 & a_2
			\end{pmatrix}
			\begin{pmatrix}
				1 & 0 & 0 \\
				0 & 0 & 1 \\
				0 & 1 & 0
			\end{pmatrix} & \text{if } t \in [\frac{1}{2}, 1]
		\end{cases}
	\end{equation}
	is a maximally homogeneous \( ^* \)-homomorphism.
\end{egg}
Let \( A \coloneq A(E, F, \beta_0, \beta_1) \in \text{1-NCCW}_{1} \) and \( \phi: A \rightarrow M_n(\mathbb{C}) \) be a unital \( ^* \)-homomorphism. By grouping the \( \delta_i \)'s and using \eqref{eqn:A2} and \eqref{eqn:A2b}, we can reformulate \eqref{eqn:GeneralSpectrum} to have a unitary \( v \in M_n(\mathbb{C}) \) and
\begin{equation}\label{eqn:SpectrumForm2}
	\phi(f, a) = v \diag\left( a(\delta_1) \otimes \mathbf{I}_{s_1^{'}}, \dots, a(\delta_l) \otimes \mathbf{I}_{s_l^{'}}, f(w_1^{'}) , f(w_2^{'}) , \ldots, f(w_{r^{'}}^{'}) \right) v^*
\end{equation}
with \( w_1^{'}, \ldots, w_{r^{'}}^{'} \in \coprod\limits_{i=1}^{k} [0,1]_i \) and \( 0 \leq s_i^{'} \leq s_i \). 


We use an idea of \cite[Lemma 3.5]{Liu19} to prove the following:
\begin{prop}
Let \( A \coloneqq A(E,F,\beta_0,\beta_1) \in \text{1-NCCW}_1 \), and let \( \phi: A \to M_n(\mathbb{C}) \) be a unital \( ^* \)-homomorphism. Then \( \phi \) is a limit of maximally homogeneous \( ^* \)-homomorphisms if and only if there exists a formulation of \( \phi \) as in \eqref{eqn:SpectrumForm2} where the multiplicity at each \( \delta_i \) is at most one.
\end{prop}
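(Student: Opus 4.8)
The plan is to prove the two implications by elementary means: a perturbation argument for the ``if'' direction and a subsequence/compactness argument for the ``only if'' direction. No use of the pairing lemma (Lemma~\ref{lem:Pairing}) should be needed; in fact the compactness argument is what lets one avoid any delicate analysis of the point-norm topology on $\Hom(A,M_n(\mathbb{C}))$, which is where a naive approach following \cite[Lemma 3.5]{Liu19} would get bogged down.

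For sufficiency I would start from a formulation $\phi = v\,\diag(a(\delta_1)\otimes\I_{s_1'},\dots,a(\delta_l)\otimes\I_{s_l'}, f(w_1'),\dots,f(w_{r'}'))\,v^*$ of type \eqref{eqn:SpectrumForm2} with every $s_i'\le 1$, and build a sequence of maximally homogeneous maps converging to it. The construction has two steps: (1) slide every endpoint parameter $w_j'=(0,c_j)$ to $(\epsilon,c_j)$ and every $w_j'=(1,c_j)$ to $(1-\epsilon,c_j)$, for small $\epsilon>0$, leaving interior parameters untouched; since $\beta_0^{c_j}$ and $\beta_1^{c_j}$ are unital, the blocks $f(0,c_j)$, $f(1,c_j)$ and $f(t,c_j)$ all have size $f_{c_j}$, so the dimension count is unchanged and we still have a unital $^*$-homomorphism into $M_n(\mathbb{C})$; (2) perturb the now-interior parameters by less than $\epsilon$ so that they become pairwise distinct. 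The resulting map $\phi_\epsilon$ has spectrum $\{\delta_1^{\sim s_1'},\dots,\delta_l^{\sim s_l'}\}$ together with $r'$ distinct interior points, each an irreducible representation of $A$, so every element of $\mathrm{Sp}(\phi_\epsilon)$ has multiplicity at most one and $\phi_\epsilon$ is maximally homogeneous. Finally $\phi_\epsilon\to\phi$ in point-norm: evaluated on a (finite) generating set, the $\delta$-blocks are unchanged and each $f(w_j')$-block is the limit of $f(\tilde w_j)$ as the parameters converge, using that every element of $A$ is a norm-continuous $F$-valued function, and conjugation by the fixed $v$ preserves convergence.

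For necessity, suppose $\phi=\lim_p\phi_p$ with $\phi_p$ maximally homogeneous. Writing each $\phi_p$ in its irreducible form \eqref{eqn:GeneralSpectrum}, maximal homogeneity forces the multiplicity of $\phi_p$ at each $\delta_i$ to be $0$ or $1$ and its interior parameters to be pairwise distinct, while unitality removes the zero block. Since each interior block contributes at least $1$ to $n$, the number of interior parameters of $\phi_p$ is at most $n$, so after passing to a subsequence I may assume that the $\delta$-multiplicity vector, the number of interior parameters, and their interval indices $c_j$ are all independent of $p$, that each interior parameter is $(t_j^{(p)},c_j)$ with $t_j^{(p)}\to t_j\in[0,1]$, and --- using compactness of $\mathcal{U}(M_n(\mathbb{C}))$ --- that the conjugating unitaries converge, $v_p\to v$. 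Taking the limit block by block and again using continuity of elements of $A$ (so $f(t_j^{(p)},c_j)\to f(t_j,c_j)$, the value at $t_j\in\{0,1\}$ being $\beta_0^{c_j}$ or $\beta_1^{c_j}$ of the $E$-coordinate) expresses $\phi$ in the shape \eqref{eqn:SpectrumForm2} with $\delta$-multiplicities equal to the stabilized values, each $\le 1$. It remains only to note this really is a formulation of type \eqref{eqn:SpectrumForm2}, i.e. that the displayed $\delta$-multiplicities do not exceed the true multiplicities of the $\delta_i$ in $\phi$; this is automatic, since any endpoint parameter $t_j\in\{0,1\}$ only adds further copies of the $\delta_i$'s to $\phi$ through $\beta_0^{c_j}$ or $\beta_1^{c_j}$.

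There is no deep step here; the argument is essentially bookkeeping. The two points that genuinely need attention are: in sufficiency, that pushing an endpoint parameter into the interior preserves the block size --- this is precisely where unitality of $\beta_0,\beta_1$ enters, and without it $\phi_\epsilon$ could not be kept unital of the correct dimension; and in necessity, confirming that the subsequential limit is honestly of the form \eqref{eqn:SpectrumForm2} rather than merely ``a direct sum of point evaluations and $\delta$-blocks'', which reduces to the multiplicity remark just made together with compactness of the unitary group removing the need for any further conjugation.
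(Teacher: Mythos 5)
Your proposal is correct. The ``if'' direction is essentially the paper's own argument: choose \(\delta\) by uniform continuity of the finitely many functions in \(\mathcal{F}\), push endpoint evaluation points into the interior and perturb all evaluation points to pairwise distinct interior points, which keeps the block sizes (hence unitality) and yields a maximally homogeneous map within \(\epsilon\) on \(\mathcal{F}\). Your ``only if'' direction, however, is genuinely different from the paper's. The paper works with a \emph{single} sufficiently good approximant \(\psi\) (within \(1\) on the test-function set \(H(\eta)\) with \(\eta = \tfrac{1}{8mn}\)), invokes the pairing lemma (Lemma~\ref{lem:Pairing}) to match \(\mathrm{Sp}(\psi)\) with \(\mathrm{Sp}(\phi)\) on the interior, builds from \(\psi\) a map \(\psi_1\) with the same spectrum as \(\phi\) and with a formulation of type \eqref{eqn:SpectrumForm2} having \(\delta\)-multiplicities at most one, and then transfers that formulation to \(\phi\); you instead take a sequence of approximants, stabilize the finite combinatorial data (\(\delta\)-multiplicity vector, number and interval indices of interior evaluation points) by pigeonhole, and use compactness of \([0,1]\) and of \(\mathcal{U}(M_n(\mathbb{C}))\) to pass to a limit of the evaluation points and conjugating unitaries, identifying \(\phi\) block by block; your closing remark that endpoint parameters only add copies of the \(\delta_i\)'s, so the displayed multiplicities do not exceed the true \(s_i\), correctly settles that the limit is honestly of the form \eqref{eqn:SpectrumForm2}. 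Your route is softer and shorter, avoiding the test-function machinery and the quantitative choice of \(\eta\); the paper's route is quantitative (one approximant suffices) and reuses Lemma~\ref{lem:Pairing}, which is needed elsewhere in the paper anyway. Two small points you should make explicit: the approximants are eventually unital (a projection within distance less than \(1\) of the identity equals the identity), which is what removes the \(0_\bullet\) block of \eqref{eqn:GeneralSpectrum}; and ``limit'' means point-norm closure, so either observe that separability of \(A\) together with contractivity makes this closure sequential, or extract a sequence using a countable dense subset --- both are one-line fixes.
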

\begin{proof}
Suppose \( \phi \) is a limit of maximally homogeneous \( ^* \)-homomorphisms. Express \( \phi \) in the form given by \eqref{eqn:SpectrumForm2}, and choose a positive integer \( m \) such that, for all \( 1 \le i \le r' \), each nontrivial \( w_i' \) lies in \( \coprod\limits_{j=1}^{k} \left[ \frac{1}{m}, 1 - \frac{1}{m} \right]_j \). Define \( \eta \coloneqq \frac{1}{8mn} \).
Then, there exists a maximally homogeneous \( ^* \)-homomorphism \( \psi: A \rightarrow M_n (\mathbb{C}) \) such that \( \Vert \phi(h) - \psi(h) \Vert < 1 \) for all \( h \in H(\eta) \). By Lemma \ref{lem:Pairing}, there exists \( [\eta, 1-\eta]_i \subset X_i \subset \text{Sp}(\psi) \cap (0,1)_i \) for each \( i \in \{1,2,\ldots, k\} \) such that \( \text{Sp}(\phi) \cap (0,1)_i \) and \( X_i \) can be bijectively paired within \( 2\eta \). Let \( \varpi_i :X_i \rightarrow \text{Sp}(\phi) \cap (0,1)_i \) be the map that defines the bijective pairing. Define a new \( ^* \)-homomorphism \( \psi_1 : A \rightarrow M_n (\mathbb{C}) \) from \( \psi \) by changing all \( w \in \left((0,\eta)_i \cap \text{Sp}(\psi)\right) \setminus X_i \) to \( 0_i \), all \( w \in \left((1-\eta, 1)_i \cap \text{Sp}(\psi)\right) \setminus X_i \) to \( 1_i \), and all \( w \in X_i \) to \( \varpi_i(w) \). Note that, by construction, \( \psi_1 \) can be written in the form given by \eqref{eqn:SpectrumForm2}, where each \( \delta_i \) appears with multiplicity at most one. Moreover,

\begin{equation}\label{eqn:Newspectrum=Old}
	\text{Sp}(\phi) \cap (0,1)_i = \text{Sp}(\psi_1) \cap (0,1)_i
\end{equation}
for each \( i = 1,2,\ldots,k \). For each \( [0,1]_i \), choose positive integers \( a_i, b_i \) such that \( 1 < a_i < a_i + 2 \le b_i < 8mn \) and

\[ \text{Sp}(\phi) \cap (a_i\eta, b_i\eta)_i = \text{Sp}(\psi) \cap (a_i\eta, b_i\eta)_i = \emptyset .\]

For a test function \( h_j \) of type 1 corresponding to \( \{\delta_j\} \cup \coprod\limits_{\{i \vert \alpha_{ij}\neq 0\}}[0,a_i\eta]_i \cup \coprod\limits_{\{i \vert \beta_{ij}\neq 0\}}[b_i\eta,1]_i \), we have that \( \phi(h_j) \), \( \psi(h_j) \) are projections, and
\begin{equation}\label{eqn:TestFunctionUse1}
	\psi(h_j) = \psi_1(h_j) \quad \text{and} \quad \Vert \phi(h_j) - \psi(h_j) \Vert < 1
\end{equation}
for each \( j = 1,2,\ldots, k \). \eqref{eqn:Newspectrum=Old} and the inequality in \eqref{eqn:TestFunctionUse1} ensure \( \text{Sp}(\phi) \cap \text{Sp}(E) = \text{Sp}(\psi_1) \cap \text{Sp}(E) \). 
Hence, the result holds.

Conversely, consider \( \phi \) in the form of \eqref{eqn:SpectrumForm2} and suppose \( s_{i}^{'} = 0 \,\text{or}\, 1 \) for all \( 0 \le i \le k \). Given a finite set \( \mathcal{F} \) and \( \epsilon > 0 \), there exists a \( \delta > 0 \) such that \( \text{dist}(x,y) < \delta \) implies \( \Vert f(x) - f(y) \Vert < \epsilon \) for all \( (f,a) \in \mathcal{F} \). A \( ^* \)-homomorphism \( \psi: A \rightarrow M_n (\mathbb{C}) \) can be defined by substituting points in \( \text{Sp}(\phi) \cap \coprod\limits_{i=1}^{k} [0,1]_i \) with distinct non-zero points in \( \coprod\limits_{i=1}^{k} (0,1)_i \) within \( \delta \). This \( \psi \) is maximally homogeneous and satisfies \( \Vert \phi(h) - \psi(h) \Vert < \epsilon \) for all \( h \in \mathcal{F} \).
\end{proof}
\begin{egg}
Let \( A \) be as in Example \ref{ex:NCCW1}. The \( ^* \)-homomorphism \( \phi_0 : A \rightarrow M_3(\mathbb{C}) \) defined by \( \phi_{0}(f,(a_1, a_2, a_3)) \coloneq \diag (a_1, f(1)) \) is homotopic to the \( ^* \)-homomorphism \( \phi_1 : A \rightarrow M_3(\mathbb{C}) \) defined by \( \phi_{1}(f,(a_1, a_2, a_3)) \coloneq \diag (a_1, f(0)) \). Notice that \( \phi_0 \) is maximally homogeneous, while \( \phi_1 \) is not. Therefore, homotopy to a maximally homogeneous \( ^* \)-homomorphism does not guarantee maximal homogeneity.
\end{egg}

Unlike unital \( ^* \)-homomorphisms between matrix algebras over 1-dim\-en\-sional CW-complexes, not every unital \( ^* \)-homomorphism between two \( C^* \)-algebras \( A, B \in \text{1-NCCW}_1 \) can be approximated by a maximally homogeneous \( ^* \)-homomorphism. In fact, there may not exist any maximally homogeneous \( ^* \)-homomorphism between two 1-dimensional NCCW complexes, as the next example highlights.

\begin{egg}
Recall the dimension drop algebra
\begin{equation}
	\mathcal{Z}_{p,q} \coloneq \{f \in C([0,1], M_{pq} (\mathbb{C})) : f(0) \in M_p (\mathbb{C}) \otimes \mathbf{I}_q \,\,\text{and}\,\, f(1) \in \mathbf{I}_p \otimes M_q (\mathbb{C})\}
\end{equation}
is a 1-dimensional NCCW complex with \( E \coloneq M_p \oplus M_q \), \( F \coloneq M_{pq} \), \( \beta_0 (a,b) \coloneq a \otimes \mathbf{I}_q \), and \( \beta_1 (a,b) \coloneq b \otimes \mathbf{I}_p \).

\( \Hom (\mathcal{Z}_{2,3}, \mathcal{Z}_{2,5}) \) is non-empty since it contains a \( ^* \)-homomorphism \( \phi: \mathcal{Z}_{2,3} \rightarrow \mathcal{Z}_{2,5} \) defined by
\begin{equation}
	\phi_t (f, (a,b)) \coloneq  u_t \cdot \diag(a \otimes \mathbf{I}_2, f(t)) \cdot u_t^*,
\end{equation}
where \( u_t \in C([0,1], M_{10}(\mathbb{C})) \) defines a continuous path of unitaries from the identity matrix to the permutation matrix
\[
\begin{pmatrix}
	\mathbf{I}_2 & 0_{2 \times 2} & 0_{2 \times 3} & 0_{2 \times 3} \\
	0_{3 \times 2} & 0_{3 \times 2} & \mathbf{I}_3 & 0_{3 \times 3} \\
	0_{2 \times 2} & \mathbf{I}_2 & 0_{2 \times 3} & 0_{2 \times 3} \\
	0_{3 \times 2} & 0_{3 \times 2} & 0_{3 \times 3} & \mathbf{I}_3
\end{pmatrix}.
\]
However, there is no maximally homogeneous \( ^* \)-homomorphism from \( \mathcal{Z}_{2,3} \) to \( \mathcal{Z}_{2,5} \) due to the boundary conditions of \( \mathcal{Z}_{2,3} \) and \( \mathcal{Z}_{2,5} \). Indeed, suppose there is a maximally homogeneous \(^*\)-homomorphism \(\psi: \mathcal{Z}_{2,3} \rightarrow \mathcal{Z}_{2,5}\). For any \(t \in (0,1)\), suppose \(\psi_t : \mathcal{Z}_{2,3} \rightarrow M_{10}(\mathbb{C})\) is defined by  
\begin{equation}
	\label{eqn:psi-mu}
	\psi_t(f, a) = v \, \diag(\delta_1 \otimes \I_{s_1}, \, \delta_2 \otimes \I_{s_2}, \, f(w_1), \, f(w_2), \, \ldots, \, f(w_r)) \, v
\end{equation}  
for all \((f, a) \in \mathcal{Z}_{2,3}\), where \(v\) and \(w_i\) are as in \eqref{eqn:GeneralSpectrum}. Here,  
\[
\delta_1 : M_2(\mathbb{C}) \oplus M_3(\mathbb{C}) \rightarrow M_2(\mathbb{C}) \quad \text{and} \quad
\delta_2 : M_2(\mathbb{C}) \oplus M_3(\mathbb{C}) \rightarrow M_3(\mathbb{C})
\]  
are defined by \(\delta_i(a_1, a_2) = a_i\) for \(i = 1, 2\).

Then \(r \leq 1\), since each \(f(t)\) lies in \(M_6(\mathbb{C})\), and the total matrix size is fixed at \(10\). If \(r = 1\), then the two summands \(\delta_1 \otimes \I_{s_1}\) and \(\delta_2 \otimes \I_{s_2}\) must together account for dimension \(4\), which forces \(s_1 = 2\). But this contradicts the maximal homogeneity of \(\psi\).

On the other hand, if \(r = 0\), then both \(\delta_1\) and \(\delta_2\) must occur with multiplicity greater than one in order to define a unital \(^*\)-homomorphism into \(M_{10}(\mathbb{C})\). This again contradicts maximal homogeneity.

Therefore, no such maximally homogeneous \(^*\)-homomorphism \(\psi: \mathcal{Z}_{2,3} \rightarrow \mathcal{Z}_{2,5}\) can exist.
\end{egg}
The following example demonstrates that the approximate maximal homogeneity of each \( \phi_\mu \) for a unital \( ^* \)-homomorphism \( \phi \) between 1-dimensional NCCW complexes does not ensure the approximate maximal homogeneity of the \( ^* \)-homomorphism itself.
\begin{egg}
Let \( A \coloneq A(E,F,\beta_0,\beta_1) \) with \( E \coloneq \mathbb{C}^3 \), \( F \coloneq M_3(\mathbb{C}) \), and
\begin{equation}
	\beta_0(a_1, a_2, a_3) \coloneq \diag(a_1, a_2, a_3), \quad \beta_1(a_1, a_2, a_3) \coloneq \diag(a_1, a_1, a_2).
\end{equation}		
\\
\resizebox{0.9\linewidth}{!}{
	\begin{minipage}{\linewidth}
		Define \( \phi : A \rightarrow C([0,1], M_4(\mathbb{C})) \) by
		\begin{equation*}
			\phi_t(f, (a_1, a_2, a_3)) \coloneq \begin{cases}
				\begin{pmatrix}
					a_1 & 0 \\
					0 & f\left(\frac{1}{2} - t\right)
				\end{pmatrix} & \text{if } t \in [0, \frac{1}{2}], \\
				\begin{pmatrix}
					0 & 0 & 0 & 1 \\
					1 & 0 & 0 & 0 \\
					0 & 1 & 0 & 0 \\
					0 & 0 & 1 & 0
				\end{pmatrix}
				\begin{pmatrix}
					a_3 & 0 \\
					0 & f\left(\frac{3}{2} - t\right)
				\end{pmatrix}
				\begin{pmatrix}
					0 & 1 & 0 & 0 \\
					0 & 0 & 1 & 0 \\
					0 & 0 & 0 & 1 \\
					1 & 0 & 0 & 0
				\end{pmatrix} & \text{if } t \in [\frac{1}{2}, 1].
			\end{cases}
		\end{equation*}
	\end{minipage}
}\\

Then every \( \phi_t \) is a maximally homogeneous \( ^* \)-homomorphism except for \( \phi_{\frac{1}{2}} \), which is only approximately maximally homogeneous. Suppose \( \phi \) is the limit of maximally homogeneous \( ^* \)-homomorphisms. Choose a fixed finite set \( \mathcal{F} \) and \( \delta > 0 \). Then, there exists a maximally homogeneous \( ^* \)-homomorphism \( \psi: A \rightarrow C([0,1], M_4(\mathbb{C})) \) satisfying \( \Vert \phi(h) - \psi(h) \Vert < \delta \) for all \( h \in \mathcal{F} \).

It follows that
\begin{equation}\label{eqn:homogeneitycounterexample}
	\Vert \phi_0(h) - \psi_0(h) \Vert < \delta \quad \text{and} \quad \Vert \phi_1(h) - \psi_1(h) \Vert < \delta
\end{equation}
for all \( h \in \mathcal{F} \).

By choosing \( \delta \) sufficiently small if necessary, we can assume \( \psi_0 \) and \( \psi_1 \) (using \eqref{eqn:homogeneitycounterexample}) are of the same form as \( \phi_0 \) and \( \phi_1 \) with variations in the evaluation points of \( f \). Moreover, the maximal homogeneity of \(\psi\) implies that for each \( t \in (0,1) \),
\[
\psi_t \left(f, (a_1, a_2, a_3)\right) = U 	\begin{pmatrix}
	a_i & 0 \\
	0 & f(w)
\end{pmatrix} U^*
\]
for some unitary matrix \( U \) and some \( w \in (0,1) \). Set \( g \coloneq (\text{diag}(0,0,1-t), (0,0,1)) \in A \).

Define
\[
S = \{\psi_t(g) : t \in [0,1]\},
\]
\[
S_1 = \{\psi_t(g) : t \in [0,1]\} \cap \{ V \text{diag}(1,0,0, 1-t)V^* : V \in \mathcal{U}(M_4(\mathbb{C})), 0 < t < 1 \},
\]
and
\[
S_2 \coloneq \{\psi_t(g) : t \in [0,1]\} \cap \{ V \text{diag}(0,0,0, 1-t)V^* : V \in \mathcal{U}(M_4(\mathbb{C})), 0 < t < 1 \}.
\]

Then, \( S = S_1 \cup S_2 \). Notice that \( S_1 \cap S_2 = \emptyset \) since matrices in \( S_1 \) always have eigenvalue 1 while matrices in \( S_2 \) never have such an eigenvalue. Therefore, \( S = \{\psi_t(g) : t \in [0,1]\} \) is not connected, and \( \psi \) does not exist. Hence, \(\phi\) is not a limit of maximally homogeneous \(^*\)-homomorphisms.
\end{egg}
Given the aforementioned limitations of maximally homogeneous \( ^* \)-homo\-morphisms between 1-dimensional NCCW complexes, it is evident that these maps cannot play the same role as maximally homogeneous \( ^* \)-homomorphisms between matrix algebras over 1-dimensional CW complexes in the classification of inductive limit \( C^* \)-algebras \cite{Li97, Elliott93} and inductive limit \( C^*\)-diagonals \cite{Li+Raad23}. Thomsen \cite{Thomsen87} showed that maximally homogeneous \( ^* \)-homomorphisms between matrix algebras over 1-dimensional CW complexes are standard maps. We will now present a variation of these standard maps for 1-dimensional NCCW complexes. This variation differs slightly from the one discussed in \cite{AEL20}, generalizes the one considered in \cite{Thomsen87} and is the same as the one implied in the work of \cite{Liu19}.
\begin{defn}\label{defn:n-standardform}
Let \( A \in \text{1-NCCW}_1 \) and \( \phi: A \rightarrow C([0,1], M_q(\mathbb{C})) \) be a *-homomorphism. We call \(\phi\) an \(n\)-standard map if there exists an \(n\)-partition \(I_1 = [z_0, z_1], I_2 = [z_1, z_2], \ldots, I_n = [z_{n-1}, z_n]\) of \([0,1]\) with \(z_0 = 0\) and \(z_n = 1\), and unitaries \(u^{(m)} \in C(I_m, M_q(\mathbb{C}))\) such that for all \(h \in A\) and \(t \in I_m\),
\begin{equation}\label{eqn:n-standardmap}
	\phi_t(h) \coloneqq u^{(m)}(t) \begin{pmatrix}
		h \circ \xi_1^{(m)}(t) & & & \\
		& h \circ \xi_2^{(m)}(t) & & \\
		& & \ddots & \\
		& & & h \circ \xi_{k(m)}^{(m)}(t) \\
		& & & & 0_\bullet
	\end{pmatrix} u^{(m)*}(t),
\end{equation}
where \(\xi_i^{(m)} \in C(I_m, \text{sp}(A))\) and \(m = 1, 2, \ldots, n\).
\end{defn}
\begin{rmk} Recall that if $h\coloneq (f,a)\in A$, then $h(\delta_i)\coloneq a(\delta_i)$ and $h(t,i)\coloneq f(t,i)$. Moreover, the number and nature of the eigenfunctions $\xi_i ^{(m)}$ depend on the subinterval $I_m$ and may not be the same for all $m=1,2,\ldots,n$. 
\end{rmk}
\begin{defn} Let \( A\) and \( B \coloneqq B(E', F', \alpha_0, \alpha_1) \in \text{1-NCCW}_1 \). A $^*$-homo\-morphism \(\phi: A \rightarrow B \subset C([0,1], F')\) is an \(n\)-standard map if for each \(i = 1, 2, \ldots, k'\), the map
	\begin{equation}
		q_i \circ \phi : A \rightarrow C([0,1], M_{f_i'}(\mathbb{C}))
	\end{equation}
	is an \(n\)-standard map, where
	\begin{equation}
		E' \coloneqq \bigoplus_{i=1}^{l'} M_{e_i'}(\mathbb{C}), \quad F' \coloneqq \bigoplus_{j=1}^{k'} M_{f_j'}(\mathbb{C}),
	\end{equation}
	and \( q_i: B \rightarrow C([0,1], M_{f_i'}(\mathbb{C})) \) is the natural projection map.
\end{defn}
\begin{egg}
	The $^*$-homomorphism \(\phi\) in Example \ref{ex:NCCW1} is a \(2\)-standard map with \(U: [0,1] \rightarrow M_3(\mathbb{C})\) defined by
	\[
	U(t) \coloneqq
	\begin{cases}
		\begin{pmatrix}
			1 & 0 & 0 \\
			0 & 1 & 0 \\
			0 & 0 & 1
		\end{pmatrix} & \text{if } t \in [0, \frac{1}{2}], \\
		\begin{pmatrix}
			1 & 0 & 0 \\
			0 & 0 & 1 \\
			0 & 1 & 0
		\end{pmatrix} & \text{if } t \in (\frac{1}{2}, 1].
	\end{cases}
	\]
\end{egg}
The next lemma shows the existence of a \(3\)-standard map connecting two close unital $^*$-ho\-mo\-mor\-phisms.
\begin{lem}[{\cite[Lemma 3.5]{Liu19}}]\label{lem:3-standardform} Let \( A \coloneqq A(E,F,\beta_0, \beta_1) \in \text{1-NCCW}_1 \), \(\mathcal{F} \subset A\) be a finite set, and \(\epsilon > 0\). There exist \(\eta\), \(\eta_1\), and \(\delta > 0\) such that if \(\phi_0, \phi_1 : A \rightarrow M_n (\mathbb{C})\) are unital $^*$-homomorphisms satisfying:
	\begin{enumerate}
		\item \(\Vert \phi_0(h) - \phi_1(h) \Vert < 1\) for all \(h \in H(\eta_1)\),
		\item \(\Vert \phi_0(h) - \phi_1(h) \Vert < \frac{\delta}{8}\) for all \(h \in H(\eta) \cup \widetilde{H}(\eta)\),
	\end{enumerate}
	then there exists a unital \(3\)-standard map \(\phi: A \rightarrow C([0,1], M_n(\mathbb{C}))\) connecting \(\phi_0\) to \(\phi_1\) such that
	\begin{equation}
		\Vert \phi_t(f) - \phi_0(f) \Vert < \epsilon \quad \forall f \in \mathcal{F}, \, t \in [0,1].
	\end{equation}
	Moreover, for each \(\mu \in (\text{Sp}(\phi_0) \cup \text{Sp}(\phi_1)) \cap \coprod\limits_{i=1}^{k}(0,1)_i\), we have 
	\[
	\overline{B_{4\eta_1}(\mu)} \subset \bigcup_{t \in [0,1]} \text{Sp}(\phi_t),
	\]
	where \(\overline{B_{4\eta_1}(\mu)} \coloneqq \{\upsilon \in \coprod\limits_{i=1}^{k}[0,1]_i : \text{dist}(\upsilon, \mu) \leq 4\eta_1 \}.\)
\end{lem}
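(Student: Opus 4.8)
The strategy is to reduce the problem to one about eigenvalue paths in $M_n(\mathbb{C})$ together with careful bookkeeping along the boundary of the interval $[0,1]$. First I would fix $\mathcal{F}$ and $\epsilon$, and use the semiprojectivity/finite generation remarks together with uniform continuity of elements of $\mathcal{F}$ to choose $\eta$ small enough that the test functions in $H(\eta)\cup\widetilde H(\eta)$ ``see'' the spectra of $\phi_0$ and $\phi_1$ at resolution $\eta$: concretely, $\eta$ should be small relative to $\epsilon$ and relative to the gaps between distinct coordinates of the evaluation points $w_i'$ occurring in the normal forms \eqref{eqn:SpectrumForm2} of $\phi_0$ and $\phi_1$. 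Then I would choose $\eta_1 \ll \eta$ (say $\eta_1 = \eta/(8n)$ as in the proof of the Proposition above) and $\delta>0$ small relative to both. The hypotheses (1) and (2) say that $\phi_0$ and $\phi_1$ agree to within $1$ on $H(\eta_1)$ and to within $\delta/8$ on the finer/matrix-unit-enriched families $H(\eta)\cup\widetilde H(\eta)$.

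Next I would apply Lemma \ref{lem:Pairing} with $\epsilon$-hypothesis supplied by condition (1) and the Weyl spectral variation remark: for each $i\in\{1,\dots,k\}$ this produces subsets $X_i\subset \mathrm{Sp}(\phi_0)\cap(0,1)_i$ and $X_i'\subset \mathrm{Sp}(\phi_1)\cap(0,1)_i$ that can be bijectively paired within $2\eta_1$ and that contain $\mathrm{Sp}(\phi_0)\cap[\eta_1,1-\eta_1]_i$ and $\mathrm{Sp}(\phi_1)\cap[\eta_1,1-\eta_1]_i$ respectively. The matrix-unit-enriched functions $\widetilde H(\eta)$ and condition (2) are exactly what is needed to control, in addition, the behavior near the endpoints $\delta_j\in\mathrm{Sp}(E)$ — they force the restrictions of $\phi_0$ and $\phi_1$ to $\pi_e(A)\cong E$ to be unitarily close, so that after a small unitary adjustment one may assume $\phi_0$ and $\phi_1$ have the \emph{same} multiplicities at each $\delta_j$ and the paired-away points $X_i,X_i'$ exhaust all of $\mathrm{Sp}$ outside small collars $(0,\eta_1)_i\cup(1-\eta_1,1)_i$.

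I would then build $\phi$ on a $3$-partition $0<z_1<z_2<1$ as follows. On the middle subinterval $[z_1,z_2]$, use the bijective pairing $X_i\leftrightarrow X_i'$ from Lemma \ref{lem:Pairing}: for each paired pair move the evaluation point linearly (inside $[0,1]_i$, which is legitimate because the move has length $<2\eta_1$, small enough to stay within the collar structure), giving an $n$-standard (indeed $1$-standard) piece on $[z_1,z_2]$ whose endpoints are $\psi_1$-type modifications of $\phi_0,\phi_1$ in which the collar points of $\phi_0$ have been pushed to $0_i$ or $1_i$ and likewise for $\phi_1$. On the first subinterval $[0,z_1]$, connect $\phi_0$ to this modification: pushing a point in $(0,\eta_1)_i$ to $0_i$ is a homotopy within $\mathrm{sp}(A)$ (using that $(t,i)\to\delta_j$ as $t\to 0$ when $\alpha_{ij}\neq 0$, so the degenerate limit is still a legitimate eigenfunction valued in $\mathrm{sp}(A)$), and similarly on $[z_2,1]$; each such piece is $1$-standard with a constant conjugating unitary, so the concatenation is $3$-standard. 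The $\epsilon$-estimate $\|\phi_t(f)-\phi_0(f)\|<\epsilon$ for $f\in\mathcal F$ follows because every eigenfunction is moved a distance $<C\eta$ for a universal constant $C$, and $\eta$ was chosen against the modulus of continuity of $\mathcal F$. Finally, the ``moreover'' clause holds by construction: for $\mu\in(\mathrm{Sp}(\phi_0)\cup\mathrm{Sp}(\phi_1))\cap\coprod_i(0,1)_i$ with, say, $\mu\in[\eta_1,1-\eta_1]_i$, $\mu$ lies in some $X_i$ (or $X_i'$), and its partner is within $2\eta_1$, so the linear homotopy on $[z_1,z_2]$ sweeps out a segment containing $\overline{B_{4\eta_1}(\mu)}$; points $\mu$ in the collars are handled by the endpoint homotopies on $[0,z_1]$ or $[z_2,1]$, which also sweep a $4\eta_1$-ball since $\mu$ is within $\eta_1$ of the boundary and the homotopy runs all the way to the boundary point.

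The main obstacle, and the step requiring the most care, is the boundary bookkeeping at $\mathrm{Sp}(E)$: one must ensure that after the unitary adjustment the two maps genuinely agree on $E$ (not merely up to a small perturbation), so that the $n$-standard homotopy can be written with honest eigenfunctions $\xi_i^{(m)}\in C(I_m,\mathrm{sp}(A))$ rather than only approximately. This is precisely where the enriched test functions $\widetilde H(\eta)$ — which detect the matrix-unit structure of $E$ and $F$, not just ranks — enter, and where the argument of \cite[Lemma 3.5]{Liu19} does its real work; I would follow that argument, using that close projections in $M_n(\mathbb{C})$ are unitarily equivalent by a unitary close to $1$, applied to the projections $\phi_s(h_j)$ attached to type-$1$ test functions, to absorb the discrepancy into $u^{(1)}$ and $u^{(3)}$.
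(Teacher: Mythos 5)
Your overall strategy---choose $\eta_1\ll\eta$, invoke Lemma~\ref{lem:Pairing} to pair $X_i$ with $X_i'$ within $2\eta_1$, collapse the collar points of $(0,\eta_1)_i$ and $(1-\eta_1,1)_i$ to the endpoints $0_i,1_i$, move paired points to their partners, and use the enriched test functions $\widetilde{H}(\eta)$ together with ``close projections are unitarily equivalent via a unitary close to $1$'' to reconcile the behaviour at $\mathrm{Sp}(E)$---is the same strategy as the cited argument of \cite[Lemma 3.5]{Liu19}; the paper itself defers the details to that reference and only records the explicit shape of the resulting $3$-standard map (constant unitary $U_0$ with moving eigenvalues on $[0,\tfrac13]$, frozen eigenvalues with a unitary path from $U_0$ to $U_1P$ on $[\tfrac13,\tfrac23]$, constant unitary $U_1$ with moving eigenvalues on $[\tfrac23,1]$). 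Your redistribution of the eigenvalue moves over the three subintervals is harmless in principle, provided you also fit the unitary rotation from $U_0$ to $U_1$ (up to a permutation) into one of the pieces and match the breakpoints by permutation matrices, as the recorded construction does with $P$ and $Q$.

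There is, however, a genuine gap in your treatment of the ``moreover'' clause. A linear homotopy from $\mu\in X_i$ to its partner travels a distance less than $2\eta_1$, and the collapse of a collar point $\mu\in(0,\eta_1)_i$ to $0_i$ travels less than $\eta_1$; neither sweep contains the two-sided set $\overline{B_{4\eta_1}(\mu)}$, which extends a distance $4\eta_1$ on \emph{both} sides of $\mu$ inside $\coprod_i[0,1]_i$ (for instance, the point at distance $3\eta_1$ from $\mu$ on the side away from the target is never attained). So your assertions that the pairing homotopy ``sweeps out a segment containing $\overline{B_{4\eta_1}(\mu)}$'' and that the collar homotopies ``also sweep a $4\eta_1$-ball'' are false as stated. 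In the construction recorded in the paper this inclusion is not a by-product of the minimal homotopy but an extra requirement imposed on the eigenfunctions: condition (iii) there demands $\mathrm{Im}(\xi_m^{(1)})=\overline{B_{4\eta_1}(w_m)}$ (and likewise for $\xi_j^{(3)}$), i.e.\ each eigenvalue path is deliberately made to detour over the entire closed $4\eta_1$-ball before settling at its prescribed endpoint; this is possible within the first and third subintervals and costs only an extra $O(\eta_1)$ in the norm estimate. You need to build this overshoot into your paths; with that modification (and with the unitary-matching step you already delegate to the $\widetilde{H}(\eta)$ estimates) your sketch aligns with the cited proof.
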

\begin{proof}
	The full details are in \cite[Lemma 3.5]{Liu19}. Here, we present the form of the $3$-standard map, as it is needed for what follows.
	
	Take \(\delta\), \(\eta\), \(\eta_1\), and \( \text{Sp}(\phi_0) \cap [\eta_1, 1-\eta_1]_i \subset X_i \subset \text{Sp}(\phi_0) \cap (0,1)_i\), \( \text{Sp}(\phi_1) \cap [\eta_1, 1-\eta_1]_i \subset X_i^{'} \subset \text{Sp}(\phi_1) \cap (0,1)_i\) such that \(X_i\) and \(X_i^{'}\) can be bijectively paired within \(2\eta_1\) for \(i = 1, 2, \ldots, k\) (see \cite[Lemma 3.5]{Liu19}). Denote the one-to-one correspondence by \(\Lambda_i : X_i \rightarrow X_i^{'}\).
	
	There exist unitaries \(U_0, U_1 \in M_n(\mathbb{C})\) such that
	\begin{equation}\label{eqn:InitialPath}
		\phi_0(f,a) = U_0 \diag\left(a(\delta_1) \otimes \mathbf{I}_{s_1}, \dots, a(\delta_l) \otimes \mathbf{I}_{s_l}, f(w_1), f(w_2), \ldots, f(w_r)\right) U_0^*
	\end{equation}
	and
	\begin{equation}\label{eqn:FinalPath}
		\phi_1(f,a) = U_1 \cdot \diag(a(\delta_1) \otimes \mathbf{I}_{t_1}, \dots, a(\delta_l) \otimes \mathbf{I}_{t_l}, f(c_1), f(c_2), \ldots, f(c_q))U_1^*,
	\end{equation}
	with \(w_m, c_j \in \coprod\limits_{i=1}^{k} (0,1)_i\).
	
	For each \( w_m \in \text{Sp}(\phi_0) \cap (0,1)_i \), define a continuous map \(\xi_m^{(1)}: [0, \frac{1}{3}] \rightarrow \coprod\limits_{i=1}^{k} [0,1]_i\) satisfying:
	\begin{enumerate}
		\item \(\xi_m^{(1)}(0) \coloneqq w_m\),
		\item \(\xi_m^{(1)}\left(\frac{1}{3}\right) \coloneqq \begin{cases}
			0_i & \text{if } w_m \in (0, \eta_1)_i \setminus X_i, \\
			\Lambda(w_m) & \text{if } w_m \in X_i, \\
			1_i & \text{if } w_m \in (1 - \eta_1, 1)_i \setminus X_i,
		\end{cases}\)
		\item \(\text{Im}(\xi_m^{(1)}) = \overline{B_{4\eta_1}(w_m)} \coloneqq \{ w \in \coprod\limits_{i=1}^{k} [0,1]_i \mid \text{dist}(w, w_m) < 4\eta_1 \}\).
	\end{enumerate}
	Similarly, for each \( c_j \in \text{Sp}(\phi_0) \cap (0,1)_i \), define a continuous map \(\xi_j^{(3)}: [\frac{2}{3}, 1] \rightarrow \coprod\limits_{i=1}^{k} [0,1]_i\) satisfying:
	\begin{enumerate}
		\item \(\xi_j^{(3)}(0) \coloneqq c_j\),
		\item \(\xi_j^{(3)}\left(\frac{2}{3}\right) \coloneqq \begin{cases}
			0_i & \text{if } c_j \in (0, \eta_1)_i \setminus X_i, \\
			c_j & \text{if } c_j \in X_i, \\
			1_i & \text{if } c_j \in (1 - \eta_1, 1)_i \setminus X_i,
		\end{cases}\)
		\item \(\text{Im}(\xi_j^{(3)}) = \overline{B_{4\eta_1}(c_j)} \coloneqq \{ c \in \coprod\limits_{i=1}^{k} [0,1]_i \mid \text{dist}(c, c_j) < 4\eta_1 \}\).
	\end{enumerate}
	Define \(\xi_m^{(2)}: [\frac{1}{3}, \frac{2}{3}] \rightarrow \coprod\limits_{i=1}^{k}[0,1]_i\) by \(\xi_m^{(2)}(t) \coloneqq \xi_{m}^{(1)}(\frac{1}{3})\) for \(m = 1, 2, \ldots, r\). Finally, \(\phi_t (f,a)\) is of the form:
	\\
	\resizebox{0.86\linewidth}{!}{
		\begin{minipage}{\linewidth}
			\[
			\begin{cases}
				u^{(1)}(t) \, \diag\bigl( a(\delta_1) \otimes \mathbf{I}_{s_1}, \dots, a(\delta_l) \otimes \mathbf{I}_{s_l}, f(\xi_1^{(1)}(t)), \ldots, f(\xi_r^{(1)}(t)) \bigr) \, u^{(1)}(t)^* & \text{if } t \in \left[0, \frac{1}{3}\right], \\
				\\
				u^{(2)}(t) \, \diag\bigl( a(\delta_1) \otimes \mathbf{I}_{s_1}, \dots, a(\delta_l) \otimes \mathbf{I}_{s_l}, f(\xi_1^{(2)}(\frac{1}{3})), \ldots, f(\xi_r^{(2)}(\frac{1}{3})) \bigr) \, u^{(2)}(t)^* & \text{if } t \in \left[\frac{1}{3}, \frac{2}{3}\right], \\
				\\
				u^{(3)}(t) \, \diag\bigl( a(\delta_1) \otimes \mathbf{I}_{t_1}, \dots, a(\delta_l) \otimes \mathbf{I}_{t_l}, f(\xi_1^{(3)}(t)), \ldots, f(\xi_q^{(3)}(t)) \bigr) \, u^{(3)}(t)^* & \text{if } t \in \left[\frac{2}{3}, 1\right],
			\end{cases}
			\]
		\end{minipage}
	}\\
	
	\noindent where \( u^{(1)} \in C([0, \frac{1}{3}], M_n(\mathbb{C})) \) and \( u^{(3)} \in C([\frac{2}{3}, 1], M_n(\mathbb{C})) \) are constant functions defined by \( u^{(1)}(t) = U_0 \) for \( t \in [0, \frac{1}{3}] \) and \( u^{(3)}(t) = U_1 \) for \( t \in [\frac{2}{3}, 1] \). The function \( u^{(2)} \in C([\frac{1}{3}, \frac{2}{3}], M_n(\mathbb{C})) \) defines a continuous path of unitaries from \( U_0 \) to \( U_1 P \) for some permutation matrix \( P \). We define a piecewise continuous function \( U: [0,1] \rightarrow M_n(\mathbb{C}) \) by
	\begin{equation}\label{eqn:EndpointInvariance}
		U(t) \coloneq \begin{cases}
			u^{(1)}(t) & \text{if } t \in [0, \frac{1}{3}) \\
			u^{(2)}(t) & \text{if } t \in [\frac{1}{3}, \frac{2}{3}) \\
			u^{(3)}(t) & \text{if } t \in [\frac{2}{3}, 1].
		\end{cases}
	\end{equation}
	Note that the two forms of \(\phi_{\frac{2}{3}}\) above coincide, and there exists a permutation matrix \(Q \in M_n(\mathbb{C})\) such that 
	\begin{align*}
		\hspace*{-2cm}	\diag\bigl( a(\delta_1) \otimes \mathbf{I}_{s_1}, \dots, a(\delta_l) \otimes \mathbf{I}_{s_l}, f(\xi_1^{(2)}(\frac{1}{3})), \ldots, f(\xi_r^{(2)}(\frac{1}{3})) \bigr)=\\Q \, \diag\bigl( a(\delta_1) \otimes \mathbf{I}_{t_1}, \dots, a(\delta_l) \otimes \mathbf{I}_{t_l}, f(\xi_1^{(3)}(\frac{2}{3})), \ldots, f(\xi_q^{(3)}(\frac{2}{3})) \bigr) \, Q^*
	\end{align*}
	by construction.
\end{proof}
\begin{rmk}\label{rmk:D-map}
	Given a unital \(n\)-standard map \(\phi: A \rightarrow M_q(\mathbb{C})\) as in Definition~\ref{defn:n-standardform}, note that there exist permutation matrices \(Q_1, Q_2, \ldots, Q_n\), with \(Q_1 = \id\), such that for each \(m = 1, 2, \ldots, n - 1\), we have
	\begin{equation}
		\label{eqn:perm_diag}
		\begin{aligned}
			&Q_m \diag\left( h \circ \xi_1^{(m)}(z_{m}), \ldots, h \circ \xi_{k(m)}^{(m)}(z_{m}) \right) Q_{m}^* \\
			&= Q_{m+1} \, \diag\left( h \circ \xi_1^{(m+1)}(z_{m}), \ldots, h \circ \xi_{k(m+1)}^{(m+1)}(z_{m}) \right) Q_{m+1}^*
		\end{aligned}
	\end{equation}
	for all \(h \in A\). This follows from the fact that the spectrum \(\mathrm{Sp}(\phi_{z_i})\) remains unchanged across adjacent subintervals.
	
	Let \(R : [0,1] \rightarrow M_q(\mathbb{C})\) be defined by
	\begin{equation}
		R(t) \coloneqq
		\begin{cases}
			u^{(1)}(t) Q_1^* & \text{if } t \in [z_0, z_1), \\
			u^{(2)}(t) Q_2^* & \text{if } t \in [z_1, z_2), \\
			\quad\vdots \\
			u^{(n)}(t) Q_n^* & \text{if } t \in [z_{n-1}, z_n].
		\end{cases}
	\end{equation}
	
	Define \(\theta: A \rightarrow C([0,1], M_q(\mathbb{C}))\) by setting, for each \(t \in I_m = [z_{m-1}, z_m)\) with \(m = 1, 2, \ldots, n\) and \(h\in A\),
	\begin{equation}
		\theta_t(h) \coloneqq Q_m
		\begin{pmatrix}
			h \circ \xi_1^{(m)}(t) & & & \\
			& h \circ \xi_2^{(m)}(t) & & \\
			& & \ddots & \\
			& & & h \circ \xi_{k(m)}^{(m)}(t)
		\end{pmatrix}
		Q_m^*.
	\end{equation}
	Here, each \(\xi_i^{(m)} \in C(I_m, \mathrm{sp}(A))\). Then \( R\) is a piecewise continuous unitary-valued function and \(\theta\) is a \(^*\)-homomorphism satisfying \( \phi_t = R(t)\theta_t R(t)^* \) for all \( t \in [0,1] \).
	
	Similarly, for \( A, B \in \text{1-NCCW}_1 \) with \(B\coloneq B(E', F', \alpha_0, \alpha_1) \)and a unital \( n \)-standard map \( \phi: A \rightarrow B \), there exists a piecewise continuous unitary-valued function \( R: [0,1] \rightarrow \mathcal{U}(F') \) and a \( ^* \)-homomorphism \( \theta: A \rightarrow C([0,1], F') \) such that \( \phi_t = R(t) \theta_t R(t)^* \) for all \( t \in [0,1] \). Moreover,
	
	\begin{equation}\label{eqn:D-map}
		\theta_t(h) \coloneq Q_t \begin{pmatrix}
			h \circ \xi_1^{(m)}(t) & \\
			& h \circ \xi_2^{(m)}(t) & \\
			& & \ddots & \\
			& & & h \circ \xi_{k(m)}^{(m)}(t) \\
		\end{pmatrix} Q_t^*
	\end{equation}
	for all \( h \in A \) and all \( t \) in some dividing interval \( I_m \), where \( \xi_i^{(m)} \in C(I_m , \text{sp}(A)) \) and \( Q_t \in F' \) are permutation matrices.
	
\end{rmk}

\begin{defn} Given 1-dimensional NCCW complexes $A$ and $B$ in \(\text{1-NCCW}_1 \), and a unital $n$-standard map $\phi: A \to B$, the pair $(\theta, R)$, consisting of a $^*$-homomorphism $\theta$ and a piecewise continuous unitary-valued function $R$ as described in Remark \ref{rmk:D-map}, is called a \emph{$\mathcal{D}$-pair} associated with $\phi$.
	
\end{defn}
\begin{egg}
	Let \( V : [0,1] \rightarrow M_n(\mathbb{C}) \) be a piecewise continuous function defined by
	\[
	V(t) \coloneq \begin{cases}
		\mathbf{I}_n & \text{if } t \in \left[0, \tfrac{1}{3}\right), \\
		\mathbf{I}_n & \text{if } t \in \left[\tfrac{1}{3}, \tfrac{2}{3}\right), \\
		Q & \text{if } t \in \left[\tfrac{2}{3}, 1\right],
	\end{cases}
	\]
	where \( Q \) is the permutation matrix referenced in Lemma~\ref{lem:3-standardform}, and \( U \) is defined as in \eqref{eqn:EndpointInvariance}. Let \( R : [0,1] \rightarrow M_n(\mathbb{C}) \) be defined by \( R(t) \coloneq U(t) V(t)^* \) and \( \theta : A \rightarrow C([0,1], M_n(\mathbb{C})) \) be defined by
	\\
	\resizebox{0.87\linewidth}{!}{
		\begin{minipage}{\linewidth}
			\[
			\theta_t(f,a) \coloneqq 
			\begin{cases}
				\, \diag\bigl( a(\delta_1) \otimes \mathbf{I}_{s_1}, \dots, a(\delta_l) \otimes \mathbf{I}_{s_l}, f(\xi_1^{(1)}(t)), \ldots, f(\xi_r^{(1)}(t)) \bigr) & \text{if } t \in \left[0, \frac{1}{3}\right), \\
				\\
				\diag\bigl( a(\delta_1) \otimes \mathbf{I}_{s_1}, \dots, a(\delta_l) \otimes \mathbf{I}_{s_l}, f(\xi_1^{(2)}(\frac{1}{3})), \ldots, f(\xi_r^{(2)}(\frac{1}{3})) \bigr) & \text{if } t \in \left[\frac{1}{3}, \frac{2}{3}\right), \\
				\\
				Q \, \diag\bigl( a(\delta_1) \otimes \mathbf{I}_{t_1}, \dots, a(\delta_l) \otimes \mathbf{I}_{t_l}, f(\xi_1^{(3)}(t)), \ldots, f(\xi_q^{(3)}(t)) \bigr) \, Q^* & \text{if } t \in \left[\frac{2}{3}, 1\right]
			\end{cases}
			\]
		\end{minipage}
	}\\
	
	\noindent for all \((f,a)\in A\) and all \( t \in [0,1] \). Then \( (\theta, R) \) is a \( \mathcal{D} \)-pair associated with the 3-standard map constructed in Lemma \ref{lem:3-standardform}.
\end{egg}
Following some of the arguments presented in \cite[Theorem 3.6]{Liu19}, we prove the following result.
\begin{theo}\label{thm:ApproxByStandardMap}
	Let \( A \coloneqq A(E,F,\beta_0, \beta_1) \), \( B \coloneqq B(E', F', \alpha_0, \alpha_1) \in \text{1-NCCW}_1\), \(\mathcal{F}\) be a finite set, and \(\epsilon > 0\). Let \(\phi : A \rightarrow B\) be a unital \(^*\)-homomorphism. Then there exists a unital \(n\)-standard map \(\psi: A \rightarrow B\) such that \(\Vert \phi(h) - \psi(h) \Vert < \epsilon \) for all \( h \in \mathcal{F} \). If \(\phi\) is injective, then \(\psi\) is also injective.
\end{theo}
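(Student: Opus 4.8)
The plan is to follow the scheme of \cite[Theorem~3.6]{Liu19}: subdivide $[0,1]$ into many short subintervals, replace $\phi$ on each by a $3$-standard map produced by Lemma~\ref{lem:3-standardform}, and concatenate. Identify $B$ with its image in $C([0,1],F')$ via (A1), write $F' = \bigoplus_{i=1}^{k'} M_{f_i'}$, and for each $i$ put $\phi^i \coloneqq q_i\circ\phi\colon A\to C([0,1],M_{f_i'})$; write $\phi^i_t\colon A\to M_{f_i'}$ for the value of this path at $t$, so each $\phi^i_t$ is a unital $^*$-homomorphism and $t\mapsto\phi^i_t$ is norm-continuous. First apply Lemma~\ref{lem:3-standardform} to $A$ with the finite set $\mathcal F$ and tolerance $\epsilon/2$, obtaining $\eta,\eta_1,\delta>0$. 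Enlarge $\mathcal F$ to $\mathcal G\coloneqq\mathcal F\cup H(\eta_1)\cup H(\eta)\cup\widetilde{H}(\eta)\subset A$ and, using compactness of $[0,1]$, fix a uniform partition $0=t_0<t_1<\dots<t_N=1$ fine enough that, for every $i$, every $j$, and all $s,s'\in[t_{j-1},t_j]$, one has $\lVert\phi^i_s(g)-\phi^i_{s'}(g)\rVert<\min\{1,\delta/8,\epsilon/2\}$ for all $g\in\mathcal G$, and moreover the Hausdorff distance between $\mathrm{Sp}(\phi^i_s)$ and $\mathrm{Sp}(\phi^i_{s'})$, as subsets of $\mathrm{sp}(A)$, is less than $\eta_1$ (this last condition is only needed for the injectivity claim).

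Now for each $j$ and $i$ the pair $\phi^i_{t_{j-1}},\phi^i_{t_j}\colon A\to M_{f_i'}$ satisfies hypotheses (1)--(2) of Lemma~\ref{lem:3-standardform}, so after the affine identification $[0,1]\cong[t_{j-1},t_j]$ there is a unital $3$-standard map $\Psi^{(j,i)}\colon A\to C([t_{j-1},t_j],M_{f_i'})$ from $\phi^i_{t_{j-1}}$ to $\phi^i_{t_j}$ with $\lVert\Psi^{(j,i)}_t(f)-\phi^i_{t_{j-1}}(f)\rVert<\epsilon/2$ for all $f\in\mathcal F$ and $t\in[t_{j-1},t_j]$, its three dividing subintervals being the equal thirds of $[t_{j-1},t_j]$, independently of $i$. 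Glue $\Psi^{(1,i)},\dots,\Psi^{(N,i)}$ at the nodes $t_j$ — where both neighbouring pieces equal $\phi^i_{t_j}$ — to get a unital $^*$-homomorphism $\Psi^i\colon A\to C([0,1],M_{f_i'})$ that is $3N$-standard for the common partition by multiples of $1/(3N)$, and set $\Psi\coloneqq(\Psi^1,\dots,\Psi^{k'})\colon A\to C([0,1],F')$, i.e.\ $q_i\circ\Psi=\Psi^i$. The value of $\Psi(h)$ at $0$ equals that of $\phi(h)$, and likewise at $1$; since $\phi(A)\subseteq B$, the pair of these two values lies in $\operatorname{Im}(\alpha_0\oplus\alpha_1)$, so $\Psi$ maps into $B$ and is a unital $3N$-standard map $A\to B$. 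Finally, for $h\in\mathcal F$ and $t\in[t_{j-1},t_j]$, $\lVert\Psi^i_t(h)-\phi^i_t(h)\rVert\le\lVert\Psi^{(j,i)}_t(h)-\phi^i_{t_{j-1}}(h)\rVert+\lVert\phi^i_{t_{j-1}}(h)-\phi^i_t(h)\rVert<\epsilon$, whence $\lVert\Psi(h)-\phi(h)\rVert<\epsilon$.

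For the last assertion I would use that a unital $^*$-homomorphism $\rho$ out of the $1$-NCCW complex $A$ is injective if and only if its total spectrum $\mathcal S(\rho)\coloneqq\bigcup_\nu\mathrm{Sp}(\nu\circ\rho)\subseteq\mathrm{Sp}(A)$ (the union over all irreducible representations $\nu$ of its codomain) is dense in $\mathrm{Sp}(A)$, since $\ker\rho=\bigcap_{\mu\in\mathcal S(\rho)}P_\mu$ with $P_\mu$ the primitive ideal at $\mu$. Assume $\phi$ injective, so $\mathcal S(\phi)$ meets every basic open set, in particular every $(a,b)_i$. The ``Moreover'' clause of Lemma~\ref{lem:3-standardform} gives $\bigcup_{t\in[t_{j-1},t_j]}\mathrm{Sp}(\Psi^{(j,i)}_t)\supseteq\bigcup\{\overline{B_{4\eta_1}(\mu)}:\mu\in(\mathrm{Sp}(\phi^i_{t_{j-1}})\cup\mathrm{Sp}(\phi^i_{t_j}))\cap\coprod_i(0,1)_i\}$; together with the Hausdorff-continuity of $t\mapsto\mathrm{Sp}(\phi^i_t)$ and the fineness of the partition, every point of $\mathcal S(\phi)\cap\coprod_i(0,1)_i$ lies within $\eta_1$ of a spectrum point of some $\phi^i$ at an adjacent node, which the corresponding $4\eta_1$-ball then contains. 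One subtlety is coverage all the way up to the endpoints $0_i,1_i$ of $[0,1]_i$, since Lemma~\ref{lem:3-standardform} controls the spectrum directly only on $[\eta_1,1-\eta_1]_i$; here one produces spectrum of $\phi$ near the endpoints, propagates it to a node, and uses that a closed $4\eta_1$-ball about a point of $(0,4\eta_1)_i$ already contains a half-open interval $(0,c)_i$ reaching $0_i$ (and symmetrically near $1_i$). This yields $\mathcal S(\Psi)\cap(0,1)_i=(0,1)_i$ for each $i$, hence $\mathcal S(\Psi)$ is dense and $\Psi$ is injective.

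I expect the endpoint bookkeeping in this injectivity argument to be the main obstacle; the rest is the compactness-and-gluing scheme above, whose only genuine care point is arranging that a single partition serves all $k'$ components simultaneously — which, via the endpoint-preservation of the gluing, is exactly what forces $\Psi$ into $B$.
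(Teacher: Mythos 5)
Your construction is the same as the paper's: identify $B$ inside $C([0,1],F')$ via (A1), choose a partition fine enough that $\phi$ moves by less than $\min\{1,\delta/8,\epsilon/2\}$ on the relevant test sets over each subinterval, apply Lemma~\ref{lem:3-standardform} componentwise on each subinterval, glue at the nodes (where both pieces equal $\phi^i_{t_j}$), and land in $B$ because the values at $0$ and $1$ are untouched; the norm estimate is the same triangle inequality, and the injectivity claim rests on the ``moreover'' spectral-coverage clause. The approximation half of your argument is correct as written (the only cosmetic difference is that the paper extracts constants $\eta^{(i)},\eta_1^{(i)},\delta^{(i)}$ separately for each target $M_{f_i'}$ and takes minima, which is immaterial given how Lemma~\ref{lem:3-standardform} is stated).

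The genuine gap is in the injectivity step, where you invoke uniform ``Hausdorff-continuity'' of $t\mapsto \mathrm{Sp}(\phi^i_t)$ as a subset of $\mathrm{sp}(A)$. This is neither proved nor obviously true: $\mathrm{sp}(A)$ is non-Hausdorff, and as $t$ varies a spectral point in $(0,1)_j$ can be absorbed at an endpoint into multiplicities of some $\delta_r$ and re-emitted into a \emph{different} interval $(0,1)_{j'}$ with $\alpha_{j'r}\neq 0$; in the metric actually used for pairing (distance within a fixed copy $[0,1]_j$) the spectrum then jumps. Nothing in Lemma~\ref{lem:3-standardform} or Lemma~\ref{lem:Pairing} supplies the continuity you assume. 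The repair is already in your setup and is exactly what the paper does: since you arranged $\lVert\phi^i_s(h)-\phi^i_{s'}(h)\rVert<1$ for $h\in H(\eta_1)$ across each subinterval, Lemma~\ref{lem:Pairing} pairs $\mathrm{Sp}(\phi^i_z)\cap[\eta_1,1-\eta_1]_j$ with the spectrum at the adjacent node within $2\eta_1$, and then the closed $4\eta_1$-ball from the ``moreover'' clause still contains the original point since $\eta_1+2\eta_1<4\eta_1$. Two further details you should not skip: a point of $\mathrm{Sp}(A)$ might a priori be witnessed by $\phi$ only through representations of $B$ factoring through $E'$, and since these contribute only finitely many spectral points one must first replace it by a nearby point (within $\eta_1$, and inside $[\eta_1,1-\eta_1]_j$ so the pairing lemma applies) realized over the interval part of $\mathrm{Sp}(B)$, as the paper does; and the endpoint coverage of $0_j,1_j$ (hence of $\mathrm{Sp}(E)$) should be argued via the $4\eta_1$-balls exactly as you sketch, which then matches the paper's conclusion $\mathrm{Sp}(\psi)=\mathrm{Sp}(A)$.
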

\begin{proof}
	Without loss of generality, we assume \( 0 < \epsilon < 1 \). For each \( i \), \( 1 \le i \le k' \), apply Lemma \ref{lem:3-standardform} to \( A \), \( \frac{\epsilon}{2} \), \( M_{f_i'}(\mathbb{C}) \), and the finite set \( \mathcal{F} \) to obtain \( \eta_1^{(i)} \), \( \eta^{(i)} \), and \( \delta^{(i)} \). Set \[
	\eta_1 \coloneq \min\{ \eta_1^{(i)} : 1 \le i \le k' \} \quad \text{and} \quad 	H_i \coloneq H(\eta^{(i)}) \cup \widetilde{H}(\eta^{(i)}) \cup H(\eta_1^{(i)}) \cup H(\eta_1),
	\]  
	for each \( i \).
	Then there exists \( \overline{\delta}_i > 0 \) such that, for any \( w, w' \in [0,1]_i \) with \( \operatorname{dist}(w, w') < \overline{\delta}_i \), we have 
	\begin{equation} \label{eqn:ApproxInequality}
		\left\| \phi_w(h) - \phi_{w'}(h) \right\| 
		< \min\left\{ \frac{\epsilon}{2}, \frac{\delta^{(i)}}{8} \right\} 
		\quad \text{for all } 
		h \in H_i .
	\end{equation}
	Set \(\delta \coloneq \min\{ \overline{\delta}_i : 1 \le i \le k' \}\) For each \(i\), consider an \(m\)-partition \(0 = z_0 < z_1 < \ldots < z_m = 1\) of \([0,1]_i\) with \(\text{dist}(z_{j-1}, z_j) =\frac{ \delta}{2}\) for \(j = 1, 2, \ldots, m\). Denote the dividing interval \(I_j^i \coloneqq [z_{j-1}, z_j]_i\) for \(j = 1, 2, \ldots, m\) and \(i = 1, 2, \ldots, k'\).
	
	
	For a fixed \(i \in \{1, \ldots, k'\}\), set \(w_{j-1} \coloneqq (z_{j-1}, i)\) for each \(j = 1, 2, \ldots, m\). Let \(\phi_{w_0}\) be as in \eqref{eqn:InitialPath} and \(\phi_{w_1}\) be as in \eqref{eqn:FinalPath}. By \eqref{eqn:ApproxInequality} and Lemma \ref{lem:3-standardform}, we obtain a \(3\)-standard map \(\psi^{i1} \coloneqq \left.\psi \right|_{I_1^i}: A \rightarrow C(I_1^i, M_{f_i'}(\mathbb{C}))\) satisfying
	\begin{equation}\label{eqn:InitialApproxStep}
		\Vert \psi^{i1}_z(f) - \phi_{w_0}(f) \Vert < \frac{\epsilon}{2} \,\,\, \forall f \in \mathcal{F}, \,\, z \in I_1^i,
	\end{equation}
	with \(\psi^{i1}_{z_0} \coloneqq \phi_{w_0}\), \(\psi^{i1}_{z_1} \coloneqq \phi_{w_1}\), and for each \(\mu \in (\text{Sp}(\psi^{i1}_{z_0}) \cup \text{Sp}(\psi^{i1}_{z_1})) \cap \coprod\limits_{q=1}^{k} (0,1)_q\), we have
	\begin{equation} 
		\overline{B_{4\eta_1^{(i)}}(\mu)} \subset \bigcup_{t \in I_1^i} \text{Sp}(\psi^{i1}_t). 
	\end{equation}
	
	By repeating the process for each \(j = 2, \ldots, m\), we similarly obtain, from Lemma \ref{lem:3-standardform}, \(3\)-standard maps \(\psi^{ij} \coloneqq \left.\psi \right|_{I_j^i}: A \rightarrow C(I_j^i, M_{f_i'}(\mathbb{C}))\) satisfying
	\begin{enumerate}
		\item \(\psi^{ij}_{z_{j-1}} = \phi_{w_{j-1}}\) and \(\psi^{ij}_{z_j} = \phi_{w_j}\) for each \(j = 2, \ldots, m\),
		\item For each \(j = 2, \ldots, m\), we have that 
		\begin{equation}\label{eqn:3m-standardInequality}
			\Vert \psi^{ij}_z(f) - \phi_{w_{j-1}}(f) \Vert < \frac{\epsilon}{2} \,\, \forall f \in \mathcal{F}, \,\, z \in I_j^i.
		\end{equation}
		\item For each \(j = 2, \ldots, m\) and each \(\mu \in (\text{Sp}(\psi^{ij}_{z_{j-1}}) \cup \text{Sp}(\psi^{ij}_{z_j})) \cap \coprod\limits_{q=1}^{k} (0,1)_q\), we have 
		\begin{equation}\label{eqn:BallinSpectrum} 
			\overline{B_{4\eta_1^{(i)}}(\mu)} \subset \bigcup_{t \in I_j^i} Sp(\psi^{ij}_t).
		\end{equation}
	\end{enumerate}
	It follows from \eqref{eqn:ApproxInequality}, \eqref{eqn:InitialApproxStep}, and \eqref{eqn:3m-standardInequality} that
	\begin{equation}\label{eqn:closerestriction}
		\Vert \left.\psi \right|_{I_j^i}(f) - \left.\phi \right|_{I_j^i}(f) \Vert < \epsilon
	\end{equation}
	for all \(f \in \mathcal{F}\) and all \(j = 1, 2, \ldots, m\). We can fit all the \(\psi^{ij} \coloneqq \left.\psi \right|_{I_j^i}\) together to define a \(3m\)-standard map \(\psi^i : A \rightarrow C([0,1], M_{f_i'}(\mathbb{C}))\). For each \(\delta_r \in \text{Sp}(E')\) and \(h \in A\), define
	\begin{equation}\label{eqn:Equalityof3m-standard}
		\psi_{\delta_r}(h) \coloneqq \phi_{\delta_r}(h) \,\,\, \text{for all}\,\, h \in A,
	\end{equation}
	where \(\phi_{\delta_r} \coloneqq \delta_r \circ \phi\). Using \eqref{eqn:Equalityof3m-standard} and combining all the \(\psi^i\), we define a \(3m\)-standard map \(\psi : A \rightarrow B\), since \(\phi_{\tau}(h) = \psi_{\tau}(h)\) for all \(\tau = 0, 1\) and \(h \in A\). We obtain from \eqref{eqn:closerestriction} that
	\begin{equation}
		\Vert \psi(f) - \phi(f) \Vert < \epsilon
	\end{equation}
	for all \(f \in \mathcal{F}\).
	
	Next, we prove that if \(\phi\) is injective, meaning that \(\text{Sp}(\phi) \coloneqq \bigcup\limits_{\mu \in \text{Sp}(B)} \text{Sp}(\phi_\mu) = \text{Sp}(A)\), then the map \(\psi\) defined above is also injective. For a fixed \(1\le r\le k\), let \(\upsilon_0 \in (0,1)_r\). Set \(\mathcal{V} \coloneqq \{\upsilon \in (0,1)_r \mid \text{dist}(\upsilon, \upsilon_0) \le \eta_1\}\) and \(Y \coloneqq \coprod\limits_{i=1}^{k'} (0,1)_i\). The injectivity of \(\phi\) implies that
	\[
	\mathcal{V} \subset \text{Sp}(\phi) \coloneqq \bigcup\limits_{\mu \in Y} \text{Sp}(\phi_\mu) \cup \bigcup\limits_{\mu \in Sp(E')} \text{Sp}(\phi_\mu).
	\]
	Hence, \(\mathcal{V} \cap \bigcup\limits_{\mu \in Y} \text{Sp}(\phi_{\mu})\) contains infinitely many points. Choose \(\upsilon_1 \in \mathcal{V} \cap \bigcup\limits_{\mu \in Y} \text{Sp}(\phi_{\mu}) \cap [\eta_1 , 1 - \eta_1]_r\). Then \(\upsilon_1 \in \text{Sp}(\phi_z)\) for some \(z \in I_j^i\). By \eqref{eqn:ApproxInequality}, \(\Vert \phi_z(h) - \phi_{w_{j-1}}(h) \Vert < 1\) for all \(h \in H(\eta_1 )\). It follows from Lemma \ref{lem:Pairing} that there exists \(\upsilon_2 \in \text{Sp}(\phi_{w_{j-1}})\) such that \(\text{dist}(\upsilon_1, \upsilon_2) < 2 \eta_1\). Consequently, we get that
	\[
	d(\upsilon_0, \upsilon_2) \le d(\upsilon_0, \upsilon_1) + d(\upsilon_1, \upsilon_2) < \eta_1 + 2\eta_1 \le \eta_1 ^{(i)}+ 2\eta_1^{(i)}  = 3\eta_1^{(i)},
	\]
	and \(\upsilon_0 \in \overline{B_{4\eta_1^{(i)}}(\upsilon_2)} \subset \bigcup\limits_{t \in I_j^i} \text{Sp}(\psi_t)\) by \eqref{eqn:BallinSpectrum}. Since \(\upsilon_0\) and \(r\) are arbitrary, we have \(\coprod\limits_{i=1}^{k}(0,1)_i \subset \text{Sp}(\psi)\).

	For \(\upsilon_0 \in (0, \eta_1)_r\), we have
	\[
	d(0, \upsilon_0) \le d(0, \upsilon_0) + d(\upsilon_0, \upsilon_2) \le 4\eta_1^{(i)}.
	\]
	Similarly, for \(\upsilon_0 \in (1 - \eta_1, 1)_r\),
	\[
	d(1, \upsilon_0) \le d(1, \upsilon_0) + d(\upsilon_0, \upsilon_2) \le 4\eta_1^{(i)}.
	\]
	Hence, \((0,r)\) and \((1,r)\) are in \(\overline{B_{4\eta_1^{(i)}}(\upsilon_2)} \subset \bigcup\limits_{t \in I_j^i} \text{Sp}(\psi_t)\). Thus, \((0,r)\) and \((1,r)\) are in \(\text{Sp}(\psi)\) for all \(r = 1, 2, \ldots, k\). We conclude that \(\text{Sp}(E) \subset \text{Sp}(\psi)\) and \(\text{Sp}(\psi) = \text{Sp}(A)\). Therefore, \(\psi\) is injective.
\end{proof}
\section{$C^*$-diagonals of Inductive Limits of $1$-dimensional NCCW complexes}
Finally, we establish the existence of \( C^* \)-diagonals in certain inductive limits of 1-dimensional NCCW complexes. Here, we denote the set of piecewise continuous function from \([0,1]\) to a finite dimensional \(C^*\)-algebra \(F\) by \(PC([0,1],F)\). For a piecewise continuous function \( R: [0,1] \rightarrow \mathcal{U}(F') \), we define a piecewise continuous function \( R^*: [0,1] \rightarrow \mathcal{U}(F') \) by \( W^*(t) = W(t)^* \).

\begin{lem}\label{lem:ApproxUnitaryEqui}
	Let \(A\) and \(B\) be 1-dimensional NCCW complexes with \(K_1(A) = 0\), and let \(\phi, \psi: A \rightarrow B\) be \( ^* \)-homomorphisms. If \(\pi \circ \phi\) and \(\pi \circ \psi\) are unitarily equivalent for every \(\pi \in \text{Sp}(B)\), then \(\phi\) and \(\psi\) are approximately unitarily equivalent. 
\end{lem}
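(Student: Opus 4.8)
The plan is to reduce the statement to an equality of induced Cuntz-semigroup morphisms and then invoke the uniqueness half of Robert's classification \cite{Robert12}. Write $B = B(E',F',\alpha_0,\alpha_1)$, so $\mathrm{Sp}(B) = Sp(E') \cup \coprod_{i=1}^{k'}(0,1)_i$, and recall from Theorem \ref{thm:CuntzCalculation} that $Cu(B)$ is identified with the semigroup of pairs $(g,b)$ with $g \in \mathrm{Lsc}([0,1],\overline{\mathbb{N}}^{k'})$, $b \in \overline{\mathbb{N}}^{l'}$, and $g(0) = Cu(\alpha_0)(b)$, $g(1) = Cu(\alpha_1)(b)$; under this identification $g_i(t)$ is the rank assigned to the representation $(t,i)$ and $b_j$ is the rank assigned to $\delta_j \in Sp(E')$. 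Since the boundary values of $g$ are forced by $b$, an element of $Cu(B)$ is completely determined by the collection of ranks it assigns to the members of $\mathrm{Sp}(B)$.

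The first step is to verify that $Cu(\phi) = Cu(\psi)$. Fix $a \in (A \otimes \mathcal{K})_+$ and $\pi \in \mathrm{Sp}(B)$. Then $\pi \circ \phi$ and $\pi \circ \psi$ are $^*$-homomorphisms of $A$ into a matrix algebra which, by hypothesis, are unitarily equivalent; hence $((\pi \circ \phi) \otimes \id)(a)$ and $((\pi \circ \psi) \otimes \id)(a)$ are unitarily equivalent positive elements, and in particular have the same rank. As this holds for every $\pi \in \mathrm{Sp}(B)$, the description of $Cu(B)$ above yields $Cu(\phi)([a]) = Cu(\psi)([a])$; since $[a]$ ranges over all of $Cu(A)$, we get $Cu(\phi) = Cu(\psi)$.

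The second step is to apply Robert's uniqueness theorem \cite{Robert12}: a $^*$-homomorphism from a $1$-dimensional NCCW complex with trivial $K_1$-group into a $C^*$-algebra of stable rank one --- in particular into another $1$-dimensional NCCW complex --- is determined up to approximate unitary equivalence by the morphism it induces on the Cuntz semigroup. Since $A$ is a $1$-dimensional NCCW complex with $K_1(A) = 0$, the equality $Cu(\phi) = Cu(\psi)$ established above forces $\phi$ and $\psi$ to be approximately unitarily equivalent. The hypothesis $K_1(A) = 0$ is precisely what allows $Cu$ to serve as a sufficient invariant here: when $K_1(A) \neq 0$ one must instead use Robert's augmented invariant $Cu^{\sim}$, which records $K_1$-data that fibrewise unitary equivalence does not see.

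I expect essentially all the difficulty to be concentrated in Robert's uniqueness theorem; the Cuntz-semigroup computation in the first step is routine. If one instead wanted a self-contained proof, the natural approach would be to build the intertwining unitary by hand: the hypothesis provides, for each $t \in (0,1)$, a unitary $u_t \in \mathcal{U}(F')$ with $u_t\,(ev_t \circ \phi)(h)\,u_t^{*} = (ev_t \circ \psi)(h)$ for all $h \in A$, together with a single $w \in \mathcal{U}(E')$ implementing the unitary equivalence of the induced maps $A \to E'$. One would then, after first replacing $\phi$ and $\psi$ by $n$-standard maps via Theorem \ref{thm:ApproxByStandardMap}, patch the $u_t$ over a sufficiently fine partition of $[0,1]$, joining consecutive choices through the unitary group of the relative commutant of $(ev_t \circ \psi)(A)$ in $F'$ --- which is path-connected, being the unitary group of a finite-dimensional $C^*$-algebra --- and force the endpoint values of the resulting path to be $\alpha_0(w)$ and $\alpha_1(w)$, so that it defines a unitary in $\mathcal{U}(\widetilde{B})$. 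The main obstacle in that route --- and the point where the one-dimensionality of $B$ and the vanishing of $K_1(A)$ genuinely enter --- is the global compatibility of these local choices with both boundary conditions at once; Robert's uniqueness theorem is exactly a packaging of that compatibility.
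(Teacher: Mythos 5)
Your proposal is correct and follows essentially the same route as the paper: reduce to the equality $Cu(\phi) = Cu(\psi)$ by comparing ranks at each $\pi \in \mathrm{Sp}(B)$ via the computation of $Cu(B)$ in Theorem \ref{thm:CuntzCalculation}, then conclude by Robert's uniqueness theorem from \cite{Robert12}. Your version is in fact slightly more careful than the paper's (working with elements of $(A \otimes \mathcal{K})_+$ rather than just $A_+$, and noting explicitly that the boundary values of $g$ are determined by $b$), and the sketched hand-built intertwining is a reasonable aside but not needed.
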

\begin{proof}
	By \cite[Theorems 1.0.1 \& 3.2.2]{Robert12}, it suffices to show that \( Cu(\phi) = Cu(\psi) \). Let \( B \coloneq B(E,F,\beta_0, \beta_1) \). For any \( h \in A_+ \) and \( \pi \in \text{Sp}(B) \), \( \pi \circ \phi(h) \) and \( \pi \circ \psi(h) \) have the same rank since \( \pi \circ \phi \) is unitarily equivalent to \( \pi \circ \psi \). Hence, by a slight abuse of notation, we can write the Cuntz semigroup elements as \( [\pi_t \circ \phi (h)] = [\pi_t \circ \psi (h)] \) for all \( t \in [0,1] \), and \( [ \bigoplus\limits_{\pi \in \text{Sp}(E)} \pi \circ \phi(h) ] = [ \bigoplus\limits_{\pi \in \text{Sp}(E)} \pi \circ \psi(h) ] \). Therefore, by the computation of \( Cu(B) \) (see Theorem \ref{thm:CuntzCalculation}), we obtain that \( Cu(\phi) = Cu(\psi) \), and the result follows
\end{proof}
Next, we extend \cite[Theorem 3]{Santiago10} using a similar line of reasoning.
\begin{theo}\label{thm:ApproxoByGoodStandardMap}
	Suppose \( A \coloneq A(E, F, \beta_0, \beta_1) \) and \( B \coloneq B(E', F', \alpha_0, \alpha_1) \) are elements of \( \mathrm{1\text{-}NCCW}_1 \) with trivial \( K_1 \)-groups, and \( \phi : A \rightarrow B \) is a unital \( n \)-standard map. Let \( (\theta, R) \) be a \( \mathcal{D} \)-pair associated with \( \phi \). Then for any \(\epsilon > 0\) and finite set \(\mathcal{F} \subset A\), there exists a unitary \( H \in B \) and a unital \( n \)-standard map \(\psi : A \rightarrow B\) such that:
	
	\begin{enumerate}
		\item \begin{equation}
			\psi_t \coloneq \mathcal{W}(t) \theta_t \mathcal{W}(t)^*,
		\end{equation}
		where \(\mathcal{W} \in C([0,1], F')\) such that \(\mathcal{W}(0)\) and \(\mathcal{W}(1)\) are permutation matrices, and
		\item \begin{equation}
			\Vert \phi(f) - H\psi(f)H^* \Vert < \epsilon \quad \text{for all } f \in \mathcal{F}.
		\end{equation}
	\end{enumerate}
\end{theo}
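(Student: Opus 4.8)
The plan is to start from the $\mathcal{D}$-pair decomposition $\phi_t = R(t)\theta_t R(t)^*$ and straighten out the piecewise-continuous unitary $R$ into an honest unitary of $B$, paying for this with an approximation. First I would record that $\theta$ is a genuine unital $^*$-homomorphism $A \to C([0,1],F')$ whose image lands in $B$, and that $R \in PC([0,1],\mathcal{U}(F'))$ has jump discontinuities only at the finitely many partition points $z_1,\dots,z_{n-1}$, with the key compatibility \eqref{eqn:perm_diag}: on each side of a jump the two diagonal forms differ by conjugation by a permutation matrix $Q_m$. The goal is to replace $R$ by a \emph{continuous} unitary path $\mathcal{W}$ that still intertwines $\theta$ with an $n$-standard map and whose endpoint values $\mathcal{W}(0),\mathcal{W}(1)$ are permutation matrices (so that $\psi_t \coloneqq \mathcal{W}(t)\theta_t\mathcal{W}(t)^*$ actually maps into $B$, i.e. respects the boundary conditions $\alpha_0,\alpha_1$), at the cost of an inner perturbation $H\in\mathcal{U}(B)$ absorbing the mismatch between $\phi$ and $\psi$.

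The key steps, in order: (1) Using continuity of the elements of the finite set $\mathcal{F}$ and the eigenvalue maps $\xi_i^{(m)}$, pick a modulus $\delta>0$ so that moving any evaluation point by less than $\delta$ changes $h$ by less than $\epsilon/(4)$ for $h\in\mathcal{F}$; this lets us thicken each partition point $z_m$ to a small interval $[z_m-\delta_m, z_m+\delta_m]$ on which we are free to homotope $R$. (2) On each such small interval, the two one-sided limits of $R$ differ (after absorbing $Q_m$) by a unitary in $\mathcal{U}(F')$ that is connected to the identity; since $\mathcal{U}(M_{f_j'}(\mathbb{C}))$ is path-connected, choose a continuous path realizing this, producing a \emph{continuous} $\mathcal{W}:[0,1]\to\mathcal{U}(F')$ which agrees with $R$ away from the thickened points and for which $\mathcal{W}(t)\theta_t\mathcal{W}(t)^*$ is still of $n$-standard form (the eigenvalue maps are only mildly reparametrized on the small intervals, keeping them in $C(I_m,\mathrm{sp}(A))$). (3) Arrange $\mathcal{W}(0)$ and $\mathcal{W}(1)$ to be permutation matrices: at the endpoints $\phi_0 = \alpha_0\circ\pi_e\circ\phi$ and $\phi_1=\alpha_1\circ\pi_e\circ\phi$ are determined by the boundary data, and by \eqref{eqn:perm_diag} with $Q_1=\id$ one can conjugate the constant endpoint unitaries into permutation matrices, homotoping $\mathcal{W}$ near $0$ and $1$ accordingly; this guarantees $\psi$ takes values in $B$. (4) Define $\psi_t\coloneqq\mathcal{W}(t)\theta_t\mathcal{W}(t)^*$; by construction $\psi$ is a unital $n$-standard map into $B$ satisfying (1), and $\pi\circ\phi$, $\pi\circ\psi$ are unitarily equivalent for every $\pi\in\mathrm{Sp}(B)$ (they have the same spectrum and same endpoint behaviour, differing only by the now-continuous conjugating unitary). (5) Invoke Lemma \ref{lem:ApproxUnitaryEqui} — here is where the triviality of $K_1(A)$ and $K_1(B)$, hence Robert's classification \cite{Robert12}, enters — to conclude $\phi$ and $\psi$ are approximately unitarily equivalent, and extract a single unitary $H\in\mathcal{U}(B)$ with $\Vert\phi(f)-H\psi(f)H^*\Vert<\epsilon$ for all $f\in\mathcal{F}$.

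I expect the main obstacle to be step (2)–(3): making the homotopy that removes the jumps of $R$ while \emph{simultaneously} keeping the conjugated diagonal in valid $n$-standard form and respecting the boundary conditions of $B$. The subtlety is that $\mathcal{W}(t)\theta_t\mathcal{W}(t)^*$ must, at each partition point and at the endpoints, still be expressible via continuous $\mathrm{sp}(A)$-valued eigenfunctions and permutation matrices $Q_t$; a careless homotopy of the unitary could destroy the block-permutation structure \eqref{eqn:perm_diag} that makes $\theta$ a $^*$-homomorphism into $C([0,1],F')$ with image in $B$. One must choose the paths of unitaries on the thickened partition intervals to be of the form "permutation times a path in the commutant of the relevant diagonal", so that conjugation only reshuffles blocks in a controlled way. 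Handling the endpoints requires additionally that the limiting diagonal matches $\alpha_0(E')$ and $\alpha_1(E')$ respectively, which is automatic from $\phi$ being a $^*$-homomorphism into $B$ but must be carried through the perturbation. Once the $n$-standard structure and boundary compatibility are preserved, the rest is a routine application of Lemma \ref{lem:ApproxUnitaryEqui}.
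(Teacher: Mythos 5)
Your endgame coincides with the paper's: define \(\psi_t = \mathcal{W}(t)\theta_t\mathcal{W}(t)^*\) for a continuous unitary path with permutation-matrix endpoints, note that \(\pi\circ\phi\) and \(\pi\circ\psi\) are unitarily equivalent fiberwise, and invoke Lemma \ref{lem:ApproxUnitaryEqui} (Robert's classification, using \(K_1=0\)) to produce \(H\). But the step you dismiss as ``automatic'' is precisely the nontrivial content, and as written it is a genuine gap. It is not enough to arrange that \(\mathcal{W}(0),\mathcal{W}(1)\) are permutation matrices and that \(\psi_0(h)\in\alpha_0(E')\) and \(\psi_1(h)\in\alpha_1(E')\) separately: under the identification \eqref{InjectiveIdentification}, \(\psi\) maps into \(B\) only if \((\psi_0(h),\psi_1(h))\in\operatorname{Im}(\alpha_0\oplus\alpha_1)\), i.e.\ a \emph{single} element of \(E'\) must hit both boundary values simultaneously, for every \(h\in A\). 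This couples the two endpoints, and a careless choice of the two endpoint permutations destroys it. The paper resolves this by diagonalizing each \(\delta_i'\circ\phi\) as \(V_i\,\diag(\cdots)\,V_i^*\) with \(V_i\in M_{e_i'}(\mathbb{C})\), comparing \(\phi_0=R(0)\theta_0R(0)^*\) with \(\phi_0=\alpha_0\bigl(\bigoplus_i c_i\bigr)\) (where \(c_i=\delta_i'\circ\phi\)) via the (A2) form to extract permutation matrices \(S_0,S_1\in F'\) satisfying \(S_0\theta_0(h)S_0^*=\alpha_0\bigl(\bigoplus_i V_i^*c_iV_i\bigr)\) and \(S_1\theta_1(h)S_1^*=\alpha_1\bigl(\bigoplus_i V_i^*c_iV_i\bigr)\), and then \emph{defining} the \(E'\)-component of \(\psi\) as \(h\mapsto\bigoplus_i V_i^*(\delta_i'\circ\phi(h))V_i\); the displayed identities are exactly the well-definedness check. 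Your proposal never defines the \(E'\)-component of \(\psi\), and consequently the hypothesis of Lemma \ref{lem:ApproxUnitaryEqui} at the representations \(\pi\in\mathrm{Sp}(E')\subset\mathrm{Sp}(B)\) (which is where \(\delta_i'\circ\psi=V_i^*(\delta_i'\circ\phi)V_i\) is used) is also unverified.

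A secondary point: your steps (1)--(2), homotoping \(R\) only on small neighbourhoods of the partition points so that \(\mathcal{W}\) stays close to \(R\) elsewhere, buy nothing, since the closeness in conclusion (ii) is in any case obtained through Lemma \ref{lem:ApproxUnitaryEqui} rather than by construction; the paper simply takes \(\mathcal{W}\) to be \emph{any} continuous path in \(\mathcal{U}(F')\) from \(S_0\) to \(S_1\). Likewise your worry about preserving the \(n\)-standard form is a non-issue: \(\theta\) is already in permutation-diagonal standard form, so conjugation by any continuous \(\mathcal{W}\) yields an \(n\)-standard map with unitaries \(\mathcal{W}(t)Q_t\). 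The work you should redirect toward the endpoint analysis above is what would close the argument.
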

\begin{proof}
	Let \(\delta_i' \in \text{Sp}(B) \cap \text{Sp}(E')\) and \((f,a) \in A\) be arbitrary. Then there exist unitaries \(V_i \in M_{e_i'}(\mathbb{C})\) such that
	\begin{equation} \label{eqn: componentdecomposition}
		\delta_i' \circ \phi(f,a) = V_i \, \diag\left( a(\delta_1) \otimes \mathbf{I}_{t_1^i}, \dots, a(\delta_l) \otimes \mathbf{I}_{t_l^i}, f(x_1^i), f(x_2^i), \ldots, f(x_{r^i}^i) \right) V_i^*.
	\end{equation}
	Using \eqref{InjectiveIdentification}, we can identify \( \phi(f,a) \) with \( \left(\phi(f,a), \bigoplus\limits_{i=1}^{l'} c_i \right) \), where \( c_i \coloneq \delta_i' \circ \phi(f,a) \).
	Note that
	\begin{eqnarray}
		\phi_0 (f,a) &=& R(0) \theta_0 (f,a) R^*(0), \label{eqn:zeroform1} \\
		&=& \alpha_0 \left( \bigoplus_{i=1}^{l'} c_i \right), \\
		&=& \left( \bigoplus_{i=1}^{l'} (c_i \otimes \mathbf{I}_{r_i}) \right) \quad \text{by \eqref{eqn:A2}}, \label{eqn:zeroform2}
	\end{eqnarray}
	Substitute the form of \(\theta_0\) from \eqref{eqn:D-map} and \(c_i\) from \eqref{eqn: componentdecomposition} into \eqref{eqn:zeroform1} and \eqref{eqn:zeroform2}, respectively. Since the spectrum of \(\phi_0\) in \eqref{eqn:zeroform1} and \eqref{eqn:zeroform2} must coincide, there exists a permutation matrix \(S_0 \in F'\) such that
	\begin{equation}\label{eqn:NCCWcondition1}
		S_0 \theta_0(f,a) S_0^* =  \left(\bigoplus_{i=1}^{l'} (V_i^* c_i V_i \otimes \mathbf{I}_{r_i})\right).
	\end{equation}
	Similarly, there exists a permutation matrix \( S_1 \in F' \) such that
	\begin{equation}\label{eqn:NCCWcondition2}
		S_1 \theta_1(f,a) S_1^* =  \left(\bigoplus_{i=1}^{l'} (V_i^* c_i V_i \otimes \mathbf{I}_{s_i})\right)
	\end{equation}
	Let \(\mathcal{W} \in C([0,1], F')\) define a continuous path of unitaries in \(F'\) connecting \(S_0\) to \(S_1\). 
	The \(^*\)-homomorphisms \(A \rightarrow C([0,1], F')\) that maps \(h \in A\) to \(\mathcal{W}\theta(h)\mathcal{W}^*\) and \(A \rightarrow E'\) that maps \(h \in A\) to \(\bigoplus\limits_{i=1}^{l'} V_i^* (\delta_i' \circ \phi(h)) V_i\) define a map \(\psi: A \rightarrow B\) satisfying
	\begin{equation}
		\psi_t \coloneq \mathcal{W}(t) \theta_t \mathcal{W}(t)^* \,\, \forall \,\,t\in [0,1].
	\end{equation}
	\(\psi(h) \in B\). \(\psi\) is a well-defined \( ^*\)-homomorphism since
	\[
	\alpha_0 \left(\bigoplus\limits_{i=1}^{l'} V_i^* c_i V_i\right) = S_0 \theta_0(h) S_0^* \quad \text{and} \quad \alpha_1 \left(\bigoplus\limits_{i=1}^{l'} V_i^* c_i V_i\right) = S_1 \theta_1(h) S_1^*
	\]
	for all \(h \in A\). By the definitions of \(\phi\) and \(\psi\), \(\pi \circ \psi\) is unitarily equivalent to \(\pi \circ \phi\) for all \(\pi \in \text{Sp}(B)\). The second part of the theorem follows from Lemma \ref{lem:ApproxUnitaryEqui}.
\end{proof}

An application of Elliott's intertwining argument gives the following:
\begin{lem}[{\cite[Corollary 2.3.3]{RS02}}]\label{lem:Isomorphism}Let \( A \coloneq \lim\limits_{\rightarrow} (A_n, \phi_n) \) and \( B \coloneq \lim\limits_{\rightarrow} (A_n, \psi_n) \) be inductive limit \( C^* \)-algebras, where each \( A_n \) is a unital \( C^* \)-algebra and the connecting maps \(\phi_n\) and \(\psi_n\) are unital \( ^* \)-homomorphisms. If each \( A_n \) is separable, and \(\phi_n\) is approximately unitarily equivalent to \(\psi_n\) for every \( n \), then \( A \cong B \). 
\end{lem}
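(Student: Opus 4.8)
The plan is to deduce the isomorphism from Elliott's approximate intertwining theorem by building an approximately commuting diagram between the two inductive systems in which the vertical maps are inner automorphisms $\alpha_n = \operatorname{Ad}(u_n)\colon A_n \to A_n$ and the diagonal maps $\beta_n\colon A_n \to A_{n+1}$ are inner conjugates of the $\phi_n$; the hypothesis that $\phi_n$ is approximately unitarily equivalent to $\psi_n$ is exactly what makes the two relevant triangles approximately commute.

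Fix tolerances $\epsilon_n > 0$ with $\sum_n \epsilon_n < \infty$, and fix countable dense sequences $(a_{m,j})_{j\ge 1}$ in each $A_m$. Using separability I would run an induction producing unitaries $u_n \in \mathcal{U}(A_n)$, with $u_1 = 1_{A_1}$, together with finite sets $\mathcal{F}_n, \mathcal{G}_n \subseteq A_n$, where at stage $n$ one chooses, in this order, first $u_n$, then $\mathcal{F}_n$, then $\mathcal{G}_n$. To produce $u_n$ (for $n \ge 2$): apply the approximate unitary equivalence of $\phi_{n-1}$ and $\psi_{n-1}$ to the finite set $u_{n-1}^{*}\mathcal{G}_{n-1}u_{n-1}$ with tolerance $\epsilon_{n-1}$; since $A_n$ and the connecting maps are unital, the implementing unitary may be taken in $A_n$ itself, producing $v_n \in \mathcal{U}(A_n)$ with $\Vert v_n\phi_{n-1}(a)v_n^{*} - \psi_{n-1}(a)\Vert < \epsilon_{n-1}$ for all $a \in u_{n-1}^{*}\mathcal{G}_{n-1}u_{n-1}$, and set $u_n \coloneq \psi_{n-1}(u_{n-1})v_n$. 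Then choose $\mathcal{F}_n \subseteq A_n$ finite, containing $\phi_{n-1}(\mathcal{F}_{n-1})$, $\phi_{n-1}(u_{n-1}^{*}\mathcal{G}_{n-1}u_{n-1})$, and $\{\phi_{m,n}(a_{m,j}) : m,j \le n\}$; and then $\mathcal{G}_n \subseteq A_n$ finite, containing $\psi_{n-1}(\mathcal{G}_{n-1})$, $u_n\mathcal{F}_n u_n^{*}$, and $\{\psi_{m,n}(a_{m,j}) : m,j \le n\}$. (Here $\phi_{m,n},\psi_{m,n}\colon A_m \to A_n$ denote the connecting maps of the two systems.) The clauses involving the $a_{m,j}$ make $\bigcup_n \phi_{n,\infty}(\mathcal{F}_n)$ dense in $A$ and $\bigcup_n \psi_{n,\infty}(\mathcal{G}_n)$ dense in $B$, while the remaining clauses yield the containments $\phi_n(\mathcal{F}_n) \subseteq \mathcal{F}_{n+1}$, $\psi_n(\mathcal{G}_n) \subseteq \mathcal{G}_{n+1}$, $\alpha_n(\mathcal{F}_n) = u_n\mathcal{F}_n u_n^{*} \subseteq \mathcal{G}_n$ and $\beta_n(\mathcal{G}_n) = \phi_n(u_n^{*}\mathcal{G}_n u_n) \subseteq \mathcal{F}_{n+1}$.

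With $\alpha_n \coloneq \operatorname{Ad}(u_n)$ and $\beta_n \coloneq \phi_n \circ \operatorname{Ad}(u_n^{*})$, the first triangle commutes exactly, $\beta_n \circ \alpha_n = \phi_n$. For the second, take $b \in \mathcal{G}_n$ and set $a \coloneq u_n^{*}bu_n \in u_n^{*}\mathcal{G}_n u_n$; then
\begin{equation*}
\alpha_{n+1}\bigl(\beta_n(b)\bigr) = u_{n+1}\,\phi_n(a)\,u_{n+1}^{*} = \psi_n(u_n)\bigl(v_{n+1}\,\phi_n(a)\,v_{n+1}^{*}\bigr)\psi_n(u_n)^{*},
\end{equation*}
which, since conjugation by the unitary $\psi_n(u_n)$ is isometric and $\Vert v_{n+1}\phi_n(a)v_{n+1}^{*} - \psi_n(a)\Vert < \epsilon_n$, lies within $\epsilon_n$ of $\psi_n(u_n)\psi_n(a)\psi_n(u_n)^{*} = \psi_n(u_n a u_n^{*}) = \psi_n(b)$. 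Hence $\Vert \alpha_{n+1}\beta_n(b) - \psi_n(b)\Vert < \epsilon_n$ for all $b \in \mathcal{G}_n$. The data $(\alpha_n)$, $(\beta_n)$, $(\mathcal{F}_n)$, $(\mathcal{G}_n)$, $(\epsilon_n)$ is thus an approximate intertwining of $(A_n, \phi_n)$ with $(A_n, \psi_n)$ in the sense of Elliott, so \cite[Corollary 2.3.3]{RS02} produces a $^{*}$-isomorphism $A \cong B$ (the limit of the $\alpha_n$, with inverse the limit of the $\beta_n$).

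I expect the scheduling of the induction to be the one point needing real care: the containment $\alpha_n(\mathcal{F}_n) \subseteq \mathcal{G}_n$ forces $\mathcal{G}_n$ to be chosen after both $u_n$ and $\mathcal{F}_n$, and one must verify that none of the required containments is circular while all of the sets remain finite — precisely the bookkeeping that the abstract intertwining theorem is designed to absorb. Everything else is formal; note in particular that the $K_1$- and NCCW-hypotheses appearing elsewhere in the paper play no role here, their effect being confined to supplying, via Lemma \ref{lem:ApproxUnitaryEqui}, the approximate unitary equivalences that are taken as hypotheses in the present statement.
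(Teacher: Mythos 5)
Your proposal is correct and takes essentially the same route as the paper: the paper simply quotes this statement as \cite[Corollary 2.3.3]{RS02}, an application of Elliott's approximate intertwining argument, and your inductive choice of unitaries $u_n=\psi_{n-1}(u_{n-1})v_n$ and finite sets $\mathcal{F}_n,\mathcal{G}_n$ is exactly the standard proof of that corollary, with the triangle estimates and density/containment conditions verified correctly. The only slip is bibliographic: the abstract result you invoke at the end is Elliott's approximate intertwining theorem (Proposition 2.3.2 in \cite{RS02}), not Corollary 2.3.3, which is the statement being proved.
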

\begin{theo}[{\cite[Theorem 1.10]{Li20}}]\label{thm:InductiveLimitofDiagonals}
	For each \( n \), let \( B_n \) be a \( C^* \)-diagonal of a \( C^* \)-algebra \( A_n \), \( N_n \coloneq N_{A_n}(B_n) \) the normalizer of \( B_n \) in \( A_n \), and \( P_n : A_n \rightarrow B_n \) the faithful conditional expectation of \( A_n \) onto \( B_n \). If \( \phi_n : A_n \rightarrow A_{n+1} \) is an injective \( ^* \)-homomorphism satisfying \( \phi_n(B_n)\subset B_{n+1} \), \( \phi_n (N_n)\subset N_{n+1} \), and \( P_{n+1}\circ \phi_n = \phi_n \circ P_n \) for all \( n \), then \( \lim\limits_{\rightarrow} (B_n, \phi_n) \) is a \( C^* \)-diagonal of \( \lim\limits_{\rightarrow} (A_n, \phi_n) \).
\end{theo}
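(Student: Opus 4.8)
The plan is to realize both inductive limits concretely and then check, one at a time, the four defining properties of a $C^*$-diagonal for the resulting inclusion. Since every $\phi_n$ is injective, I would identify $A_n$ with its image in $A\coloneqq\varinjlim(A_n,\phi_n)$ and $B_n$ with its image in $B\coloneqq\varinjlim(B_n,\phi_n)$, so that $A=\overline{\bigcup_n A_n}$, $B=\overline{\bigcup_n B_n}$, and $B\subseteq A$. It is immediate that $B$ is abelian (a closure of an increasing union of abelian $C^*$-algebras) and, since each $A_n$ is unital, that $1_A\in B$. It then remains to establish: (i) $B$ is regular in $A$; (ii) there is a faithful conditional expectation $A\to B$; (iii) $(B,A)$ has the unique extension property; and, finally, that $B$ is maximal abelian, which I would deduce from (iii) at the end.

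Regularity is soft: the hypothesis $\phi_n(N_n)\subseteq N_{n+1}$ shows that an element of $N_m$ normalizes $B_{m'}$ for all $m'\ge m$, hence normalizes $B=\overline{\bigcup_{m'}B_{m'}}$, so $\bigcup_m N_m\subseteq N_A(B)$; combining this with $A_m=C^*(N_m)$ and density of $\bigcup_m A_m$ in $A$ gives $A=C^*(N_A(B))$. For the conditional expectation, the relation $P_{n+1}\circ\phi_n=\phi_n\circ P_n$ says exactly that $(P_n)$ is a morphism of inductive systems; it therefore induces a contractive map $P\colon A\to B$ with $P|_{A_n}=P_n$, and idempotence, positivity, the $B$-bimodule identity and $\Vert P\Vert=1$ all pass to the limit, so $P$ is a conditional expectation onto $B$. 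Faithfulness of $P$ is the only point here that is not purely formal: if $P(a)=0$ for some $a\in A_+$, then operator Cauchy--Schwarz for the completely positive map $P$, applied to $a=a^{1/2}a^{1/2}$, forces $P$ to annihilate the hereditary subalgebra $\overline{a^{1/2}Aa^{1/2}}$; I would then argue, using a R\o rdam-type comparison (approximate $a$ by a positive $b\in A_n$ and chop, obtaining a nonzero positive element of $A_n$ Cuntz-subordinate to $a$) together with the fact that $P$ restricts to the faithful expectation $P_n$ on each $A_n$, that this hereditary subalgebra must vanish, hence $a=0$.

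The substantive step is the unique extension property. Here I would invoke the fact from Renault's and Kumjian's theory that for a $C^*$-diagonal $(B_m,A_m)$ a character $\chi$ of $B_m$ has a \emph{unique} extension to a state of $A_m$, namely $\chi\circ P_m$, and that this state is pure. Given a character $\chi$ of $B$, set $\tilde\chi\coloneqq\chi\circ P$; because $P|_{A_m}=P_m$ we have $\tilde\chi|_{A_m}=(\chi|_{B_m})\circ P_m$, which is pure for every $m$, whence $\tilde\chi$ itself is pure (a convex decomposition of $\tilde\chi$ restricts on each $A_m$ to a trivial decomposition, so is trivial, using density of $\bigcup_m A_m$). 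For uniqueness, any state $\rho$ of $A$ with $\rho|_B=\chi$ restricts on each $A_m$ to a state extending $\chi|_{B_m}$, which by the cited fact equals $(\chi|_{B_m})\circ P_m=\tilde\chi|_{A_m}$; density then forces $\rho=\tilde\chi$. Thus every character of $B$ has a unique state extension, in particular a unique pure-state extension, so $(B,A)$ has the unique extension property. Finally, $B$ is maximal abelian: were $B\subsetneq B'$ for an abelian $C^*$-subalgebra $B'\subseteq A$, the surjection $\widehat{B'}\twoheadrightarrow\widehat{B}$ would fail to be injective, producing a character of $B$ with two distinct extensions to characters of $B'$ and hence, extending further, two distinct pure-state extensions to $A$, contradicting the unique extension property. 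Assembling (i)--(iii) and maximality shows $B$ is a $C^*$-diagonal of $A$.

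I expect the main obstacle to be the ingredient used at the start of the third paragraph: upgrading ``unique among pure-state extensions'' (the defining property at each finite stage) to ``unique among \emph{all} state extensions'', which is what makes the inductive-limit argument close — without it, the restriction to $A_m$ of a pure-state extension of $\chi$ need not be pure and the uniqueness step stalls. A more hands-on alternative would be to carry the twist data, equivalently the free-normalizer structure, of each pair $(B_m,A_m)$ along the connecting maps and verify that principality of the associated groupoid is preserved in the limit.
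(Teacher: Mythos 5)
You should first note that the paper does not prove this statement at all: it is quoted as \cite[Theorem 1.10]{Li20} and used as a black box, so your argument can only be compared with the cited source and judged on its own merits. In the unital setting relevant here, most of your outline is sound: regularity of \(B\) in \(A\), the construction of the limit expectation \(P\) with \(P|_{A_n}=P_n\), the unique extension property via \(\chi\circ P\), and maximal abelianness deduced from the unique extension property are all correct. Moreover, the step you single out as the expected main obstacle is not one: for a pure state \(\chi\) of \(B_m\), the set of \emph{all} state extensions to \(A_m\) is a weak\(^*\)-closed face of the state space, so its extreme points are pure; if the pure extension is unique, Krein--Milman makes the whole face a singleton, which is exactly the upgrade from ``unique pure-state extension'' to ``unique state extension, equal to \(\chi\circ P_m\)''.

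The genuine gap is in the faithfulness of \(P\). From \(P(a)=0\), \(a\ge 0\), you correctly conclude that \(P\) vanishes on \(\mathrm{Her}(a)=\overline{a^{1/2}Aa^{1/2}}\); but the nonzero positive element of \(A_n\) produced by approximating and chopping is of the form \((b-\varepsilon)_+ = d\,a\,d^*\) for some contraction \(d\in A\) (R{\o}rdam's lemma), i.e.\ it is only Cuntz-subordinate to \(a\) and lies in the ideal generated by \(a\), not in \(\mathrm{Her}(a)\). Since a conditional expectation is not tracial, its positive kernel cone is hereditary but not invariant under \(x\mapsto dxd^*\), so \(P(a)=0\) does not give \(P(dad^*)=0\), and the intended contradiction with faithfulness of \(P_n\) does not follow as written. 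To close this you must bring the normalizer hypotheses back in: for \(u\in N_m\) one has \(P_{m'}(uxu^*)=uP_{m'}(x)u^*\) for \(m'\ge m\) (a standard identity for the canonical expectation of a Cartan pair), hence by density \(P(uau^*)=uP(a)u^*=0\) for every \(u\in\bigcup_m N_m\); Cauchy--Schwarz for the \(B\)-valued form \((x,y)\mapsto P(x^*y)\) then gives \(P(uav^*)=0\) for all normalizers \(u,v\), and since the linear span of \(\bigcup_m N_m\) is dense in \(A\) (regularity at each stage) one gets \(P(dad^*)=0\) for \emph{all} \(d\in A\). Only now does the chop yield a nonzero \(c\in (A_n)_+\) with \(P_n(c)=P(dad^*)=0\), contradicting faithfulness of \(P_n\). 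A smaller caveat: the statement is not restricted to unital algebras, whereas your argument uses \(1\in B\) repeatedly (restriction of states, the face argument, the Gelfand/Stone--Weierstrass step for maximality); this is harmless for the application in this paper, where everything is unital, but should be said explicitly.
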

We are now ready to prove the main result of this paper.
\begin{theo}\label{thm:Mainresult}
	Let $A \coloneq \lim\limits_{\rightarrow} (A_n, \phi_n)$ be a unital, separable $C^*$-algebra, where each $A_n \in \text{1-NCCW}_1$ is a 1-dimensional NCCW complex with trivial $K_1$-groups, and the connecting maps $\phi_n$ are unital and injective. Then $A$ has a $C^*$-diagonal.
\end{theo}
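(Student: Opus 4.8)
The plan is to represent $A$ as an inductive limit whose connecting maps are \emph{good} $n$-standard maps carrying a coherent sequence of $C^*$-diagonals into one another, and then invoke the inductive-limit theorem for $C^*$-diagonals (Theorem \ref{thm:InductiveLimitofDiagonals}). For the first reduction I would use Theorem \ref{thm:ApproxByStandardMap} to approximate each $\phi_n$ by a unital injective $n$-standard map on larger and larger finite subsets and then, together with the semiprojectivity of $1$-NCCW complexes and Elliott's approximate-intertwining argument, pass to an inductive system isomorphic to $(A_n,\phi_n)$ in which every connecting map is a unital injective $n$-standard map; relabel it $(A_n,\phi_n)$.

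Next I would upgrade to good standard maps. For each $n$ fix a $\mathcal{D}$-pair $(\theta_n,R_n)$ for $\phi_n$ as in Remark \ref{rmk:D-map}, choose finite sets $\mathcal{F}_n\subset A_n$ exhausting a dense $^*$-subalgebra and $\epsilon_n\to 0$, and apply Theorem \ref{thm:ApproxoByGoodStandardMap} (where the hypothesis $K_1(A_n)=0$ enters) to obtain a unitary $H_n\in A_{n+1}$ and a good $n$-standard map $\psi_n=\mathcal{W}_n\theta_n\mathcal{W}_n^{*}$ with $\mathcal{W}_n(0),\mathcal{W}_n(1)$ permutation matrices and $\|\phi_n(f)-H_n\psi_n(f)H_n^{*}\|<\epsilon_n$ for $f\in\mathcal{F}_n$. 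Since $\pi\circ\psi_n$ is unitarily equivalent to $\pi\circ\phi_n$ for every $\pi\in\mathrm{Sp}(A_{n+1})$, Lemma \ref{lem:ApproxUnitaryEqui} gives that $\psi_n$ and $\phi_n$ are approximately unitarily equivalent; in particular $\psi_n$ is injective. By Lemma \ref{lem:Isomorphism}, $\varinjlim(A_n,\psi_n)\cong\varinjlim(A_n,\phi_n)=A$, so it suffices to exhibit a $C^*$-diagonal of $\varinjlim(A_n,\psi_n)$.

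I would then construct $C^*$-diagonals $B_n\subseteq A_n$ with normalizer sets $N_n=N_{A_n}(B_n)$ and faithful conditional expectations $P_n\colon A_n\to B_n$ such that $\psi_n(B_n)\subseteq B_{n+1}$, $\psi_n(N_n)\subseteq N_{n+1}$ and $P_{n+1}\circ\psi_n=\psi_n\circ P_n$ for all $n$. The starting point is the standard diagonal $B_n^{0}=\{(f,a)\in A_n: f(t)\in\bigoplus_j D_{f_j}(\mathbb{C})\ \forall t\}$ of Proposition \ref{prop:C*-diagonal}; condition (A1) forces $a$ to be block-diagonal as well. Each constituent of $\theta_n$ is an evaluation $h\mapsto h\circ\xi_i^{(m)}(t)$ at a point of $\mathrm{sp}(A_n)$ followed by conjugation by a permutation matrix $Q_t$, and such maps carry block-diagonal elements to block-diagonal elements, carry the normalizers of $B_n^{0}$ (the block-monomial elements) to normalizers, and intertwine the canonical conditional expectations; hence $\theta_n$ does so too. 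The twist $\mathcal{W}_n$ has permutation-matrix values at the endpoints, which normalize the block-diagonal subalgebra of $F_{n+1}$, so I would use $\mathcal{W}_n$ to transport the target diagonal, defining $B_{n+1}$ as a suitable $\mathcal{W}_n$-conjugate of $B_{n+1}^{0}$, checking that it is again a $C^*$-subalgebra of $A_{n+1}$ with the correct boundary behaviour and hence a $C^*$-diagonal, and propagating this choice coherently through the system (shrinking $\mathcal{F}_n,\epsilon_n$ as needed). With these data, Theorem \ref{thm:InductiveLimitofDiagonals} applied to the injective system $(A_n,\psi_n)$ shows that $\varinjlim(B_n,\psi_n)$ is a $C^*$-diagonal of $\varinjlim(A_n,\psi_n)\cong A$, completing the proof.

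I expect the crux to be this last step: a single twist $\mathcal{W}_n$ need not normalize the standard diagonal, and the NCCW boundary identifications are rigid, so one cannot simply take $B_n=B_n^{0}$ for every $n$; reconciling the $\mathcal{D}$-pair conjugations with the requirement that each $B_n$ be a genuine $C^*$-diagonal of $A_n$ and that the maps $\psi_n$ respect normalizers and conditional expectations simultaneously is where the permutation-matrix structure of $\mathcal{W}_n(0),\mathcal{W}_n(1)$ and the explicit form of the $\mathcal{D}$-pair from Remark \ref{rmk:D-map} must be exploited with care.
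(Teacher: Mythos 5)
Your outline coincides with the paper's proof through the reduction to good standard maps: approximate each $\phi_n$ by an injective unital standard map (Theorem \ref{thm:ApproxByStandardMap}), upgrade via Theorem \ref{thm:ApproxoByGoodStandardMap} and Lemma \ref{lem:ApproxUnitaryEqui} to $\psi_n=\mathcal{W}_{n+1}\theta_n\mathcal{W}_{n+1}^{*}$ with $\mathcal{W}_{n+1}(0),\mathcal{W}_{n+1}(1)$ permutation matrices, and conclude $A\cong\varinjlim(A_n,\psi_n)$ by Lemma \ref{lem:Isomorphism}. The gap is exactly where you flagged it, and it is genuine: the last step is left as a plan ("define $B_{n+1}$ as a suitable $\mathcal{W}_n$-conjugate of $B_{n+1}^{0}$ \dots\ propagating this choice coherently") rather than a construction. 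Two concrete obstructions stand in the way of that plan. First, $\mathcal{W}_{n+1}B_{n+1}^{0}\mathcal{W}_{n+1}^{*}$ need not be a subalgebra of $A_{n+1}$ at all: for $(f,a)\in B_{n+1}^{0}$ the conjugated function has boundary values $S_t\,\alpha_t(a)\,S_t^{*}$ ($S_t=\mathcal{W}_{n+1}(t)$, $t=0,1$), which generally do not lie in $\operatorname{Im}(\alpha_t)$, so the candidate diagonal violates the NCCW boundary identifications. Second, even granting a fix at stage $n+1$, the next connecting map $\psi_{n+1}$ is built from a $\mathcal{D}$-pair adapted to the \emph{standard} coordinates of $A_{n+1}$, so one must track how $\theta_{n+1}$ acts on the accumulated twist $\mathcal{W}_{n+1}$, which is not an element of $A_{n+1}$ and whose image under $\theta_{n+1}$ is only piecewise continuous.

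The paper closes both issues by following \cite[Remark 4.1]{Li20}: rather than conjugating the diagonals inside the fixed algebras $A_n$, it replaces each $A_n$ by an isomorphic $1$-NCCW complex $\overline{A}_n=A(E_n,F_n,\alpha_0^n,\alpha_1^n)$ whose boundary maps are twisted, $\alpha_t^n=\mathsf{V}_n(t)\beta_t^n\mathsf{V}_n(t)^{*}$, where the unitaries $\mathsf{V}_n\in C([0,1],F_n)$ are defined by the recursion $\mathsf{V}_{n+1}=C_{n+1}\theta_n(\mathsf{V}_n,\I)Z_{n+1}^{*}$, with $C_{n+1}$ a piecewise continuous permutation-valued correction making $C_{n+1}\theta_n(\mathsf{V}_n,\I)$ continuous, and $Z_{n+1}$ a continuous unitary agreeing with $\mathcal{W}_{n+1}$ at the endpoints and with $C_{n+1}\theta_n(\mathsf{V}_n,\I)$ on the finite exceptional spectra $\Omega_n(0)\cup\Omega_n(1)\cup\Sigma_n$. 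After this change of coordinates the connecting maps become $\widehat{\psi}_n$ whose interval part is $\overline{\theta}_n=C_{n+1}\theta_n C_{n+1}^{*}$, a conjugate of the evaluation-plus-permutation map $\theta_n$ by a permutation-valued function; these visibly preserve the \emph{standard} diagonals $\widehat{B}_n\subset\overline{A}_n$, their normalizers, and the canonical expectations, so Theorem \ref{thm:InductiveLimitofDiagonals} applies. So your strategy is the right one, but the recursive modification of the building blocks themselves (not just of the diagonals inside them) is the missing ingredient that makes the coherence argument work.
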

\begin{proof}
	Let \( A_n \coloneq A_n(E_n, F_n, \beta_{0}^n, \beta_{1}^n) \). For any \( \epsilon > 0 \), \( n \), and finite set \( \mathcal{F} \subset A_n \), it follows from Theorem \ref{thm:ApproxByStandardMap} that there exists an injective unital \( m_n \)-standard map \( \zeta_n : A_n \rightarrow A_{n+1} \) such that
	\begin{equation}\label{eqn:ApproxByStandard}
		\Vert \phi_n (a) - \zeta_n (a)\Vert < \frac{\epsilon}{2}\,,\,\,\, \text{for all}\,\,\,\, a \in \mathcal{F}.
	\end{equation}
	Let \( (\theta_n, R_n) \) be a \( \mathcal{D} \)-pair associated with \( \zeta_n \). By Theorem \ref{thm:ApproxoByGoodStandardMap}, there exists an injective unital \( m_n \)-standard map \( \psi_n: A_n \rightarrow A_{n+1} \) described by \( \mathcal{W}_{n+1}\theta_n \mathcal{W}_{n+1}^{*}: A_n \rightarrow C([0,1], F_{n+1}) \) and \( \gamma_n \coloneq \pi_e \circ \psi_n : A_n \rightarrow E_{n+1} \)
	, where \( \mathcal{W}_{n+1} \in C([0,1], F_{n+1}) \) with \( \mathcal{W}_{n+1}(0) \) and \( \mathcal{W}_{n+1}(1) \) being permutation matrices. Moreover,
	\begin{equation}\label{eqn:ApproxByGoodStandard}
		\Vert \zeta_n (a) - H_n \psi_n (a) H_n^* \Vert < \frac{\epsilon}{2}
	\end{equation}
	for some unitaries \( H_n \in A_{n+1} \) and all \(a\in \mathcal{F}\). Combining \eqref{eqn:ApproxByStandard} and \eqref{eqn:ApproxByGoodStandard}, we conclude that \( \Vert \phi_n (a) - H_n \psi_n (a) H_n^* \Vert < \epsilon \) for all \( a \in \mathcal{F} \). Hence, \( \phi_n \) is approximately unitarily equivalent to an \( m_n \)-standard map \(\psi_n\). Since 1-dimensional NCCW complexes are finitely generated and separable \cite[Lemma 2.3]{Eilers98}, we have that \( A \cong \lim\limits_{\rightarrow} (A_n , \psi_n) \) by Lemma \ref{lem:Isomorphism}.
	
	We adopt a slight modification of the technique described in \cite[Remark 4.1]{Li20} to further transform \( A \) into an inductive limit \( C^* \)-algebra with connecting \( ^* \)-homomorphisms that preserve the \( C^* \)-diagonals of each building block.
	
	
	Regard \( \theta_n : C([0,1],F_n) \oplus E_n \rightarrow PC([0,1],F_{n+1}) \) as an extension of \( \theta_n : A_n \rightarrow C([0,1],F_{n+1}) \), where the dividing interval is considered as \( [z_i, z_{i+1}) \) instead of \( [z_i, z_{i+1}] \). Then \( \theta \) is a \( ^* \)-algebra homomorphism, and a \( C^* \)-homomorphism on each dividing interval. For each \( n \) and \( s \in [0,1] \), let
	\begin{enumerate}
		\item \(\Omega_n(s) \subset (0,1)\) be the finite set \( \text{Sp}(\pi_s \circ \psi_n) \cap \coprod\limits_{i=1}^{k}(0,1)_i \), ignoring multiplicity and index.
		\item \(\Sigma_n \subset (0,1)\) be the finite set \( \text{Sp}(\pi_e \circ \psi_n) \cap \coprod\limits_{i=1}^{k}(0,1)_i \), ignoring multiplicity and index.
	\end{enumerate}
	We proceed by selecting a unitary \( Z_{n+1} \in \mathcal{U}(C([0,1], F_{n+1})) \), a unitary \( \mathsf{V}_n \in \mathcal{U}(C([0,1], F_n)) \), and a piecewise continuous function of permutation matrices \( C_{n+1} \in PC([0,1], F_{n+1}) \) such that, for all \( n \), \( \mathsf{V}_n(s) = 1 \) for all \( s \in (\Omega_n(0) \cup \Omega_n(1) \cup \Sigma_n) \), \( \mathsf{V}_n(t) \) is a permutation matrix for each \( t \in \{0,1\} \), \( Z_{n+1}(t) = \mathcal{W}_{n+1}(t) \) for \( t \in \{0, 1\} \), and
	\[
	\mathsf{V}_{n+1} \coloneq C_{n+1} \theta_n(\mathsf{V}_n, \I_n) Z_{n+1}^{*},
	\]
	as follows.
	
	Define \( \mathsf{V}_1 := 1 \). If \( \mathsf{V}_n \) and \( Z_n \) have been chosen, then \( \theta_n(\mathsf{V}_n, \I_n) \) may not be in \( \mathcal{U}(C([0,1], F_{n+1})) \). However, \( \mathsf{V}_n(t) \) being a permutation matrix for each \( t = 0,1 \) ensures the existence of a piecewise continuous function of permutation matrices \( C_{n+1} \in PC([0,1],F_{n+1}) \) such that \( C_{n+1}\theta_n(\mathsf{V}_n, \I_n) \) is unitary. Pick \( Z_{n+1} \in \mathcal{U}(C([0,1], F_{n+1})) \) such that \( Z_{n+1}(t) \coloneq \mathcal{W}_{n+1}(t) \) for \( t \in \{0,1\} \), and
	\[
	Z_{n+1}(s) \coloneq C_{n+1}(s) \theta_n(\mathsf{V}_n, \I_n)(s) \quad \forall \, s \in (\Omega_n(0) \cup \Omega_n(1) \cup \Sigma_n).
	\]
	Moreover, set  
	\begin{equation}\label{eqn:PermutationConfirmation}
		\mathsf{V}_{n+1} \coloneq C_{n+1} \theta_n(\mathsf{V}_n, \I_n) Z_{n+1}^{*}
	\end{equation}
	and
	\begin{equation}
		\overline{A}_n \coloneq A(E_n, F_n, \alpha_0^n, \alpha_1^n),
	\end{equation}
	where \( \alpha_t^n \coloneq \mathsf{V}_n(t) \beta_t^n \mathsf{V}_n^{*}(t) \), and \( t = \{0,1\} \). Let \( \overline{\psi}_n: A_n \rightarrow A_{n+1} \) be a \( ^* \)-homomorphism defined by the \( ^* \)-homomorphisms \( \gamma_n \) and \( Z_{n+1} \theta_n Z_{n+1}^{*}: A_n \rightarrow C([0,1], F_{n+1}) \in C([0,1], F_{n+1})  \). 
	
	If \( (f,a) \in \overline{A}_n \), then \( (\mathsf{V}_n^* f \mathsf{V}_n, a) \in A_n \). Moreover, if \( (f,a) \in \overline{A}_n \), we have
	\begin{equation}
		C_{n+1} \theta_n(f,a) C_{n+1}^* 
		= C_{n+1} \theta_n(\mathsf{V}_n, \I_n) \, \theta_n(\mathsf{V}_n^* f \mathsf{V}_n, a) \, \theta_n(\mathsf{V}_n^*, \I_n) C_{n+1}^*
	\end{equation}
	which lies in \( C([0,1], F_{n+1}) \), since \( \theta_n \) is a \( ^* \)-algebra homomorphism, and both \( C_{n+1} \theta_n(\mathsf{V}_n, \I_n) \) and \( \theta_n(\mathsf{V}_n^* f \mathsf{V}_n, a) \) are elements of \( C([0,1], F_{n+1}) \). 
	
	Consequently, the map
	\[
	\overline{\theta}_n \coloneq C_{n+1} \theta_n C_{n+1}^* : \overline{A}_n \rightarrow C([0,1], F_{n+1})
	\]
	is a \( ^* \)-homomorphism.
	
	Let \( \widehat{\psi}_n : \overline{A}_n \rightarrow \overline{A}_{n+1} \) be the \( ^* \)-homomorphism defined by the maps \( \overline{\theta}_n \) and \( \overline{\gamma}_n : \overline{A}_n \rightarrow E_{n+1} \), where \( \overline{\gamma}_n(f, a) = \gamma_n(\mathsf{V}_n^* f \mathsf{V}_n, a) \).
	
	
	

	Using \eqref{eqn:PermutationConfirmation}, we obtain the following commutative diagram:
	\begin{center}

		\tikzset{every picture/.style={line width=0.75pt}} 
		
		\begin{tikzpicture}[x=0.75pt,y=0.75pt,yscale=-1,xscale=1]
			
			\draw    (301.67,97.33) -- (383.67,98.31) ;
			\draw [shift={(385.67,98.33)}, rotate = 180.68] [color={rgb, 255:red, 0; green, 0; blue, 0 }  ][line width=0.75]    (8.74,-2.63) .. controls (5.56,-1.12) and (2.65,-0.24) .. (0,0) .. controls (2.65,0.24) and (5.56,1.12) .. (8.74,2.63)   ;
			\draw    (280.09,113.64) -- (280.2,160.6) ;
			\draw [shift={(280.2,162.6)}, rotate = 269.87] [color={rgb, 255:red, 0; green, 0; blue, 0 }  ][line width=0.75]    (8.74,-2.63) .. controls (5.56,-1.12) and (2.65,-0.24) .. (0,0) .. controls (2.65,0.24) and (5.56,1.12) .. (8.74,2.63)   ;
			\draw    (290,178.6) -- (383,178.6) ;
			\draw [shift={(385,178.6)}, rotate = 180] [color={rgb, 255:red, 0; green, 0; blue, 0 }  ][line width=0.75]    (8.74,-2.63) .. controls (5.56,-1.12) and (2.65,-0.24) .. (0,0) .. controls (2.65,0.24) and (5.56,1.12) .. (8.74,2.63)   ;
			\draw    (407.09,113.64) -- (407.2,160.6) ;
			\draw [shift={(407.2,162.6)}, rotate = 269.87] [color={rgb, 255:red, 0; green, 0; blue, 0 }  ][line width=0.75]    (8.74,-2.63) .. controls (5.56,-1.12) and (2.65,-0.24) .. (0,0) .. controls (2.65,0.24) and (5.56,1.12) .. (8.74,2.63)   ;
			
			\draw (122,147) node [anchor=north west][inner sep=0.75pt]   [align=left] {$ $};
			\draw (390.98,86.48) node [anchor=north west][inner sep=0.75pt]    {$A_{n}{}_{+}{}_{1}$};
			\draw (268.64,168.68) node [anchor=north west][inner sep=0.75pt]    {$\overline{A}_{n}$};
			\draw (391.27,170.92) node [anchor=north west][inner sep=0.75pt]    {$\overline{A}_{n+1}$};
			\draw (271.96,88.97) node [anchor=north west][inner sep=0.75pt]    {$A_{n}$};
			\draw (183.41,129.5) node [anchor=north west][inner sep=0.75pt]    {$Ad( 	\mathsf{V}_{n}) \oplus \id\ $};
			\draw (415.27,126.92) node [anchor=north west][inner sep=0.75pt]    {$Ad( 	\mathsf{V}_{n+1}) \oplus \id\ $};
			\draw (333.72,187.54) node [anchor=north west][inner sep=0.75pt]    {$\hat{\psi }_{n}$};
			\draw (324.44,65.6) node [anchor=north west][inner sep=0.75pt]    {$\overline{\psi }_{n}$};

		\end{tikzpicture}
	\end{center}
	
	Notice that \(\overline{A}_n\) satisfies conditions (A1) and (A2). Using a similar argument as in the beginning of the proof and the commutative diagram above, we conclude that 
	\[
	A \cong \lim_{\rightarrow}(A_n, \psi_n) \cong \lim_{\rightarrow}(A_n, \overline{\psi}_n) \cong \lim_{\rightarrow}(\overline{A}_n, \widehat{\psi}_n).
	\]
	Now, we show the existence of a \(C^*\)-diagonal of \(A\). Let 
	\[
	\widehat{B}_n \coloneq \{ (f,a) \in \widehat{A}_n : f(t) \in DF_n \,\, \forall \,\, t \in [0,1] \},
	\]
	where \(DF_n\) is the set of diagonal matrices of \(F_n\). Recall that \(\widehat{B}_n\) is a \(C^*\)-diagonal of \(\overline{A}_n\) for all \(n\) by Proposition \ref{prop:C*-diagonal}. Using the definition of \(\widehat{\psi}_n\) and \(\theta_n\) (see \eqref{eqn:D-map}), it is evident that
	\begin{itemize}
		\item \(\widehat{\psi}_n (\widehat{B}_n) \subset \widehat{B}_{n+1}\) for all \(n\).
		\item \(\widehat{\psi}_n (N_n) \subset N_{n+1}\) since the normalizers of \(\widehat{B}_n\) are of the form \((f,a)\), where \(f(t)\) is a generalized permutation matrix.
		\item \(P_{n+1} \circ \widehat{\psi}_n = \widehat{\psi}_n \circ P_n\), where \(P_n\) is the canonical conditional expectation.
	\end{itemize}
	By Theorem \ref{thm:InductiveLimitofDiagonals}, we conclude that \(\lim\limits_{\rightarrow}(\widehat{B}_n, \widehat{\psi}_n)\) is a \(C^*\)-diagonal of \(A\).

	
\end{proof} 
\begin{cor}\label{cor:InductiveLimit}
	Let $A \coloneq \lim\limits_{\rightarrow} (A_n, \phi_n)$ be a unital, separable $C^*$-algebra, where each $A_n \in \text{1-NCCW}_1$ is a 1-dimensional NCCW complex with trivial $K_1$-groups, and the connecting maps $\phi_n$ are unital and injective. Then there exists an inductive system $(B_k, \varphi_k)$, where each $B_k \in \mathrm{1\text{-}NCCW}_1$, and each connecting map $\varphi_k: B_k \to B_{k+1}$ is unital, injective, and $n_k$-standard for some $n_k$, such that
	\(
	A \cong \varinjlim (B_k, \varphi_k)\).
\end{cor}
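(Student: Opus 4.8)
### Proof proposal

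The plan is to extract the corollary directly from the proof of Theorem~\ref{thm:Mainresult}, since essentially all the work has already been done there. The key observation is that in that proof we constructed, starting from the given system $(A_n,\phi_n)$, a new inductive system $(\overline{A}_n,\widehat{\psi}_n)$ with $A\cong\varinjlim(\overline{A}_n,\widehat{\psi}_n)$, and that each $\widehat{\psi}_n$ was obtained from an $m_n$-standard map by conjugating with a unitary and composing with an inner automorphism of the target. So the first step is to record that $\overline{A}_n\in\text{1-NCCW}_1$ (it satisfies (A1) and (A2), as noted at the end of that proof), and to set $B_k\coloneqq\overline{A}_k$. The remaining task is to check that $\widehat{\psi}_n$ is itself $n_k$-standard for a suitable $n_k$, not merely approximately unitarily equivalent to one.

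Next I would verify the standardness of $\widehat{\psi}_n$ by unwinding its definition. Recall $\widehat{\psi}_n$ is given by the pair $(\overline{\theta}_n,\overline{\gamma}_n)$, where $\overline{\theta}_n=C_{n+1}\theta_n C_{n+1}^*$ with $\theta_n$ the $\mathcal{D}$-pair $^*$-homomorphism of the $m_n$-standard map $\zeta_n$, $C_{n+1}\in PC([0,1],F_{n+1})$ a piecewise-continuous field of permutation matrices, and $\overline{\gamma}_n(f,a)=\gamma_n(\mathsf{V}_n^*f\mathsf{V}_n,a)$. On each dividing subinterval $I_m$, $\theta_n$ has the block-diagonal form $Q_t\,\diag(h\circ\xi_1^{(m)}(t),\dots,h\circ\xi_{k(m)}^{(m)}(t))\,Q_t^*$ of \eqref{eqn:D-map}; since $C_{n+1}$ is piecewise a permutation matrix (and one may refine the partition so that $C_{n+1}$ is constant on each piece), $\overline{\theta}_n$ still has a block-diagonal form with permutation-matrix conjugators and eigenvalue maps $\xi_i^{(m)}\in C(I_m,\mathrm{sp}(A_n))$, hence is an $n'$-standard map into $C([0,1],F_{n+1})$ for $n'$ the common refinement of the partitions of $C_{n+1}$ and of $\theta_n$. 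The inner-automorphism part $Ad(\mathsf{V}_n)$ on $\overline{A}_n$ only relabels the source but leaves the standard structure intact, because precomposing $h\circ\xi_i^{(m)}$ with an isomorphism of $A_n$ coming from a unitary still yields a $^*$-homomorphism evaluated along a continuous path in $\mathrm{sp}$; one checks this against Definition~\ref{defn:n-standardform} componentwise through each projection $q_i$. Finally, $\widehat{\psi}_n$ is unital and injective because $\psi_n$ (hence $\overline{\psi}_n$, hence $\widehat{\psi}_n$, all being unitary/inner conjugates of one another on the nose) is, by the injectivity statement in Theorem~\ref{thm:ApproxByStandardMap} and the construction in Theorem~\ref{thm:ApproxoByGoodStandardMap}.

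With $B_k\coloneqq\overline{A}_k$, $\varphi_k\coloneqq\widehat{\psi}_k$, and $n_k\coloneqq n'$ as above, the isomorphism $A\cong\varinjlim(B_k,\varphi_k)$ is exactly the chain $A\cong\varinjlim(A_n,\psi_n)\cong\varinjlim(A_n,\overline{\psi}_n)\cong\varinjlim(\overline{A}_n,\widehat{\psi}_n)$ established in the proof of Theorem~\ref{thm:Mainresult}, which used Lemma~\ref{lem:Isomorphism} (Elliott intertwining, valid since 1-NCCW complexes are separable) together with the approximate unitary equivalences coming from \eqref{eqn:ApproxByStandard} and \eqref{eqn:ApproxByGoodStandard}. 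I expect the main obstacle to be bookkeeping rather than conceptual: one must be careful that ``$n_k$-standard'' is read through each of the finitely many projections $q_i:\overline{A}_{k+1}\to C([0,1],M_{f_i'}(\mathbb{C}))$ as in the second definition of $n$-standard map, and that the piecewise-constant permutation field $C_{n+1}$ together with the constant-on-pieces $\mathsf{V}_n,Z_{n+1}$ can be accommodated by a single finite partition. Once the partitions are aligned, the verification is a direct comparison with Definition~\ref{defn:n-standardform} and Remark~\ref{rmk:D-map}.
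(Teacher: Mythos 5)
Your proposal is correct and follows exactly the route the paper intends: the paper's own proof of this corollary is simply ``this follows from the proof of Theorem~\ref{thm:Mainresult}'', and what you do is unwind that proof, taking $B_k\coloneqq\overline{A}_k$ and $\varphi_k\coloneqq\widehat{\psi}_k$ and checking (via the partition refinement for $C_{k+1}$ and the conjugation/relabelling by $\mathsf{V}_k$, $Z_{k+1}$) that these connecting maps are unital, injective and standard. The extra bookkeeping you supply about aligning partitions and reading standardness through the projections $q_i$ is exactly the content implicit in the paper's one-line proof, so there is nothing to add.
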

\begin{proof}
	This follows from the proof of Theorem \ref{thm:Mainresult}.
\end{proof}
\begin{cor}[{cf.\ \cite[Corollary 3.6]{Li+Raad23}}]
	Let $A = \varinjlim (A_n, \phi_n)$ be a unital $AH$-algebra, where the algebras $A_n$ are of the form $\bigoplus\limits_s C(X_s^{(n)}, M_{n_s})$ with the spaces $X_s^{(n)}$ as trees, and the maps $\phi_n$ are unital injective *-homomorphisms. Then $A$ has a C*-diagonal.
\end{cor}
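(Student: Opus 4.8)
The plan is to observe that the inductive system $(A_n,\phi_n)$ already meets every hypothesis of Theorem~\ref{thm:Mainresult}, so that the conclusion is immediate once we check that each $A_n$ is a one-dimensional NCCW complex with trivial $K_1$-group. Separability of $A$ is automatic: each $A_n=\bigoplus_s C(X_s^{(n)},M_{n_s})$ is a finite direct sum of algebras $C(X,M_n)$ with $X$ compact metric, hence separable, and the limit is over a countable directed set; the maps $\phi_n$ are unital and injective by hypothesis.

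First I would recall that a finite tree is, by definition, a contractible, connected, one-dimensional finite CW-complex, so by the discussion of examples in the Preliminaries (citing \cite{Robert12}) each $C(X_s^{(n)},M_{n_s})$ is a one-dimensional NCCW complex with vanishing $K_1$. Explicitly, fixing a finite CW-structure on a tree $X$ with $0$-cells indexed by $\Lambda_0$ and $1$-cells indexed by $\Lambda_1$, and identifying the closure of each edge $\lambda$ with $[0,1]$ having initial vertex $\iota(\lambda)\in\Lambda_0$ and terminal vertex $\tau(\lambda)\in\Lambda_0$, one has
\[
C(X,M_n)\;\cong\;A\Big(\bigoplus_{\mu\in\Lambda_0}M_n,\ \bigoplus_{\lambda\in\Lambda_1}M_n,\ \beta_0,\ \beta_1\Big),
\]
where $\beta_0\big((a_\mu)_{\mu}\big)=(a_{\iota(\lambda)})_{\lambda}$ and $\beta_1\big((a_\mu)_{\mu}\big)=(a_{\tau(\lambda)})_{\lambda}$; both are unital $^*$-homomorphisms, so $C(X,M_n)\in\text{1-NCCW}_1$. (When $X$ is a single point this degenerates to $M_n\in\text{1-NCCW}_1$.)

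Next I would note that $\text{1-NCCW}_1$ is closed under finite direct sums, via $\bigoplus_t A(E^{(t)},F^{(t)},\beta_0^{(t)},\beta_1^{(t)})=A\big(\bigoplus_t E^{(t)},\bigoplus_t F^{(t)},\bigoplus_t\beta_0^{(t)},\bigoplus_t\beta_1^{(t)}\big)$, which is unital precisely when each summand is, and that $K_1$ is additive over finite direct sums; hence $A_n\in\text{1-NCCW}_1$ with $K_1(A_n)=\bigoplus_s K_1\big(C(X_s^{(n)},M_{n_s})\big)=0$. With all hypotheses in place, Theorem~\ref{thm:Mainresult} applies directly and produces a $C^*$-diagonal of $A$.

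I do not expect a substantive obstacle here: the entire content of the corollary is carried by Theorem~\ref{thm:Mainresult}, and the only thing to verify — the identification of $C(X,M_n)$ over a tree with a suitable pullback $A(E,F,\beta_0,\beta_1)$ and the behaviour of $K_1$ — is routine. The statement merely records that AH-algebras assembled from matrix algebras over trees form a special case of the inductive limits treated above, thereby recovering the relevant instance of \cite[Corollary~3.6]{Li+Raad23}.
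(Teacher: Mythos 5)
Your proposal is correct and follows essentially the same route as the paper: verify that each $A_n=\bigoplus_s C(X_s^{(n)},M_{n_s})$ with the $X_s^{(n)}$ trees lies in $\text{1-NCCW}_1$ with trivial $K_1$, and then invoke Theorem~\ref{thm:Mainresult}. The paper states this verification in one line, whereas you spell out the pullback presentation over vertices and edges and the closure under finite direct sums, which is a harmless (and correct) elaboration of the same argument.
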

\begin{proof}
	Each $A_n$ is a $1$-dimensional NCCW complex with trivial $K_1$-group. Therefore, Theorem \ref{thm:Mainresult} applies.
\end{proof}
\bibliographystyle{plain}
\bibliography{mybib}

\begin{thebibliography}{10}

\bibitem{AEL20}
Qingnan An, George~A. Elliott, Zhichao Liu, and Yuanhang Zhang.
\newblock Stable homotopy, 1-dimensional {NCCW} complexes, and property ({H}).
\newblock {\em Math. Scand.}, 126(3):617--638, 2020.

\bibitem{AKS11}
Astrid an~Huef, Alex Kumjian, and Aidan Sims.
\newblock A {D}ixmier-{D}ouady theorem for {F}ell algebras.
\newblock {\em J. Funct. Anal.}, 260(5):1543--1581, 2011.

\bibitem{APS11}
Ramon Antoine, Francesc Perera, and Luis Santiago.
\newblock Pullbacks, {$C(X)$}-algebras, and their {C}untz semigroup.
\newblock {\em J. Funct. Anal.}, 260(10):2844--2880, 2011.

\bibitem{BS21}
Sel\c{c}uk Barlak and Sven Raum.
\newblock Cartan subalgebras in dimension drop algebras.
\newblock {\em J. Inst. Math. Jussieu}, 20(3):725--755, 2021.

\bibitem{Bhatia07}
Rajendra Bhatia.
\newblock {\em Perturbation bounds for matrix eigenvalues}, volume~53 of {\em
  Classics in Applied Mathematics}.
\newblock Society for Industrial and Applied Mathematics (SIAM), Philadelphia,
  PA, 2007.
\newblock Reprint of the 1987 original.

\bibitem{Eilers98}
S\o{}ren Eilers, Terry~A. Loring, and Gert~K. Pedersen.
\newblock Stability of anticommutation relations: an application of
  noncommutative {CW} complexes.
\newblock {\em J. Reine Angew. Math.}, 499:101--143, 1998.

\bibitem{Elliott93}
George~A. Elliott.
\newblock On the classification of {$C^*$}-algebras of real rank zero.
\newblock {\em J. Reine Angew. Math.}, 443:179--219, 1993.

\bibitem{FM77}
Jacob Feldman and Calvin~C. Moore.
\newblock Ergodic equivalence relations, cohomology, and von {N}eumann
  algebras. {I}.
\newblock {\em Trans. Amer. Math. Soc.}, 234(2):289--324, 1977.

\bibitem{Jiang-Su99}
Xinhui Jiang and Hongbing Su.
\newblock On a simple unital projectionless {$C^*$}-algebra.
\newblock {\em Amer. J. Math.}, 121(2):359--413, 1999.

\bibitem{kum86}
Alexander Kumjian.
\newblock On {$C^\ast$}-diagonals.
\newblock {\em Canad. J. Math.}, 38(4):969--1008, 1986.

\bibitem{Li97}
Liangqing Li.
\newblock Classification of simple {$C^*$}-algebras: inductive limits of matrix
  algebras over trees.
\newblock {\em Mem. Amer. Math. Soc.}, 127(605):vii+123, 1997.

\bibitem{Li20}
Xin Li.
\newblock Every classifiable simple {$\rm C^*$}-algebra has a {C}artan
  subalgebra.
\newblock {\em Invent. Math.}, 219(2):653--699, 2020.

\bibitem{Li+Raad23}
Xin Li and Ali~I. Raad.
\newblock Constructing {$\rm C^*$}-diagonals in {AH}-algebras.
\newblock {\em Trans. Amer. Math. Soc.}, 376(12):8857--8875, 2023.

\bibitem{Liu19b}
Zhichao Liu.
\newblock A decomposition theorem for real rank zero inductive limits of
  1-dimensional non-commutative {CW} complexes.
\newblock {\em J. Topol. Anal.}, 11(1):181--204, 2019.

\bibitem{Liu19}
Zhichao Liu.
\newblock Injectivity of the connecting homomorphisms in inductive limits of
  {E}lliott-{T}homsen algebras.
\newblock {\em Canad. Math. Bull.}, 62(1):131--148, 2019.

\bibitem{Ren08}
Jean Renault.
\newblock Cartan subalgebras in {$C^*$}-algebras.
\newblock {\em Irish Math. Soc. Bull.}, (61):29--63, 2008.

\bibitem{Robert12}
Leonel Robert.
\newblock Classification of inductive limits of 1-dimensional {NCCW} complexes.
\newblock {\em Adv. Math.}, 231(5):2802--2836, 2012.

\bibitem{RS02}
M.~R{\o}rdam and E.~St{\o}rmer.
\newblock {\em Classification of nuclear {$C^*$}-algebras. {E}ntropy in
  operator algebras}, volume 126 of {\em Encyclopaedia of Mathematical
  Sciences}.
\newblock Springer-Verlag, Berlin, 2002.
\newblock Operator Algebras and Non-commutative Geometry, 7.

\bibitem{Santiago10}
Luis Santiago.
\newblock A classification of inductive limits of splitting interval algebras.
\newblock {\em arXiv preprint arXiv:1011.6559}, 2010.

\bibitem{Thomsen87}
Klaus Thomsen.
\newblock On the embedding and diagonalization of matrices over {$C(X)$}.
\newblock {\em Math. Scand.}, 60(2):219--228, 1987.

\end{thebibliography}
\end{document}